\documentclass[10pt, leqno]{amsart}

\usepackage[utf8]{inputenc}

\usepackage[square, numbers, comma]{natbib} 

\usepackage[usenames,dvipsnames,svgnames,table]{xcolor}
\usepackage{amsmath,amsfonts,amsthm,amssymb,amssymb,esint,verbatim,tabularx,graphicx,stackrel}
\usepackage{enumerate}
\usepackage{fancyhdr}
\usepackage{epic}
\usepackage{pgf,tikz}
\usepackage{bbm}
\usetikzlibrary{arrows}
\usepackage{microtype}

\usepackage{color}

\usepackage{pdfpages}
\usepackage{mathtools}

\numberwithin{equation}{section}

\usepackage[hypertexnames=false]{hyperref}
\hypersetup{urlcolor=blue, colorlinks=true} 
\usepackage[inner=3cm,outer=3cm,bottom=4cm,top=4cm]{geometry}

%

\newtheorem{theorem}{Theorem}[section]
\newtheorem{proposition}[theorem]{Proposition}

\newtheorem{lemma}[theorem]{Lemma}
\newtheorem{corollary}[theorem]{Corollary}



\theoremstyle{definition}
\newtheorem{definition}[theorem]{Definition}

\theoremstyle{remark}
\newtheorem{remark}[theorem]{Remark}
\theoremstyle{remark}

\newcommand{\sgn}{\textup{sgn}}

\newcommand{\R}{\mathbb{R}}

\newcommand{\N}{\mathbb{N}}
\renewcommand{\L}{\mathcal{L}}
\newcommand{\B}{\mathcal{B}}

\renewcommand{\k}{\kappa}

\newcommand{\bu}{\overline{u}^c}

\renewcommand{\a}{\alpha}
\newcommand{\e}{\varepsilon}

\newcommand{\m}{\mu}
\newcommand{\cha}{\chi_\Omega}
\newcommand{\po}{\partial\Omega}

\newcommand{\dell}{\partial}
\newcommand{\norm}[2]{\|#1\|_{{#2}}}
\newcommand{\loc}{\mathrm{loc}}

\newcommand{\dd}{\,\mathrm{d}}

\DeclareMathOperator{\supp}{supp}

\DeclareMathOperator*{\esssup}{ess\,sup}
\DeclareMathOperator*{\essinf}{ess\,inf}


\newcommand{\ol}{\overline}


\begin{document}

\title[Nonlocal degenerate parabolic hyperbolic equations on bounded domains. Existence]{
Nonlocal degenerate parabolic hyperbolic equations on bounded domains. Part II: Existence}





\author[J.~Endal]{J{\o}rgen Endal}
\address[J. Endal]{Department of Mathematical Sciences\\
Norwegian University of Science and Technology (NTNU)\\
N-7491 Trondheim, Norway}
\email[]{jorgen.endal\@@{}ntnu.no}
\urladdr{http://folk.ntnu.no/jorgeen/}

\author[E.~R.~Jakobsen]{Espen R. Jakobsen}
\address[E.~R.~Jakobsen]{Department of Mathematical Sciences\\
Norwegian University of Science and Technology (NTNU)\\
N-7491 Trondheim, Norway}
\email[]{espen.jakobsen\@@{}ntnu.no}
\urladdr{http://folk.ntnu.no/erj/}

\author[O.~M{\ae}hlen]{Ola M{\ae}hlen}
\address[O.~M{\ae}hlen]{Mathematical Institute of Orsay\\
Paris-Saclay University\\
91400 Orsay, France}
\email[]{ola.maehlen\@@{}universite-paris-saclay.fr}

\keywords{Initial-boundary value problems, scalar conservation laws, nonlocal nonlinear diffusion, degenerate parabolic equations, mixed parabolic-hyperbolic equations, entropy solutions, existence, vanishing viscosity method}

\subjclass[2020]{
35A01,  	
35B35, 	
35K20,  	
35M13, 
35K65,   
35L04, 
35L65, 
35R09,   
35R11.   
}

\begin{abstract}
We study well-posedness of degenerate mixed-type parabolic-hyperbolic equations
$$
\partial_tu+\textup{div}\big(f(u)\big)=\L[b(u)]
$$
on bounded domains with 
general Dirichlet boundary/exterior conditions. 
The nonlocal diffusion operator $\L$ is a symmetric Lévy operator (e.g. fractional Laplacians) and $b$ is nondecreasing and allowed to have degenerate regions ($b'=0$). In [N. Alibaud, J. Endal, E. R. Jakobsen, and O. Mæhlen. Nonlocal degenerate parabolic hyperbolic
equations on bounded domains. \emph{Ann. Inst. H. Poincare Anal. Non Lineaire}, 2025. Published
online first, DOI 10.4171/AIHPC/153], we introduced an entropy solution formulation for the problem and showed uniqueness of bounded entropy solutions under general assumptions. In this paper we complete the program by proving existence of such solutions.
%
%
%
 Starting from known results for scalar conservations laws, existence is proved first for bounded/zero order operators $\L$ by a fixed point argument, and then extended in steps to more general operators via approximations of $\L$.
Lack of strong compactness of approximate solutions $u_n$ is overcome through nonlinear weak-$\star$ compactness and entropy-process solutions. Key ingredients are stability results for our formulation and \textit{strong} compactness of the term $b(u)$, both with respect to variations in $\L$. 
Strong compactness follows from energy estimates and novel arguments for transferring weak regularity from $\partial_t u_n$ to $\partial_t b(u_n)$. 
Our work can be seen as both extending nonlocal theories from the whole space to domains and local theories on domains to the nonlocal case. Unlike local theories our formulation does not assume energy estimates. They are now a consequence of the formulation, but 
as opposed to previous nonlocal theories, they play an essential role in our arguments. Several results of independent interest are established, including a characterization of the Lévy operators $\L$ for which the corresponding energy/Sobolev-space compactly embeds into $L^2$.
\end{abstract}

\maketitle

\tableofcontents


\section{Introduction}
In this paper we prove existence of entropy solutions on bounded domains $\Omega\subset \R^d$ for mixed type hyperbolic-parabolic equations with nonlinear and nonlocal diffusion:
%
\begin{equation}\label{E}
\begin{cases} 
\dell_tu+\textup{div}\big(f(u)\big)=\L[b(u)] \qquad&\text{in}\qquad Q\coloneqq(0,T)\times\Omega,\\
u=u^c \qquad&\text{in}\qquad Q^c\coloneqq(0,T)\times\Omega^c,\\
u(0,\cdot)=u_0 \qquad&\text{on}\qquad \Omega,\\
\end{cases}
\end{equation}
where $T>0$, the initial/boundary (exterior) data $u_0,u^c$ are bounded, the ``fluxes" $f,b$ are (at least) locally Lipschitz and $b$ is nondecreasing and possibly degenerate,\footnote{Both $f$ and $b$ may be strongly degenerate, that is, $f'$ and $b'$ can be $0$ on whole intervals.} 
``$\textup{div}$'' is the $x$-divergence, and
the nonlocal diffusion operator $\L$, is a symmetric Lévy operator: 
Applied to $\phi \in C_c^\infty(\R^d)$ it is defined by singular integral
\begin{equation}\label{deflevy}
  \L [\phi](x):=P.V.\int_{|z|>0} \big(\phi(x+z)-\phi(x)\big) \dd \mu(z) \coloneqq\lim_{\epsilon\to0}\int_{|z|>\epsilon} \big(\phi(x+z)-\phi(x)\big) \dd\m(z),
\end{equation}
where the (Lévy) measure $\mu$ is a nonnegative and symmetric Borel measure on $\R^d$ satisfying $\mu(\{0\})=0$ and $\int_{\R^d}|z|^2\wedge 1 \dd \mu(z)<\infty$. This class of anomalous diffusion operators coincides with the generators of the symmetric pure-jump Lévy processes \cite{Ber96, Sat99, Sch03, App09}, including $\alpha$-stable, tempered, relativistic,
and compound Poisson processes. The corresponding generators include the fractional Laplacians \cite{App09,Sat99}
$$\L=-(-\Delta)^{\frac{\alpha}{2}},\quad \alpha\in(0,2),\qquad \text{where}\qquad \dd \mu_\alpha(z)\coloneqq c_{d,\a}|z|^{-(d+\alpha)}\dd z,$$ 
anisotropic operators $-\sum_{i=1}^N(-\dell_{x_ix_i}^2)^{\frac{\alpha_i}{2}}$, $\alpha_i\in(0,2)$, relativistic Schr\"odinger 
operators, 
degenerate and $0$-order operators, and numerical discretizations \cite{DTEnJa18b,DTEnJa19} of these. Since the operators $\L$ are naturally defined on the whole space, they require Dirichlet data on $\Omega^c$ 
for \eqref{E} to be well-defined. In addition, inflow conditions are needed in hyperbolic regions.

 Equation \eqref{E} is a possibly degenerate nonlinear convection-diffusion equation with applications in sciences and engineering. The solution structure is very rich and special cases include scalar conservation laws ($b=0$) \cite{MaNeRoRu96,HoRi02,Daf10}, fractional conservation laws ($b(u)=u$) \cite{BiFuWo98,DrIm06,Ali07}, nonlinear fractional diffusion equations ($f=0$), degenerate porous medium type equations \cite{DPQuRoVa11,DPQuRoVa12,DPQuRoVa14,DPQuRoVa17}, strongly degenerate Stefan type problems \cite{DTEnVa20a,DTEnVa20b}, and problems of mixed hyperbolic-parabolic type \cite{CiJa11,AlCiJa14,EnJa14}. In nondegenerate regions (where $b'(u)\neq 0$), solutions are smooth and boundary data continuously attained when $\alpha>1$. In other regions, or when $\alpha<1$, solutions may develop shocks in finite time, boundary conditions may be lost at outflow, and non-uniqueness issues arise. Kru\v{z}kov \cite{Kru70} developed an entropy solution theory and obtained well-posedness results for scalar conservation laws ($b=0$), 
a theory that was extended to problems posed on bounded domains by Bardos, Le Roux, and Nedelec \cite{BaLRNe79} and Otto \cite{Ott96,MaNeRoRu96}, see also \cite{Vov02,EyGaHe00,ChFr99,Vas01,Pan05,KwVa07}. Much later Carrillo \cite{Car99} (cf. also \cite{KaRi01,ChPe03}) came up with an entropy solution theory for local (degenerate) convection-diffusion equations ($\L=\Delta$). This first result for homogeneous Dirichlet conditions was then extended by Mascia, Poretta, and Terracina \cite{MaPoTe02} and Michel and Vovelle \cite{MiVo03}. See \cite{Val05,Kwo09, FrLi17,RoGa01, AnGa16,Kob06, Kob07,KoOh12, LiWa12, Wan16} for further developments in this direction.

Inspired by nonlocal viscosity solution theories (see e.g. \cite{JaKa06,BaIm08}), Alibaud \cite{Ali07} (linear diffusions) and Cifani and Jakobsen \cite{CiJa11} (in full generality) introduced entropy solution theories and well-posedness results for the nonlocal problem \eqref{E} posed in the whole space $\Omega=\R^d$. A key idea here, with no local analogue, is  to split the nonlocal operator in a singular and a nonsingular part and taking an additional limit.
We refer to \cite{KaUl11,AlCiJa12,AlCiJa14,CiJa14,EnJa14,IgSt18,BKV20,AAO20} and references therein for more results on Cauchy problems and further developments. To pose nonlocal problems on a bounded domain, 
the nonlocal operator must be restricted to this domain. There are many ways to do this, 
including imposing boundary conditions on the complement 
as in \eqref{E}. This is the most popular approach in PDE theory with a large literature that includes contributions from among others Caffarelli and Silvestre \cite{CaSi07,Caf12,Vaz12,R-Ot16,BuVa16}. In probability theory it corresponds to stopping or killing an underlying process upon exiting $\Omega$ \cite{Dy65}.
Other ways to restrict the operator include censoring or using spectral theory where data is only required on the boundary $\partial \Omega$ \cite{BoBuCh03,Gr18}. But the resulting operators $\L$ are no longer translation invariant, and their integral representations depend explicitly on $\Omega$. We refer to \cite{Gr18,BoFiVa18a, BoFiVa18b} for comparison of the three approaches in linear and nonlinear settings.


The literature on the nonlocal boundary value problem \eqref{E} was for some time limited to purely parabolic equations ($f\equiv0$) \cite{BoVa15, BoVa15b, BoFiVa18a, BoFiVa18b} or linear and non-degenerate diffusions ($b=\textup{Id}$) 
\cite{Bra16, Kan18, Kan20}. 
Recently, Huaroto and Neves \cite{HuNe22} showed existence (but not uniqueness)
of $L^\infty$ entropy solutions of a problem similar to \eqref{E}, but with local boundary conditions and $\L$ being the censored (or regional) fractional Laplacian of order $\alpha\in(1,2)$. This operator does not belong to the class of operators considered in this paper, and the arguments differ a lot, e.g. we work directly on the entropy formulation instead of using smooth approximations. 
In our recent companion paper \cite{AlEnJaMa23},
we studied problem \eqref{E} and gave a semi-Kru\v{z}kov type of entropy formulation that correctly accounts for exterior data and inflow boundary conditions. Our main results were uniqueness and a priori estimates for entropy solutions, results that will be summarized in the next section and play a central role in this paper. We also characterized the sense in which the boundary conditions hold using both semi-Kru\v{z}kov and Otto type formulations. Since the problem is nonlocal, a new type of boundary integrability result was needed.
%
%

 In this paper we focus on the existence of entropy solutions of \eqref{E},  complementing the uniqueness result of \cite{AlEnJaMa23}.  We show that existence holds  under mild additional assumptions on the operators $\L$, see Theorem \ref{thm: mainResultExistence}.  We also prove new compactness and stability results for \eqref{E} with respect to variations of $\L$, and obtain, as a consequence, a convergence result for the vanishing fractional viscosity method on domains.  Below, we will summarize our existence argument in four steps. Before that, we point out some difficulties on bounded domains not present in the whole space. 

When \eqref{E} is posed in $(0,T)\times \R^d$, the existence argument relies on
$L^1$-contractions \cite{Ali07, CiJa11, EnJa14, DTEnJa19}, and while the precise argument varies, the essential point is this: By translation invariance of the PDE, $L^1$-contractions imply translation estimates (first in space, then in time by Kruzkov's lemma) for the solutions; these estimates depend only on the initial data $u_0$ and not $f,b,\L$. Consequently, the solutions arising from varying $f,b,\L$ (while keeping $u_0$ fixed) constitute a precompact set in $C([0,T]; L^1_{\loc}(\R^d))$. This type of strong compactness result facilitates approximation arguments, so that existence in difficult regimes ($\L$ unbounded/singular) follows form that of the easy ones ($\L$ bounded/zero order).

On our domain $(0,T)\times \Omega$, \eqref{E} is no longer translation invariant. Another problem, is the lack of an $L^1$-contraction that incorporates the boundary/exterior data; this is actually lacking for local (degenerate) diffusion as well where, to the best of our knowledge, the only estimates akin to such $L^1$-contractions are the time-averaged results of \cite{Kob06,Kob07}. Despite these difficulties, our maximum principle (Lemma \ref{lem: maximumsPrinciple}) at least ensures that the entropy solutions of \eqref{E} arising from fixed data, form a weak-$\star$ precompact set in $L^\infty(Q)$. An alternative approach to the existence problem, based on nonlinear weak-$\star$ convergence and measure valued solutions, has been developed for hyperbolic conservation laws \cite{EyGaHe00}\footnote{The context was finite volume schemes on irregular grids which satisfy no known translation estimate in $L^1$.} and later extended to mixed hyperbolic-parabolic problems \cite{MiVo03} (see also \cite[Section 6.9]{EyGaHe00}). In \cite{MiVo03} additional energy estimates are needed to show strong $L^2$-compactness of $b(u_n)$ which is used to conclude that the limit is a solution (stability). 

We adapt the entropy-process solution concept and nonlinear weak-$\star$ approach of \cite{MiVo03} to our nonlocal setting. The details can be summarized as follows.\vspace{4pt}

\noindent\textit{The existence argument in four steps:}
\begin{enumerate}[{\rm (i)}]
    \item \textit{Existence for bounded $\L$ $\mathrm{(}$finite $\mu\mathrm{)}$}: This is Theorem \ref{thm: existenceForBoundedL}, the first result of Section \ref{sec: existenceOfEntropySolutions}. It is obtained from combining the existence result in \cite{PoVo03} with a fixed point argument. 
    \item \textit{Stability of solutions under $\L$-perturbations}: This is Proposition \ref{prop: stabilityOfEntropySolutionsWithRespectToPerturbationsInL}, the main result of Section \ref{sec: stabiltyWithRespectToVariationsInL}. It says that if Lévy operators converge 
     in an appropriate way,  $\L_n\to \L$, then the corresponding entropy solutions converge as well $u_n\to u$. The result is based on nonlinear weak-$\star$ convergence and entropy-process solutions, and it is the key to generalize the existence result to unbounded $\L$. However, the result requires that the diffusive fluxes $(b(u_n))_{n\in\N}$ are precompact in $L^2(Q)$.
    \item \textit{Regularity estimates for $b(u)$ from finite energy}: This is the subject of Section \ref{sec: precompactnessOfB(u_n)}, whose results are summarized in Corollary \ref{cor: jointCorecivityImpliesPrecompactnessOfb(u_n)}. The energy estimate (Proposition \ref{prop: finiteEnergy}) induces some translation-continuity on $b(u)$. This is used to derive a condition on $(\L_n)_{n\in\N}$ which ensures that $(b(u_n))_{n\in\N}\subset L^2(Q)$ is precompact, as needed in the previous step.
    \item \textit{Existence for unbounded $\L$ $\mathrm{(}$nonfinite $\mu\mathrm{)}$}: This is Theorem \ref{thm: existenceForUnboundedL}, the second result of Section \ref{sec: existenceOfEntropySolutions}. Under the assumption \eqref{muassumption2}, one can write $\L$ as the limit of a sequences of bounded operators $(\L_n)_{n\in\N}$ which, crucially, satisfies the condition of Corollary \ref{cor: jointCorecivityImpliesPrecompactnessOfb(u_n)}. By the stability result, and existence of entropy solutions in the bounded case, the theorem follows. 
\end{enumerate}
\begin{remark}
    Much like \cite{MiVo03}, our approximation argument also requires strong $L^2$-convergence of $b(u_n)$, but for a different reason and which is tied to the proof of Proposition \ref{prop: stabilityOfEntropySolutionsWithRespectToPerturbationsInL}; see Remark \ref{rem: whyWeNeedStrongConvergenceOfBUN} for details.
\end{remark}

 Finally, we mention  that our work can be seen as both extending
nonlocal theories from the whole space to domains and local theories on domains to the nonlocal
case. Unlike local theories our formulation does not assume energy estimates. They are now a
consequence of the formulation,  but  as opposed to previous nonlocal theories, they play an essential role in our arguments.

\subsubsection*{Outline of paper}Section \ref{sec:AssumptionsConceptMain} is reserved for assumptions, concept of solution, a priori and uniqueness results from \cite{AlEnJaMa23}, and the main result -- existence of solutions. We also give a result on convergence of the vanishing viscosity method.  In Section  \ref{sec: stabiltyWithRespectToVariationsInL} we introduce the nonlinear weak-$\star$ convergence, entropy-process solutions, and we prove stability of entropy solutions under perturbations of $\L$. In Section \ref{sec: precompactnessOfB(u_n)}, we study what translation regularity the energy estimate Proposition \ref{prop: finiteEnergy} induces on $b(u)$, culminating in Corollary \ref{cor: jointCorecivityImpliesPrecompactnessOfb(u_n)}. The main result on existence of solutions is proved in Section \ref{sec: existenceOfEntropySolutions}, along with the vanishing viscosity result. Finally, in the two appendices we show technical results needed in the proofs, including a characterization of the Lévy operators whose energy space $H^\L_0(\Omega)$ compactly embeds in $L^2(\Omega)$.


\section{Assumptions, concept of solution, auxiliary and main results.}
\label{sec:AssumptionsConceptMain}
In this section we state the notation and assumptions, present the (entropy) solution concept we will use, recall a priori and uniqueness results from \cite{AlEnJaMa23}, and give the main result on existence of solutions of \eqref{E}.  We also give a convergence result for the vanishing fractional viscosity method. 

\subsubsection*{Notation}
Let
$\phi \vee \psi\coloneqq \max\{\phi,\psi\}$, $\phi\wedge \psi \coloneqq \min\{\phi,\psi\}$, and
$\sgn^\pm(a)=\pm1$ if $\pm a >0$ and zero otherwise. In addition to the open and bounded set $\Omega\subset \R^d$, we will be working with the sets 
$$
Q=(0,T)\times\Omega,\quad \Gamma=(0,T)\times\dell\Omega, \quad Q^c=(0,T)\times\Omega^c, \quad M=(0,T)\times\R^d. 
$$
For a set $\mathcal{S}\subset \R^d$ we define the signed distance by $d_\mathcal{S}(x)=\mathrm{dist}(x,\partial S)$ if $x\in \R^d\setminus \mathcal{S}$ and $d_\mathcal{S}(x)=-\mathrm{dist}(x,\partial S)$ otherwise. For $\e>0$, the $\pm\e$-neighborhoods of $\mathcal{S}$ and $\po$ are given by
$$
\mathcal{S}_{\pm\e}\coloneqq\{x\in 
 \R^d  \,:\, d_{\mathcal{S}}(x)\leq \pm\e\}\qquad \text{and}\qquad \po_{\pm\e}:= (\po)_{\pm\e}.
$$
We will write $|\Omega|$ to mean the $d$-dimensional Lebesgue measure of $\Omega$, and $|Q|\coloneqq T|\Omega|$. The $(d-1)$-dimensional Hausdorff measure on $\po$ is denoted by $\dd \sigma(x)$. 

 The Lévy operator $\L$ is as defined in \eqref{deflevy}, and we define its corresponding bilinear operator by
\begin{align}\label{def:blform}
   \B[\phi,\psi](x) \coloneqq\lim_{\epsilon\to0}\frac{1}{2}\int_{|z|>\epsilon}  \big(\phi(x+z)-\phi(x)\big)\big(\psi(x+z)-\psi(x)\big)\dd\m(z),  
\end{align}
where the limit will typically be taken in $L^1(\R^d)$ or $L^1_{\loc}(\R^d)$. 
We denote truncated operators by
\begin{equation}\label{eq:DifferentTruncatedOperators}
    \L^{\geq r}, \qquad\L^{<r},\qquad\L^{r'> \cdots \geq r},\qquad\qquad
    \B^{\geq r}, \qquad\B^{<r},\qquad\B^{r'> \cdots \geq r},
\end{equation}
where the domains of integration are restricted to $\{|z|\geq r\}$, $\{|z|<r\}$, and $\{r'> |z|\geq r\}$; observe that $ \L^{\geq r}, \L^{<r},\L^{r'> \cdots \geq r}$ are themselves Lévy operators. 

We identify $L^2(\Omega)$ as a subspace of $L^2(\R^d)$ through zero extensions, and we further define the subspace $H^{\L}_0(\Omega)\subseteq L^2(\Omega)$ by
\begin{align}\label{eq: definitionOfLevySpaceNorm}
    \phi\in H^\L_0(\Omega) \quad \Longleftrightarrow\quad \norm{\phi}{H^{\L}_0(\Omega)}^2\coloneqq\norm{\phi}{L^2(\Omega)}^2 + \int_{\R^d}\B[\phi,\phi]\dd x<\infty.
\end{align}
A characterization of $H^\L_0(\Omega)$ on the Fourier side is given in Lemma \ref{lem: fourierCharacterizationsOfLAndB}.

The Lipschitz constant of a function $g$ on a 
set $K$ is denoted $L_{g,K}$ or simply $L_g$ if the set is clear from the context.

\subsubsection*{Assumptions} In this paper we will use the following assumptions:
\begin{align}
&\textup{$\Omega\subset \R^d$ is open, bounded with $C^2$-boundary $\dell\Omega$, and outward pointing normal $\hat{n}$}.
\tag{$\textup{A}_\Omega$}
\label{Omegaassumption}\\
&f=(f_1,f_2,\ldots,f_d)\in W_{\textup{loc}}^{1,\infty}(\mathbb{R};\mathbb{R}^d).
\tag{$\textup{A}_f$}
\label{fassumption}\\
&\text{$b\in W_{\textup{loc}}^{1,\infty}(\R;\R)$ is non-decreasing, and the weak derivative $ b'\in{ B V_{\textup{loc}}(\R)}$.}
\tag{$\textup{A}_b$}
\label{bassumption}\\
\tag{$\textup{A}_{u^c}$}&u^c\in (C^2\cap L^\infty)(Q^c)\textup{ and has an extension $\overline{u}^c\in (C^2\cap L^\infty)([0,T]\times\R^d)$}.
\label{u^cassumption}\\
&u_0\in L^{\infty}(\Omega).
\label{u_0assumption}
\tag{$\textup{A}_{u_0}$}\\
&\textup{$\mu\geq 0$ is a Borel measure on $\R^d$, symmetric about zero $\mu(\cdot)=\mu(-\cdot)$,}\label{muassumption}
\tag{$\textup{A}_{\mu}$}\\ &\textup{no point-mass at zero $\mu(\{0\})=0$, and satisfies $\textstyle\int_{\R^d}\big(|z|^2\wedge 1\big)\dd \mu(z)<\infty$.}\notag
\\
&\textup{$\mu$ is such that the multiplier  $m(\xi)\coloneqq\textstyle\int_{\R^d}\big(1-\cos(\xi\cdot z)\big)\dd\mu(z)\to\infty$ \ as \ $|\xi|\to\infty$.}
\label{muassumption2}
\tag{$\textup{A}_{\mu}'$}
\end{align}

\begin{remark}\label{assumptionremark}\leavevmode
\begin{enumerate}[{\rm (a)}]
\item  By a symmetric Lévy operator $\L$, we will always mean an operator of the form \eqref{deflevy} whose (Lévy) measure $\mu$ satisfies \eqref{muassumption}. Note also that while $\mu$ need not be locally bounded at zero, it is $\sigma$-finite, and so the upcoming product measures will be unambiguous.
\item In \eqref{fassumption} and \eqref{bassumption}, we can assume without loss of generality that $f(0)=0$ and $b(0)=0$ (add constants to $f$ and $b$), and that $f$ and $b$ are globally Lipschitz and $b'$ has bounded total variation (solutions are uniformly bounded by Lemma \ref{lem: maximumsPrinciple}). 
\item In \eqref{bassumption} 
the condition $b'\in  B V_{\textup{loc}}(\R)$ implies $b\in W^{1,\infty}_{\textup{loc}}(\R)$, the standard assumption for the Cauchy problem. Our stronger assumption still allows for classical power type and strongly degenerate nonlinearities  $b(r)=r^m$ for  $m>1$ (porous medium) and  $b(r)=\max\{r-L,0\}$ 
(one-phase Stefan). 
\item Assumption \eqref{muassumption2} is a necessary and sufficient condition for a family of functions bounded in the energy space $H_0^{\L}$ to be compact in $L^2$, see Proposition \ref{prop: coerciveSymbolEqualsCompactEmbeddingOfLevySpaceInL2}. It is needed in the existence proof in Section \ref{sec: existenceOfEntropySolutions} where it compensates for the lack of $L^1$-contraction results. We refer to $m$ as the (Fourier) multiplier of $\L$, though, strictly speaking, it is the multiplier of $-\L$ (Lemma \ref{lem: fourierCharacterizationsOfLAndB}). 
\end{enumerate}
\end{remark}
\subsubsection*{Entropy solutions} For the definition of entropy solutions we introduce the semi-Kru\v{z}kov entropy-entropy flux pairs
\begin{equation*}
(u-k)^\pm,\qquad
F^{\pm}(u,k)\coloneqq\sgn^\pm(u-k)(f(u)-f(k))
\qquad  \text{for}\qquad u,k\in\mathbb{R},
\end{equation*}
where $(\cdot)^+\coloneqq\max\{\cdot,0\}$ and $(\cdot)^-\coloneqq(-\cdot)^+$, and the
usual splitting of the nonlocal operator,
\begin{equation*}
\L[\phi](x)=\L^{<r}[\phi](x)+\L^{\geq r}[\phi](x) \qquad\text{for}\qquad \phi\in C_c^{\infty}(\R^d),\ r>0,\ x\in \mathbb{R}^d,
\end{equation*}
where $\L^{<r}$ and $\L^{\geq r}$ are defined in \eqref{eq:DifferentTruncatedOperators}.
Our definition is then:

\begin{definition}[Entropy solution]\label{def: entropySolutionVovelleMethod}
A function $u\in L^\infty(M)$ is an \emph{entropy solution} of \eqref{E}
if:
\begin{enumerate}[(a)]
\item\label{def: entropyInequalityVovelleMethod-1} \textup{(Entropy inequalities in $\overline Q$)} For all $r>0$, and all $k\in\R$ and $0\leq \varphi\in C^\infty_c([0,T)\times\R^d)$ satisfying 
\begin{align}\label{eq: admissibilityConditionOnConstantAndTestfunction}
(b(u^c)-b(k))^\pm\varphi = 0\qquad \text{in $Q^c$},
\end{align}
the following inequality holds
\begin{equation}\label{eq: entropyInequalityVovelleMethod}
\begin{split}
&\,-\int_Q \Big( (u-k)^\pm\dell_t\varphi + F^\pm(u,k)\cdot\nabla\varphi \Big)\dd x\dd t\\
&\,-\int_Q\L^{\geq r}[b(u)]\sgn^\pm(u-k)\varphi\dd x\dd t - \int_M(b(u)-b(k))^\pm\L^{< r}[\varphi]\dd x\dd t\\
\leq&\,\int_\Omega (u_0-k)^\pm\varphi(0,\cdot) \dd x + L_f\int_\Gamma (\overline{u}^{c}-k)^\pm\varphi \dd\sigma(x)\dd t.
\end{split}
\end{equation}
\item\label{def: entropyInequalityVovelleMethod-2} \textup{(Data in $Q^c$)} $u=u^c$ a.e.~in $Q^c$.
\end{enumerate}
\end{definition}
This definition is an extension of both \cite[Definition 2.1]{CiJa11} (see also \cite{Ali07}) for nonlocal problems in the whole space and \cite[Definition 2.1]{MiVo03} (see also \cite{MaPoTe02}) for local problems on bounded domains. In the hyperbolic case ($b'\equiv0$), it is equivalent \cite{Vov02} to the original definition of Otto \cite{Ott96}. By \eqref{Omegaassumption}--\eqref{muassumption}, all the integrals in \eqref{eq: entropyInequalityVovelleMethod} are well-defined.
\begin{remark}\label{rem: defentsolnremark}\leavevmode
\begin{enumerate}[(a)]
\item The condition $(b(u^c)-b(k))^+\varphi = 0$ is weaker than $|b(u^c)-b(k)|\varphi = 0$. 
When $b(u^c)< b(k)$, the first condition holds for any $\varphi$ while the second then implies that $\varphi=0$.
This gives a hint to why standard Kru\v{z}kov entropy-entropy flux pairs are too restrictive in this setting, see \cite{Vov02} for counterexamples to uniqueness in the hyperbolic case when $b=0$.
\item \label{rem: defentsolnremark(b)}  The truncated Lévy operator $\L^{\geq r}$ is bounded on $L^p(\R^d)$ for all $p\in[1,\infty]$. It can be written $\mathcal{L}^{\geq r}[b(u)]=\mu_r\ast b(u)-\mu_r(\R^d) b(u)$ where $\mu_r$ is the \textit{finite} Borel measure $\mu_r(E)\coloneqq \mu(E\setminus \{|z|<r\})$. Note that the convolution term is well defined because $b(u)\mapsto \mu_r\ast b(u)$ maps all Borel representatives of $b(u)$ to the same element in $L^\infty(M)$; see \cite[Proposition 8.49]{Fo:book} for details.

\end{enumerate}
\end{remark}

\subsubsection*{A priori results and uniqueness} In \cite{AlEnJaMa23} we adapt Kruzkov type arguments to prove that $L^\infty$-bounds, energy estimates, and uniqueness hold for  entropy  solutions according to this definition.

\begin{lemma}[$L^\infty$-bound \cite{AlEnJaMa23}]\label{lem: maximumsPrinciple}
Assume \eqref{Omegaassumption}--\eqref{muassumption} and  $u$ is an entropy solution of \eqref{E}. Then
\begin{align*}
   \min\Big\{\essinf_{\Omega} u_0, \essinf_{Q^c} u^c\Big\} \leq u(t,x)\leq  \max\Big\{\esssup_{\Omega} u_0,  \esssup_{ Q^c} u^c\Big\}\qquad\text{for a.e. $(t,x)\in Q$.}
\end{align*}
\end{lemma}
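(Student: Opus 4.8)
The plan is to derive the $L^\infty$-bound directly from the entropy inequalities \eqref{eq: entropyInequalityVovelleMethod} by a Kru\v{z}kov-type argument, choosing the constant $k$ to be the conjectured upper bound $K\coloneqq \max\{\esssup_\Omega u_0, \esssup_{Q^c}u^c\}$ and working with the $(u-k)^+$ entropy; the lower bound follows symmetrically using $(u-k)^-$ and $K'\coloneqq\min\{\essinf_\Omega u_0, \essinf_{Q^c}u^c\}$. With this choice of $k$, the admissibility condition \eqref{eq: admissibilityConditionOnConstantAndTestfunction} reads $(b(u^c)-b(K))^+\varphi=0$; since $u^c\le K$ a.e.\ in $Q^c$ and $b$ is nondecreasing, $b(u^c)\le b(K)$, so $(b(u^c)-b(K))^+=0$ and the condition holds for \emph{every} $0\le\varphi\in C_c^\infty([0,T)\times\R^d)$. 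Likewise $(u_0-K)^+=0$ a.e.\ in $\Omega$ and $(\bu-K)^+$ is handled by noting the boundary term on $\Gamma$ involves $\ol u^c$, which we may assume (by modifying the extension, or absorbing into the argument) satisfies the same bound on $\po$; more carefully, one runs the argument on a slightly enlarged constant $k=K+\e$ and lets $\e\downarrow 0$ at the end so that $\ol u^c-k<0$ near $\po$ and that term vanishes too.

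The key steps, in order: (i) Fix $k=K$ (or $K+\e$). Then the right-hand side of \eqref{eq: entropyInequalityVovelleMethod} is zero (or $o(1)$ as $\e\to0$), leaving
$$
-\int_Q\!\big((u-k)^+\dell_t\varphi + F^+(u,k)\cdot\nabla\varphi\big)\,\dd x\,\dd t \le \int_Q \L^{\ge r}[b(u)]\,\sgn^+(u-k)\,\varphi\,\dd x\,\dd t + \int_M (b(u)-b(k))^+\L^{<r}[\varphi]\,\dd x\,\dd t.
$$
(ii) Estimate the two nonlocal terms on the right so that, after choosing $\varphi$ to approximate $\mathbbm 1_{(0,t_0)}\,w(t,x)$ with $w$ a suitable cutoff, they are controlled by $\int_Q (b(u)-b(k))^+\,(\text{bounded})\,\dd x\,\dd t$. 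The crucial sign observation is that where $u\le k$ we have $(u-k)^+=0$ and $(b(u)-b(k))^+=0$ (monotonicity of $b$), while where $u>k$ we have $\sgn^+(u-k)=1$; so both integrands are ``supported'' on $\{u>k\}$. Writing $\L^{\ge r}[b(u)]=\mu_r\ast b(u)-\mu_r(\R^d)b(u)$ as in Remark \ref{rem: defentsolnremark}\ref{rem: defentsolnremark(b)}, the diagonal term $-\mu_r(\R^d)(b(u)-b(k))$ has a \emph{favorable} sign on $\{u>k\}$ and can be dropped, while the convolution term $\mu_r\ast b(u)$ against $\mathbbm 1_{\{u>k\}}\varphi$ is bounded, after symmetrizing the $\mu_r$-integral, by $\mu_r(\R^d)$ times $\int (b(u)-b(k))^+\varphi$ plus a term involving exterior data which vanishes. (iii) Pass $\varphi$ to the indicator in time to obtain a Gronwall inequality of the form
$$
\int_\Omega (u(t_0,x)-k)^+\,\dd x \le C\int_0^{t_0}\!\!\int_\Omega (b(u(t,x))-b(k))^+\,\dd x\,\dd t \le C\,L_b\int_0^{t_0}\!\!\int_\Omega (u(t,x)-k)^+\,\dd x\,\dd t,
$$
using the local Lipschitz bound on $b$. (iv) Gronwall's lemma forces $\int_\Omega(u(t_0,\cdot)-k)^+\,\dd x=0$ for a.e.\ $t_0$, hence $u\le k=K$ a.e.\ in $Q$; let $\e\downarrow0$.

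The main obstacle I expect is step (ii) — handling the two nonlocal terms, and in particular the fact that $\L^{<r}[\varphi]$ is only defined via a principal value and is paired against $b(u)$ over all of $M$ (not just $Q$), so exterior contributions must be shown to cancel using $u=u^c\le K\le k$ in $Q^c$; one wants to recombine $\int_Q\L^{\ge r}[b(u)]\sgn^+\varphi$ and $\int_M(b(u)-b(k))^+\L^{<r}[\varphi]$ into a single quadratic-form-type expression $\int\!\!\int(\cdots)(\varphi(x+z)-\varphi(x))\,\dd\mu\,\dd x$ and exploit the sign/monotonicity structure, much as in the Kru\v{z}kov-doubling arguments of \cite{CiJa11,MiVo03}. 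A secondary technical point is the approximation of the time-indicator by smooth $\varphi\in C_c^\infty([0,T)\times\R^d)$ and the justification of Lebesgue points in $t_0$, which is routine. Since \cite{AlEnJaMa23} is cited as the source of this lemma, in the paper itself this is merely recalled; the sketch above indicates how it would be proved from Definition \ref{def: entropySolutionVovelleMethod}.
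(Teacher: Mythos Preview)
As you correctly note at the end, the paper does not prove this lemma here; it is imported from the companion paper \cite{AlEnJaMa23}. Your sketch is the standard Kru\v{z}kov-type maximum principle argument and is essentially correct: choosing $k=K$ makes the admissibility condition \eqref{eq: admissibilityConditionOnConstantAndTestfunction} hold for all $\varphi\ge0$, kills the initial term, and (since $u^c\in C^2$ by \eqref{u^cassumption}, so $\ol u^c\le K$ on $\Gamma$ by continuity) also kills the boundary term---the $\e$-perturbation you propose is harmless but not needed.

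One simplification worth noting: your step (ii) can be tightened so that Gronwall is avoided entirely. The pointwise Kato-type inequality $\sgn^+(u-k)\,\L^{\ge r}[b(u)]\le \L^{\ge r}[(b(u)-b(k))^+]$ (this is the ``convex inequality'' the paper invokes from \cite[Corollary C.2]{AlEnJaMa23} in the proof of Theorem \ref{thm: existenceForBoundedL}), together with the observation that $(b(u)-b(k))^+=0$ in $Q^c$ (hence $\L^{\ge r}[(b(u)-b(k))^+]\ge0$ there), lets you extend the $Q$-integral to $M$ and move $\L^{\ge r}$ onto $\varphi$ by symmetry. Combined with the $\L^{<r}$ term, the total nonlocal contribution becomes $\int_Q (b(u)-b(k))^+\L[\varphi]$. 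Choosing $\varphi=\chi(t)\psi_R(x)$ with $\psi_R\uparrow 1$ gives $\L[\psi_R]\to0$ and $\nabla\psi_R\to0$ uniformly on $\Omega$ (using $\mu(\{|z|>R\})\to0$ from \eqref{muassumption}), so in the limit one obtains $\int_\Omega(u(t_0)-K)^+\dd x\le0$ directly. Your Gronwall route also works once $r$ is fixed (so $\mu_r(\R^d)<\infty$) and $\varphi$ is split as $\chi(t)w(x)$ with $w\equiv1$ on $\Omega$; the constants then depend only on $r$, $\|w\|_\infty$, and $\|D^2_xw\|_\infty$, which stay bounded in the time-indicator approximation.
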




Recall that $\B$ is the bilinear operator associated with $\L$ defined in \eqref{def:blform}, and define
$$
F(u,k)\coloneqq (F^++F^-)(u,k)=\sgn(u-k)(f(u)-f(k)).
$$
\begin{proposition}[Energy estimate \cite{AlEnJaMa23}]\label{prop: finiteEnergy}
Assume \eqref{Omegaassumption}--\eqref{muassumption} and $u$ is an entropy solution of \eqref{E}.
Then 
\begin{align}\label{eq: finiteEnergy}\nonumber
&\,\int_M \B\big[b(u)-b(\bu),b(u)-b(\bu)\big] \dd x\dd t\\
    \leq &\,\int_{\Omega} H(u_0,\bu(0))\dd x - \int_{Q} \Big[(u-\bu) \bu_t + F (u,\bu)\cdot\nabla\bu\Big]b'(\bu)\dd x\dd t \\
   &\,+ \int_{Q} \L[b(\bu)](b(u)-b(\bu))\dd x\dd t,
    \nonumber
\end{align}
where  
$
H(u,k):=\int_k^u \big(b(\xi)-b(k)\big)\dd \xi \geq 0$. Moreover, the right-hand side of \eqref{eq: finiteEnergy} is finite.
\end{proposition}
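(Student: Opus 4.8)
The plan is to derive the energy estimate \eqref{eq: finiteEnergy} directly from the entropy inequalities \eqref{eq: entropyInequalityVovelleMethod}, following the Kru\v{z}kov/Carrillo-type strategy of combining the two semi-entropy inequalities, but now adapted to the nonlocal splitting. The starting point is to add the ``$+$'' and ``$-$'' versions of \eqref{eq: entropyInequalityVovelleMethod}, which replaces $(u-k)^\pm$ by $|u-k|$, $F^\pm$ by $F$, $\sgn^\pm$ by $\sgn$, and the nonlinear diffusive defect term $(b(u)-b(k))^\pm\L^{<r}[\varphi]$ by $|b(u)-b(k)|\L^{<r}[\varphi]$. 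I would then integrate against $k$: more precisely, take $k=b^{-1}$-type values, or rather differentiate in $k$, multiplying by $b'(k)$ and integrating over $k$ against $\varphi$ evaluated at the relevant point, with $k$ ranging over the range of $\bu$. The classical trick is the identity $\int |u-k| b'(k)\,\mathrm{d}\nu(k)$-manipulations that turn $\int |b(u)-b(k)|\,\cdot\,$ into the quadratic quantity $H(u,\bu)$ after integrating by parts; concretely one chooses the test configuration so that $k$ is replaced by $\bu(t,x)$ and uses $\partial_k H(u,k)=-(b(u)-b(k))\cdot$ (plus derivative-of-$b$ terms).

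The key technical steps, in order, would be: (1) symmetrize, i.e.\ combine the $\pm$ entropy inequalities to get the ``full Kru\v{z}kov'' inequality with $|u-k|$ and $F(u,k)$; (2) localize and choose the test function: take $\varphi=\varphi(t,x)$ approximating $\mathbbm{1}_Q$ (or a smooth cutoff that is $1$ on $Q$ and supported near $\overline Q$) so that the boundary term $L_f\int_\Gamma(\bu^c-k)^\pm\varphi\,\mathrm{d}\sigma\,\mathrm{d}t$ can be controlled — here is where the admissibility condition \eqref{eq: admissibilityConditionOnConstantAndTestfunction} and the companion boundary-integrability results of \cite{AlEnJaMa23} enter, to ensure this term is finite and has the right sign; (3) integrate in $k$ against $b'(\bu)\,\mathrm{d}x\,\mathrm{d}t$-type measures, i.e.\ replace the constant $k$ by the function $\bu(t,x)$ via the layer-cake/Kru\v{z}kov doubling-in-$k$ device, producing $H(u,\bu)$ in the time-derivative slot, $\int F(u,\bu)\cdot\nabla\bu\, b'(\bu)$ in the flux slot, and $(u-\bu)\bu_t b'(\bu)$ from the $\partial_t\varphi$ term after the substitution; (4) handle the nonlocal terms: the term $-\int_Q \L^{\geq r}[b(u)]\sgn(u-\bu)\varphi$ combines with $-\int_M |b(u)-b(\bu)|\L^{<r}[\varphi]$; after the $k$-integration and after letting $\varphi\nearrow\mathbbm{1}_Q$ these should assemble, using the symmetry of $\mu$ and the integration-by-parts identity $\int g\,\L[h] = -\int \B[g,h]$, into $\int_M \B[b(u)-b(\bu),b(u)-b(\bu)]\,\mathrm{d}x\,\mathrm{d}t$ plus the remainder $\int_Q \L[b(\bu)](b(u)-b(\bu))$; (5) pass to the limit $r\to 0$ and in the cutoff, using the energy/regularity a priori information and dominated convergence, and check the right-hand side is finite (it is a sum of terms each controlled by $\|u\|_{L^\infty}$, $\|b\|_{\mathrm{Lip}}$, $\|\bu\|_{C^2}$, and $\int |z|^2\wedge 1\,\mathrm{d}\mu$).

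I expect the main obstacle to be step (4) — correctly recombining the two nonlocal contributions across the $r$-splitting and the $k$-integration to produce exactly the bilinear form $\B$. The difficulty is that $\L^{\geq r}[b(u)]$ is paired with $\sgn(u-\bu)\varphi$, not with $b(u)-b(\bu)$, so one must first use the chain-rule-type inequality (Carrillo's lemma, the nonlocal analogue being that $\sgn(u-\bu)\L^{\geq r}[b(u)]$ relates to $\L^{\geq r}$ acting on $(b(u)-b(\bu))^{\pm}$ up to sign-definite errors) together with the degeneracy $b'\geq 0$ to extract a genuinely quadratic quantity; the part ``below $r$'' contributes $\int_M |b(u)-b(\bu)|\,\L^{<r}[\varphi]$ which, as $\varphi\to\mathbbm{1}$ and $r\to0$, must be shown to vanish against the right piece and the remaining singular part to converge to $\int\B^{<r}[b(u)-b(\bu),b(u)-b(\bu)]$. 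Keeping the error terms sign-definite (or showing they vanish in the limit) while simultaneously taking $r\to0$ and $\varphi\nearrow\mathbbm{1}_Q$ is the delicate balancing act; a secondary obstacle is justifying that $\L[b(\bu)]\in L^\infty(Q)$ and all integrands are dominated so the final finiteness claim and the limit passages are rigorous, which relies on \eqref{u^cassumption} giving $\bu\in C^2\cap L^\infty$ and hence $\L[b(\bu)]$ bounded.
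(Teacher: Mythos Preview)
The present paper does not prove this proposition: it is quoted from the companion paper \cite{AlEnJaMa23}, and the only information given here is the sentence ``In \cite{AlEnJaMa23} we adapt Kru\v{z}kov type arguments to prove that $L^\infty$-bounds, energy estimates, and uniqueness hold.'' So there is no proof in this paper to compare against; your Carrillo-type sketch (combine the $\pm$ inequalities, integrate in $k$ against $b'(k)$ to replace $k$ by $\bu$, and reassemble the nonlocal pieces into $\B$) is consistent with that one-line description and with the standard strategy for such estimates.

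Two technical caveats on your outline. First, in step (2) you cannot send $\varphi\to\mathbbm{1}_Q$ for a \emph{fixed} constant $k$: the admissibility condition \eqref{eq: admissibilityConditionOnConstantAndTestfunction} forces $(b(u^c)-b(k))^\pm\varphi=0$ on $Q^c$, which for generic $k$ means $\varphi$ must vanish on $Q^c$. The substitution $k\leadsto\bu(t,x)$ is therefore not a convenience but the mechanism that makes the test function admissible, and the order of your steps (2) and (3) must effectively be reversed (or done simultaneously via doubling). Second, your claim at the end that $\L[b(\bu)]\in L^\infty(Q)$ is too strong under the stated hypotheses: $\bu\in C^2\cap L^\infty$ does not give $b(\bu)\in C^2_b(\R^d)$ uniformly, and the present paper only ever uses $\L[b(\bu)]\in L^1(Q)$ (see the proof of Lemma \ref{lem: uniformEnergy}, which invokes \cite[Corollary 3.7]{AlEnJaMa23} for exactly this bound). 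This is enough for finiteness of the right-hand side since $b(u)-b(\bu)\in L^\infty$.
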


\begin{remark}\label{remark: energyEstimateImpliesLocalBoundedEnergyOfBU}
This global energy estimate for $b(u)-b(\bu)$ implies 
a local energy estimate for $b(u)$: By the inequality $x^2\leq 2(x-y)^2 + 2y^2$ we have
$$
\int_M \B\big[b(u),b(u)\big]\varphi \dd x\dd t\leq 2\int_M \B\big[b(u)-b(\bu),b(u)-b(\bu)\big]\varphi \dd x\dd t+2\int_M \B\big[b(\bu),b(\bu)\big] \varphi\dd x\dd t
$$
for any $0\leq \varphi \in C_c^\infty(M)$, where 
$\B\big[b(\bu),b(\bu)\big]$ is locally bounded since $b(\bu)$ is bounded and locally Lipschitz. 
If $\int_M \B[b(\bu),b(\bu)] \dd x\dd t<\infty$, we can send $\varphi\to 1$ to get a global energy estimate for $b(u)$.
\end{remark}

\begin{theorem}[Uniqueness \cite{AlEnJaMa23}]\label{thm: uniquenessEntropySolutions}
Assume \eqref{Omegaassumption}--\eqref{muassumption}. If $u$ and $v$ are entropy solutions of \eqref{E}
with data $u_0,v_0$, and $u^c=v^c$, then for a.e. $t\in (0,T)$
\begin{align*}
    \int_{\Omega}|u(t,x)-v(t,x)|\dd x \leq \int_{\Omega}|u_0(x)-v_0(x)|\dd x.
\end{align*}
In particular, if also $u_0=v_0$, then $u=v$ a.e. in $(0,T)\times\R^d$.
\end{theorem}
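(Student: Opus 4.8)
The plan is to prove the $L^1$-contraction by a doubling-of-variables argument adapted to the semi-Kru\v{z}kov formulation of Definition \ref{def: entropySolutionVovelleMethod}. Since $u$ and $v$ share the same exterior data $u^c=v^c$ and are only required to be $L^\infty(M)$, the argument must handle (1) the nonlocal term split into a singular part $\L^{<r}$ and a bounded part $\L^{\geq r}$, with the extra $r\to 0$ limit that is characteristic of these nonlocal entropy theories, and (2) the boundary contribution $L_f\int_\Gamma(\overline u^c-k)^\pm\varphi\,\dd\sigma$, which must be shown to be harmless (i.e. its contribution can be removed) precisely because $u^c=v^c$.

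First I would take the entropy inequality \eqref{eq: entropyInequalityVovelleMethod} for $u$ with the ``$+$'' sign, constant $k$ replaced by $v(s,y)$, and test function $\varphi(t,x)$; simultaneously take the entropy inequality for $v$ with the ``$-$'' sign (or symmetrically), constant replaced by $u(t,x)$, and test function $\varphi(s,y)$. Here one uses a test function of the form $\psi(t,x)\,\rho_\delta(t-s)\rho_\delta(x-y)$ with $\psi\in C_c^\infty([0,T)\times\R^d)$ nonnegative and $\rho_\delta$ standard mollifiers, chosen compatibly so that the admissibility condition \eqref{eq: admissibilityConditionOnConstantAndTestfunction} is met — this is where $u^c=v^c$ enters, ensuring $(b(u^c)-b(v))^\pm\varphi=0$ and $(b(v^c)-b(u))^\pm\varphi=0$ on $Q^c$ after the mollification. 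Adding the two inequalities and integrating in $(s,y)$, the time- and flux-derivative terms combine in the standard Kru\v{z}kov way: the $\partial_t$ terms telescope against $\rho_\delta'$, the flux terms combine since $F^+(u,v)+F^-(v,u)$ with matching signs produces $F(u,v)\cdot\nabla\psi$ after passing $\delta\to 0$, leaving $-\int_M |u-v|\,\partial_t\psi - \int_M F(u,v)\cdot\nabla\psi$ plus initial and boundary contributions.

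The heart of the matter is the nonlocal terms. After doubling, one gets the $\L^{\geq r}$-terms $-\int\int \L^{\geq r}[b(u)](t,x)\sgn^+(u(t,x)-v(s,y))\varphi$ and the analogous one for $v$, plus the $\L^{<r}$-terms involving $(b(u)-b(v))^\pm\L^{<r}[\varphi]$. One lets $\delta\to 0$ first, so the two copies of variables collapse; the key algebraic identity is that $\sgn^+(a-b)\L^{\geq r}[b(u)] - \sgn^-(a-b)\L^{\geq r}[b(v)]$ together with the $\L^{<r}$ pieces can be reorganized, using symmetry of $\mu$ and the inequality $\sgn^+(b(u)(x)-b(v)(x))\big((b(u)-b(v))(x+z)-(b(u)-b(v))(x)\big) \le |b(u)-b(v)|(x+z) - |b(u)-b(v)|(x)$ (which holds because $b$ is nondecreasing, so $\sgn^+(u-v)=\sgn^+(b(u)-b(v))$ wherever $b(u)\ne b(v)$), into something of the form $\int\int (\,\text{something}\,)\L[\,|b(u)-b(v)|\,$-related test object$\,]$, which is then controlled by sending $r\to 0$. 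This ``splitting + reassembly + $r\to 0$'' step, reproducing in the bounded-domain setting the mechanism of \cite{CiJa11, Ali07}, is the main obstacle: one must verify the limit $r\to 0$ kills the bad diffusion contributions while keeping the test function manipulations legitimate, and that the boundary term $L_f\int_\Gamma$, being identical for $u$ and $v$ with the same $\overline u^c$, cancels or is absorbed when $\psi$ is chosen to not see $\partial\Omega$ in the limit.

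Finally, with the Kru\v{z}kov inequality $-\int_M |u-v|\partial_t\psi - \int_M F(u,v)\cdot\nabla\psi \le \int_\Omega |u_0-v_0|\psi(0,\cdot)$ in hand (the diffusion contributions having been shown $\le 0$ or $\to 0$, and the boundary terms cancelling), I would choose $\psi(t,x)=\theta_\nu(t)\zeta(x)$ with $\zeta\uparrow \mathbbm{1}_\Omega$ appropriately and $\theta_\nu$ approximating $\mathbbm{1}_{[0,\tau]}$, then pass to the limit to obtain $\int_\Omega |u(\tau,x)-v(\tau,x)|\,\dd x \le \int_\Omega |u_0-v_0|\,\dd x$ for a.e.\ $\tau\in(0,T)$; here one needs the finite propagation / locality to discard the flux term near $\partial\Omega$, using $f$ Lipschitz and the boundary regularity \eqref{Omegaassumption}. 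The last assertion ($u_0=v_0 \Rightarrow u=v$ on $(0,T)\times\R^d$) is then immediate on $Q$, and on $Q^c$ follows from part \eqref{def: entropyInequalityVovelleMethod-2} of the definition since $u=u^c=v^c=v$ there. Since Theorem \ref{thm: uniquenessEntropySolutions} is quoted from \cite{AlEnJaMa23}, in this paper I would simply cite that reference rather than reproduce the full doubling argument.
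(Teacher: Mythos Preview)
Your final sentence is exactly what the paper does: Theorem \ref{thm: uniquenessEntropySolutions} is stated without proof and attributed to \cite{AlEnJaMa23}, so in that sense your proposal matches the paper's ``proof'' precisely.

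Your sketch of the underlying argument is in the right spirit (doubling of variables, splitting $\L=\L^{<r}+\L^{\geq r}$, sending $r\to 0$, Kru\v{z}kov-type Gr\"onwall at the end), and indeed the paper later alludes to the doubling proof in \cite[Section 5]{AlEnJaMa23} when handling entropy-process solutions. That said, two points in your outline are more delicate than you indicate. First, the admissibility condition \eqref{eq: admissibilityConditionOnConstantAndTestfunction} does \emph{not} hold for free when $k=v(s,y)$ with $(s,y)\in Q$: the condition $(b(u^c)-b(v(s,y)))^\pm\varphi=0$ on $Q^c$ generally fails for interior values of $v$, so one cannot simply plug $k=v(s,y)$ into \eqref{eq: entropyInequalityVovelleMethod}. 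The actual argument in \cite{AlEnJaMa23} requires a more careful choice of test functions and constants near the boundary. Second, the boundary integrals $L_f\int_\Gamma(\overline u^c-k)^\pm\varphi$ do not ``cancel'' between the two inequalities; they must be shown to vanish in the limit via a boundary-layer argument exploiting the common exterior data and the semi-Kru\v{z}kov structure, which is one of the main technical contributions of \cite{AlEnJaMa23}. Since you are citing rather than reproducing, these points do not affect your proposal here.
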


\subsubsection*{Existence and vanishing viscosity}

The main contribution of this paper is the following existence result for \eqref{E} under slightly stronger assumptions on $\L$ (or $\mu$).  The result is a combination of Theorems \ref{thm: existenceForBoundedL}  and \ref{thm: existenceForUnboundedL},  and Corollary \ref{cor: allAbsolutelyContinuousMeasuresAreIncludedInOurExistenceResult}. A summary of its proof was given at the end of the previous section.

\begin{theorem}[Existence]\label{thm: mainResultExistence}
Assume \eqref{Omegaassumption}--\eqref{muassumption}.  If either $\mu(\R^d)<\infty$ or \eqref{muassumption2} holds, then there exists an entropy solution $u$ of \eqref{E}. In particular, if $\mu$ is absolutely continuous with respect to the Lebesgue measure on $\R^d$, then there exists an entropy solution $u$ of \eqref{E}. 
%
\end{theorem}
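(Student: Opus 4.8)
The plan is to follow exactly the four-step strategy outlined in the introduction, combining the cited results. First I would establish existence when $\mu$ is a finite measure, i.e. when $\L$ is a bounded, zero-order operator. In that regime $\L[b(u)] = \mu\ast b(u) - \mu(\R^d)b(u)$ is a Lipschitz lower-order perturbation, so the equation becomes a scalar conservation law with a bounded source term. The idea is to set up a fixed-point iteration: given $w\in L^\infty(Q)$, solve the scalar conservation law $\partial_t u + \operatorname{div} f(u) = \L^{\ge r}[b(w)]$ (treating the right-hand side as a known forcing, and exploiting that only the nonsingular part $\L^{\ge r}=\L$ appears here since $\mu$ is finite) with the Dirichlet data $u^c$; existence and an $L^1$-contraction/comparison for this sub-problem come from \cite{PoVo03}. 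One checks the solution map is a contraction on $C([0,T];L^1(\Omega))$ for $T$ small, using the $L^1$-contraction of the conservation law against the Lipschitz continuity of $b$ and boundedness of $\mu$, then iterates over time intervals using the $L^\infty$-bound of Lemma \ref{lem: maximumsPrinciple} to keep the iteration in a fixed ball. The fixed point is the desired entropy solution; this is Theorem \ref{thm: existenceForBoundedL}.

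Next I would handle general $\mu$ satisfying \eqref{muassumption2} by approximation. Write $\mu_n \coloneqq \mu\big|_{\{|z|\ge 1/n\}}$, which are finite measures with associated bounded Lévy operators $\L_n$, and $\L_n \to \L$ in the appropriate sense. By Step (i) each approximate problem has an entropy solution $u_n$, uniformly bounded in $L^\infty(M)$ by Lemma \ref{lem: maximumsPrinciple}. The key point is to pass to the limit. For this one invokes the stability result Proposition \ref{prop: stabilityOfEntropySolutionsWithRespectToPerturbationsInL}, which promotes $\L_n\to\L$ plus weak-$\star$ convergence of $u_n$ (via nonlinear weak-$\star$ compactness / entropy-process solutions) to genuine convergence of $u_n$ to an entropy solution $u$ of \eqref{E} — provided $(b(u_n))_{n\in\N}$ is precompact in $L^2(Q)$. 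That precompactness is exactly what Corollary \ref{cor: jointCorecivityImpliesPrecompactnessOfb(u_n)} delivers: the energy estimate Proposition \ref{prop: finiteEnergy} gives uniform bounds for $b(u_n)-b(\bu)$ in the energy spaces $H_0^{\L_n}$, and \eqref{muassumption2} is precisely the condition (Proposition \ref{prop: coerciveSymbolEqualsCompactEmbeddingOfLevySpaceInL2}) ensuring this translates into $L^2(Q)$-compactness, one needs to check that the approximating sequence $(\L_n)$ satisfies the joint-coercivity hypothesis of that corollary, which holds because $m_n(\xi) = \int_{|z|\ge 1/n}(1-\cos(\xi\cdot z))\dd\mu(z) \nearrow m(\xi) \to \infty$. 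This gives Theorem \ref{thm: existenceForUnboundedL}.

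Finally, the last sentence of the theorem is immediate: if $\mu$ is absolutely continuous with respect to Lebesgue measure, then either $\mu(\R^d)<\infty$ (covered by Step (i)) or, if $\mu$ is not finite, one verifies that \eqref{muassumption2} holds automatically — a Lévy measure with a density that fails to be integrable must concentrate enough mass to force $m(\xi)\to\infty$; this is Corollary \ref{cor: allAbsolutelyContinuousMeasuresAreIncludedInOurExistenceResult}. Combining the two cases proves Theorem \ref{thm: mainResultExistence}.

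I expect the main obstacle to be Step (ii), and specifically the interplay between the stability result and the compactness of $b(u_n)$. Unlike the whole-space theory, there is no $L^1$-contraction that incorporates boundary data, so $u_n$ itself need not be strongly precompact; one only gets weak-$\star$ compactness, which forces the entropy-process-solution machinery and the nonlinear weak-$\star$ limit. The delicate part is that the stability proof genuinely requires \emph{strong} $L^2$-convergence of the diffusive flux $b(u_n)$ (see Remark \ref{rem: whyWeNeedStrongConvergenceOfBUN}), and obtaining this from the energy estimate is subtle because the energy norms $H_0^{\L_n}$ vary with $n$; the argument must transfer weak time-regularity of $\partial_t u_n$ to $\partial_t b(u_n)$ and combine it with the $\L_n$-dependent spatial estimates in a way that is uniform in $n$. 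Verifying that the concrete truncation $\mu_n = \mu|_{\{|z|\ge 1/n\}}$ satisfies the hypotheses of Corollary \ref{cor: jointCorecivityImpliesPrecompactnessOfb(u_n)} — rather than just that some approximating sequence does — is where the assumption \eqref{muassumption2} is used in an essential way.
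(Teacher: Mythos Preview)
Your overall architecture matches the paper, and Step~(i) and the final sentence are fine. However, there is a genuine gap in Step~(ii): your claim that the truncations $\mu_n=\mu|_{\{|z|\ge 1/n\}}$ satisfy the joint coercivity hypothesis of Corollary~\ref{cor: jointCorecivityImpliesPrecompactnessOfb(u_n)} ``because $m_n(\xi)\nearrow m(\xi)\to\infty$'' is false in general. The joint coercivity condition is $\liminf_{|\xi|,n\to\infty}m_n(\xi)=\infty$, which for an increasing sequence $m_1\le m_2\le\cdots$ reduces (Remark~\ref{remark: aSimplificationToTheComplicatedCorecivityCondition}) to $\lim_{n\to\infty}\liminf_{|\xi|\to\infty}m_n(\xi)=\infty$. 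But each $m_n$ is the multiplier of a \emph{finite} measure and hence bounded (Lemma~\ref{lem: zeroOrderLevyOperatorsHaveBoundedSymbols}); worse, Lemma~\ref{lem: examplesOfBadLevyOperators}\,(b) constructs an explicit $\mu$ satisfying \eqref{muassumption2} for which $m_n(\pi 2^{n+1})=0$, so the truncations fail joint coercivity. Monotone pointwise convergence $m_n\nearrow m$ with $m(\xi)\to\infty$ is simply not enough.

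The paper repairs this with a two-step approximation (Theorem~\ref{thm: existenceForUnboundedL}): first perturb $\L$ to $\L_\varepsilon\coloneqq\L-\varepsilon(-\Delta)^{1/2}$, whose Lévy measure has an absolutely continuous part of infinite mass. By Lemma~\ref{lem: absolutelyContinuousMeasureHasNiceSymbol}, the truncations $\L_\varepsilon^{\ge 1/n}$ \emph{do} satisfy joint coercivity, so your argument yields an entropy solution $u_\varepsilon$ for $\L_\varepsilon$. Then one sends $\varepsilon\to 0$ along a sequence: since the multipliers of $\L_{\varepsilon_n}$ dominate $m$ pointwise, joint coercivity for this second sequence follows directly from \eqref{muassumption2}, and the stability/compactness machinery applies once more. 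Without this detour through $\L_\varepsilon$, your Step~(ii) does not go through.
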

\begin{remark}
     The conditions $\mu(\R^d)<\infty$ and \eqref{muassumption2} are mutually exclusive (Lemma \ref{lem: zeroOrderLevyOperatorsHaveBoundedSymbols}) and represent two manageable cases: In the first, $\L$ is bounded on $L^2(\R^d)$, and in the other, $\L$ is `singular enough' so that $(\textup{Id}+\L)^{-1}\colon L^2(\Omega)\to L^2(\R^d)$ is compact. The remaining intermediate case, where both $\mu(\R^d)<\infty$ and \eqref{muassumption2} fail, cannot be covered by the techniques here, but we expect that existence still holds. These problematic operators $\L$ are characterized in terms of their multiplier $m$ (Lemma \ref{lem: examplesOfBadLevyOperators}), but not easily described directly in terms of $\mu$: It is necessary (Lemmas \ref{lem: zeroOrderLevyOperatorsHaveBoundedSymbols}, \ref{lem: absolutelyContinuousMeasureHasNiceSymbol}) that $\mu_a(\R^d)<\mu_s(\R^d)=\infty$, where $\mu_a,\mu_s$ are the absolutely continuous and singular part of $\mu$, but this is not a sufficient condition (see second example in Lemma \ref{lem: examplesOfBadLevyOperators}).
\end{remark}

As a simple byproduct of our approach,  we obtain a convergence result for the fractional-nonlinear vanishing viscosity method, 
\begin{equation}\label{E3}
\begin{cases} 
\dell_tu_n+\textup{div}\big(f(u_n)\big)=-\frac1n(-\Delta)^{\frac{\alpha}{2}}[b(u_n)] \qquad&\text{in}\qquad Q,\\
u_n=u^c \qquad&\text{in}\qquad Q^c,\\
u_n(0,\cdot)=u_0 \qquad&\text{on}\qquad \Omega,
\end{cases}
\end{equation}
where $\alpha\in(0,2)$.
Solutions $u_n$ converge as $n\to\infty$ to the solution $u$ of the boundary value problem for the scalar conservation law 
\begin{equation}\label{eq:SCL}
\begin{cases} 
\dell_tu+\textup{div}\big(f(u)\big)=0 \qquad&\text{in}\qquad Q,\\
u=u^c \qquad&\text{on}\qquad \Gamma,\\
u(0,\cdot)=u_0 \qquad&\text{on}\qquad \Omega.\\
\end{cases}
\end{equation}

\begin{proposition}[Vanishing viscosity]\label{prop:vvisc}
Assume \eqref{Omegaassumption}--\eqref{u_0assumption}
and $u$ and $u_n$ are entropy solutions of \eqref{eq:SCL} and \eqref{E3}, respectively. Then
$$
u_n\to u \quad \text{as}\quad n\to\infty\qquad\text{in\quad $L^p(Q)$}\quad \text{for all $p\in[1,\infty)$.}
$$
\end{proposition}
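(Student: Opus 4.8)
The plan is to view \eqref{E3} as a sequence of problems of the form \eqref{E} with Lévy operators $\L_n = -\frac1n(-\Delta)^{\alpha/2}$, i.e. with Lévy measures $\mu_n = \frac1n \mu_\alpha$, and \eqref{eq:SCL} as the limiting problem \eqref{E} with $\mu_\infty \equiv 0$ (so $\L_\infty = 0$, $b$ irrelevant since the diffusion term vanishes). One then wants to invoke the stability result Proposition \ref{prop: stabilityOfEntropySolutionsWithRespectToPerturbationsInL}: if $\L_n \to \L_\infty$ "in an appropriate way" and the diffusive fluxes $(b(u_n))_n$ are precompact in $L^2(Q)$, then $u_n \to u$. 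Concretely, I would: (i) check that $\mu_n \to 0$ in the mode of convergence required by Proposition \ref{prop: stabilityOfEntropySolutionsWithRespectToPerturbationsInL} — since $\mu_n = \frac1n\mu_\alpha$ this should be immediate, as for every fixed truncation radius $r$ the finite measure $\mu_{n,r}$ and the singular symbol contributions are all scaled by $1/n$ and vanish; (ii) establish $L^2(Q)$-precompactness of $(b(u_n))_n$; (iii) conclude $u_n \to u$ in the sense delivered by the stability proposition (which, via nonlinear weak-$\star$/entropy-process solutions, should give strong $L^1_{\loc}$ or $L^p$ convergence on $Q$); (iv) upgrade to $L^p(Q)$ for all $p \in [1,\infty)$ using the uniform $L^\infty$ bound from Lemma \ref{lem: maximumsPrinciple} together with interpolation, noting that all $u_n$ and $u$ are bounded by the same constant depending only on $\|u_0\|_{L^\infty(\Omega)}$ and $\|u^c\|_{L^\infty(Q^c)}$.

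For step (ii) I would use Corollary \ref{cor: jointCorecivityImpliesPrecompactnessOfb(u_n)} (the precompactness criterion on $(\L_n)_n$ derived from the energy estimate). The energy estimate Proposition \ref{prop: finiteEnergy} applied to $u_n$ gives a bound on $\int_M \B_n[b(u_n)-b(\bu), b(u_n)-b(\bu)]\dd x\dd t$, and one checks the right-hand side is bounded uniformly in $n$: the terms $\int_\Omega H(u_0,\bu(0))$, $\int_Q[(u_n-\bu)\bu_t + F(u_n,\bu)\cdot\nabla\bu]b'(\bu)$ are controlled by the uniform $L^\infty$ bound and the $C^2$-regularity of $\bu$, and $\int_Q \L_n[b(\bu)](b(u_n)-b(\bu))$ is $O(1/n)$ since $\L_n[b(\bu)] = -\frac1n(-\Delta)^{\alpha/2}[b(\bu)]$ is uniformly bounded (as $b(\bu)$ is $C^2$ and bounded, modulo the usual care near $\partial\Omega$, which is where one may instead use the local energy estimate of Remark \ref{remark: energyEstimateImpliesLocalBoundedEnergyOfBU} and a cutoff). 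Then one must verify the hypothesis of Corollary \ref{cor: jointCorecivityImpliesPrecompactnessOfb(u_n)} relating the multipliers $m_n = \frac1n m_\alpha$ — here one should be careful: the precompactness of $(b(u_n))_n$ via that corollary typically requires the multipliers to grow, but $m_n \to 0$ pointwise, so the corollary may need to be applied to a rescaled quantity, or precompactness of $(b(u_n))$ may instead follow because $b(u_n)$ converges strongly to $b(u)$ directly (if $b$ is, say, the identity these terms are trivially handled; in general one may exploit that $\int_M \B_n[b(u_n),b(u_n)]\dd x\dd t = O(1/n) \cdot (\text{bounded in the } \mu_\alpha\text{-energy})$, forcing $b(u_n) - b(\bu)$ to converge in the homogeneous $\alpha/2$-seminorm up to the $1/n$ factor).

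The main obstacle I expect is precisely step (ii) in the degenerate case: reconciling the precompactness mechanism of Corollary \ref{cor: jointCorecivityImpliesPrecompactnessOfb(u_n)} — which is designed for a sequence $\L_n \to \L$ with $\L$ itself singular enough — with the present situation where the limit operator is the zero operator and the coefficients $1/n$ degenerate the energy. The resolution is likely that when $\L_\infty = 0$ the diffusive flux in the limit equation simply does not appear, so one does not need $b(u_n)$ to converge to anything diffusion-related; it suffices that $b(u_n)$ be bounded in $L^\infty$ and that the entropy-process limit, which is a priori a measure-valued solution of the pure conservation law \eqref{eq:SCL}, is a genuine entropy solution by uniqueness (Theorem \ref{thm: uniquenessEntropySolutions} and the classical reduction for scalar conservation laws, i.e. the entropy-process solution equals the entropy solution). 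In other words, for this corollary we may be able to bypass strong $L^2$-compactness of $b(u_n)$ altogether and close the argument at the level of the conservation law, using that the nonlocal terms in the entropy inequality \eqref{eq: entropyInequalityVovelleMethod} are $O(1/n)$ in a suitable weak sense and drop out in the limit. If the general stability proposition does require $L^2$-precompactness of $b(u_n)$ as a hard hypothesis, then I would instead give a direct argument: extract a nonlinear weak-$\star$ limit (entropy-process solution) $\nu_{t,x}$ of $u_n$, pass to the limit in \eqref{eq: entropyInequalityVovelleMethod} noting the diffusion terms vanish, identify $\nu$ as an entropy-process solution of \eqref{eq:SCL}, invoke uniqueness of such (for scalar conservation laws on bounded domains this is classical, cf.\ \cite{EyGaHe00,Vov02}) to get $\nu_{t,x} = \delta_{u(t,x)}$, hence $u_n \to u$ in $L^p(Q)$ for $p<\infty$ by the standard argument combining nonlinear weak-$\star$ convergence to a Dirac mass with the uniform $L^\infty$ bound.
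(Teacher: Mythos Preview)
Your proposal is correct and ultimately arrives at exactly the paper's argument: the paper's proof says to follow the steps of Proposition~\ref{prop: stabilityOfEntropySolutionsWithRespectToPerturbationsInL} but observes that precompactness of $(b(u_n))_n$ is \emph{not needed} here since all terms featuring $\L$ (or its truncations) vanish as $n\to\infty$---this is precisely your final paragraph. Your detour through Corollary~\ref{cor: jointCorecivityImpliesPrecompactnessOfb(u_n)} is unnecessary (and, as you correctly diagnose, doomed since $m_n=\tfrac1n m_\alpha\to0$); you can discard steps (ii) and the attempted energy analysis entirely and go straight to the direct argument: extract a nonlinear weak-$\star$ limit, pass to the limit in \eqref{eq: entropyInequalityVovelleMethod} using only that the $\L_n^{\geq r}$ and $\L_n^{<r}$ terms are $O(1/n)$ against bounded integrands, and invoke Theorem~\ref{thm: uniquenessEntropyProcessSolutions} (which covers $\mu=0$) to conclude.
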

 In fact, in the previous result one can replace $-(-\Delta)^{\frac{\alpha}{2}}$ with any $\L$ covered by Theorem \ref{thm: mainResultExistence}; the below proof is identical.
\begin{proof}[Proof of Proposition \ref{prop:vvisc}]
The proof follows the same steps as that of Proposition \ref{prop: stabilityOfEntropySolutionsWithRespectToPerturbationsInL}. While we lack precompactness of $(b(u_n))_{n\in\N}$, this is not needed as all terms featuring $\L$ (or, its truncations) are easily seen to vanish.
\end{proof}



\section{Nonlinear weak-$\star$ convergence, entropy-process solutions, and stability results} \label{sec: stabiltyWithRespectToVariationsInL}
 In this section we prove a stability results for entropy solutions of \eqref{E} under perturbations of $\L$; this is Proposition \ref{prop: stabilityOfEntropySolutionsWithRespectToPerturbationsInL}. The main tools shall be the notion of nonlinear weak-$\star$ convergence and entropy-process solutions   
following \cite[Section 6.9]{EyGaHe00}. 

\begin{definition}[Nonlinear weak-$\star$ convergence]\label{def: nonlinearWeakStartConvergence}
Let $U$ be an open subset of $\R^{1+d}$, $(u_n)_{n\in\N}\subset L^\infty(U)$, and $u\in L^\infty(U\times(0,1))$. Then $(u_n)_{n\in\N}$ converges towards $u$ \emph{in the nonlinear weak-$\star$ $L^\infty(U)$ sense} if 
\begin{equation}\label{eq: propertyOfYoungMeasure}
\begin{split}
    &\,\lim_{n\to\infty} \int_U \psi(u_n(t,x))\varphi(t,x) \dd x\dd t\\
    = &\, \int_0^1\int_U \psi(u(t,x,a))\varphi(t,x) \dd x \dd t \dd a, \qquad\text{for all $\varphi\in L^1(U)$ and all $\psi\in C(\R)$.}
\end{split}
\end{equation}
\end{definition}

\begin{remark}\label{remark: weakConvergence}\leavevmode
\begin{enumerate}[{\rm (a)}]
\item When $u_n\to u$ in the nonlinear weak-$\star$ $L^\infty(U)$ sense, then  $$
u_n\to \int_0^1u(\cdot,\cdot,a)\dd a
$$
in the (classically) weak-$\star$ $L^\infty(U)$ sense, and the sequence $(u_n)_n$ is bounded in $L^\infty(U)$. 
\item Any bounded sequence in $L^\infty(U)$ has a subsequence converging in the nonlinear weak-$\star$ $L^\infty(U)$ sense. See Proposition 6.4 in \cite{EyGaHe00} for a proof.
\item If a nonlinear weak-$\star$ limit $u$ of $(u_n)_n$ does not depend on $a$, i.e. $u\in L^\infty(U)$, then $u_n\to u$ in $L^p_{\textup{loc}}(U)$ for every $p\in[1,\infty)$. See Remark 6.16 in \cite{EyGaHe00} for a proof.
\item Nonlinear weak-$\star$ convergence is equivalent to Young measure convergence of \cite{DPe85}.
\end{enumerate}
\end{remark}

This notion of convergence is particularly useful for stability results in scalar parabolic-hyperbolic conservation laws, and for two. Firstly, $L^\infty$-control on a sequence of solutions is typically a trivial matter by maximum principles. Secondly, one can often upgrade nonlinear weak-$\star$ convergence to \textit{strong} convergence by a clever (now standard) argument: The first step of this argument is to define an appropriate solution concept for these nonlinear weak-$\star$ limits, and this leads to so-called entropy-\textit{process} solutions,  introduced in different settings e.g. in 
\cite[Eq. (5.36), p. 139]{EyGaHe00}, \cite[Def. 3]{Vov02}, and \cite[Def. 3.1]{MiVo03}. This concept allows for a family of functions $u_a\in L^\infty(M)$, for $a\in(0,1)$, to be considered a solution of \eqref{E} if it satisfies an $a$-averaged version of the entropy inequalities \eqref{eq: entropyInequalityVovelleMethod}.

\begin{definition}[Entropy-process solution]\label{def: entropyProcessSolution}
We say that $u=u(t,x,a)\in L^\infty(M\times (0,1))$ is an \emph{entropy-process solution} of \eqref{E} if:
\begin{enumerate}[(a)]
\item\label{def: entropyProcessInequalityVovelleMethod-1} \textup{(Averaged entropy inequalities in $\overline{Q}$)} For all $r>0$, and all $k\in\R$ and $0\leq \varphi\in C^\infty_c([0,T)\times\R^d)$ satisfying 
\begin{equation*}
(b(u^c)-b(k))^\pm\varphi = 0\qquad \text{a.e. in $Q^c$},
\end{equation*}
we have the inequality
\begin{equation}\label{eq: entropyProcessInequalityVovelleMethod}
\begin{split}
&\,-\int_0^1\int_Q \Big( (u-k)^\pm\partial_t\varphi + F^\pm(u,k)\cdot\nabla\varphi \Big)\dd x\dd t\dd a\\
&\,-\int_0^1\int_M \Big(\L^{\geq r}[b(u)-b(k)]\sgn^\pm(u-k)\varphi\cha + (b(u)-b(k))^\pm\L^{<r}[\varphi]\Big)\dd x\dd t\dd a\\
\leq&\,\int_\Omega (u_0-k)^\pm\varphi(0,\cdot) \dd x + L_f\int_\Gamma (\overline{u}^{c}-k)^\pm\varphi \dd\sigma(x)\dd t.
\end{split}
\end{equation}
\item\label{def: entropyProcessInequalityVovelleMethod-2} \textup{(Data in $Q^c$)} $u(t,x,a)=u^c(t,x)$ for a.e. $(t,x,a)\in (0,T)\times \Omega^c\times(0,1)$.
\smallskip
\item \textup{(Non-degenerate points)} The expression $b(u(t,x,a))$ is essentially independent of $a$ in $M\times (0,1)$.
\end{enumerate}
\end{definition}

\begin{remark}\label{rem:es}
 By (c) we see that $u$ is automatically constant in $a$ whenever $b'(u)>0$. In particular, if $u$ is independent of $a$ for a.e. $(t,x)\in Q$, then $\tilde{u}(t,x)\coloneqq \int_0^1u(t,x,a)\dd a$ is the unique entropy solution of \eqref{E}, as can be seen by  comparing Definition \ref{def: entropySolutionVovelleMethod} with the above.
\end{remark}

Clearly, every entropy solution $u$ corresponds to an entropy-process solution $\tilde u$ by defining $\tilde u(t,x,a)\coloneqq u(t,x)$. Under our assumptions
the correspondence also holds the other way:

\begin{theorem}[Entropy-process solutions are entropy solutions]\label{thm: uniquenessEntropyProcessSolutions}
Assume \eqref{Omegaassumption}--\eqref{muassumption} and let $u$ be an entropy-process solution of \eqref{E}. Then $u(t,x,a)$ is 
independent of $a$  a.e.,  and 
$$
\tilde{u}(t,x)\coloneqq \int_0^1 u(t,x,a)\, \dd a
$$
is the unique entropy solution of \eqref{E}.
\end{theorem}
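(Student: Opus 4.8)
The plan is to mimic the classical Kru\v{z}kov doubling-of-variables argument as used for the uniqueness result (Theorem \ref{thm: uniquenessEntropySolutions}), but now comparing an entropy-process solution $u=u(t,x,a)$ against another entropy-process solution $v=v(s,y,b)$, with \emph{independent} process variables $a,b\in(0,1)$. The goal is a Kato-type inequality of the form
\begin{equation*}
\int_0^1\!\!\int_0^1\!\!\int_\Omega |u(t,x,a)-v(t,x,b)|\dd x\dd a\dd b \le \int_\Omega |u_0(x)-v_0(x)|\dd x \qquad \text{for a.e. } t,
\end{equation*}
valid whenever the exterior data agree, $u^c=v^c$. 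Once this is established, taking $u=v$ as the \emph{same} entropy-process solution and $u_0=v_0$ forces $\int\int\int_\Omega |u(t,x,a)-u(t,x,b)|=0$, so $u(t,x,a)$ is a.e. independent of $a$; then Remark \ref{rem:es} identifies $\tilde u(t,x)=\int_0^1 u(t,x,a)\dd a$ as \emph{an} entropy solution, and uniqueness of entropy solutions (Theorem \ref{thm: uniquenessEntropySolutions}) makes it \emph{the} entropy solution.

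\textbf{Key steps.} First I would fix the entropy-entropy flux machinery: since the averaged inequalities \eqref{eq: entropyProcessInequalityVovelleMethod} hold for the semi-Kru\v{z}kov pairs $(u-k)^\pm$ with admissible test functions (those killing $(b(u^c)-b(k))^\pm$ on $Q^c$), I would double variables by choosing, for the $u$-inequality with $+$ sign, $k=v(s,y,b)$ and a test function $\varphi(t,x)=\psi(t,x)\rho_\delta(t-s)\omega_\e(x-y)$ (a product of a genuine cutoff $\psi$ on $[0,T)\times\R^d$ with standard mollifiers), and symmetrically for the $v$-inequality with $-$ sign and $k=u(t,x,a)$. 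Then I integrate the first in $(s,y,b)$, the second in $(t,x,a)$, and add. The admissibility constraint is handled exactly as in \cite{AlEnJaMa23}: the semi-Kru\v{z}kov structure together with $u^c=v^c$ ensures the test functions are admissible on the relevant exterior sets (this is precisely where the weaker condition $(b(u^c)-b(k))^\pm\varphi=0$, rather than $|b(u^c)-b(k)|\varphi=0$, is essential — see Remark \ref{rem: defentsolnremark}). The convective terms are treated by the standard Kru\v{z}kov calculus and produce, after $\e,\delta\to0$, the time-derivative and boundary-flux contributions; the boundary term $L_f\int_\Gamma(\ol u^c-k)^\pm\varphi$ on each side is what makes the exterior/boundary data cancel when $u^c=v^c$.

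\textbf{The nonlocal terms and the main obstacle.} The delicate part, and the reason assumption \eqref{muassumption} suffices, is the handling of the diffusion contributions: on each side we have a term $-\int\int_M \L^{\ge r}[b(u)-b(k)]\sgn^\pm(u-k)\varphi\cha$ and a term $-\int\int_M (b(u)-b(k))^\pm\L^{<r}[\varphi]$. Here I would exploit condition (c) in Definition \ref{def: entropyProcessSolution}: $b(u(t,x,a))$ is essentially independent of $a$, so write $\b(t,x):=b(u(t,x,a))$ and $\gamma(s,y):=b(v(s,y,b))$; the nonlocal terms then no longer see the process variables, and the analysis reduces to that of \cite{AlEnJaMa23}. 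The $\L^{<r}$ terms, after doubling and using $\L^{<r}[\omega_\e]$ bounds, are controlled by $\int_{|z|<r}|z|^2\dd\mu(z)$ times a local energy of $\b,\gamma$ (available from Proposition \ref{prop: finiteEnergy}), hence vanish as $r\to0$; the $\L^{\ge r}$ terms are combined across the two inequalities, and the Kru\v{z}kov inequality $\sgn^+(\b-\gamma)(\L^{\ge r}[\b-\gamma])\le \L^{\ge r}[(\b-\gamma)^+]$ (valid since $\L^{\ge r}$ is a bounded Lévy operator, Remark \ref{rem: defentsolnremark}(b)) turns the sum into something that integrates to a harmless boundary/cutoff remainder, again vanishing as $r\to0$ and $\psi\uparrow 1$. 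The main obstacle is bookkeeping: one must carefully justify passing the mollifier limits $\e,\delta\to0$ through the singular integrals and the $\sgn^\pm$ nonlinearity simultaneously in all three variables $(t,x)$, $(s,y)$ and the process variables $(a,b)$, and verify that all the ``escaping mass'' as $r\to0$ and $\psi\uparrow\mathbf 1_{[0,T)\times\Omega}$ is genuinely controlled by the a priori energy bound of Proposition \ref{prop: finiteEnergy} and the $L^\infty$-bound of Lemma \ref{lem: maximumsPrinciple} — none of which differs in substance from \cite{AlEnJaMa23}, so the cleanest exposition is to isolate exactly which lemmas from \cite{AlEnJaMa23} go through verbatim with the extra $\dd a\dd b$ integration and invoke them.
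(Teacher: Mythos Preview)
Your proposal is correct and follows essentially the same approach as the paper: double variables between two entropy-process solutions with independent process parameters, invoke verbatim the a priori estimates and uniqueness machinery of \cite{AlEnJaMa23} with the extra $\dd a\,\dd \tilde a$ integration, deduce the Kato inequality \eqref{proc_dbl}, and conclude $a$-independence and identification of $\tilde u$ via Remark \ref{rem:es}. (One cosmetic point: avoid using $b$ for the second process variable since it clashes with the diffusion nonlinearity $b(\cdot)$.)
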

    
\begin{proof}
We first prove uniqueness of entropy-process solutions: Let $u$ and $v$ be entropy-process solutions of \eqref{E} with initial data $u_0=v_0$ and exterior data $u^c=v^c$. By our definition, entropy-process solutions satisfy the same a priori estimates as those given for entropy solutions in \cite[Section 2]{AlEnJaMa23}. The proofs follow the \textit{exact} same steps, after integrating all expressions over $a\in (0,1)$ (expressions independent of $a$ are unaffected). Consequently, if we double variables setting $u=u(t,x,a)$ and $v=v(s,y,\tilde{a})$, 
we can follow the steps of the uniqueness proof of \cite[Section 5]{AlEnJaMa23} (integrating over $a,\tilde{a}\in(0,1)$) to conclude that
\begin{align}\label{proc_dbl}
    \int_0^1\int_0^1\int_\Omega |u(t,x,a)-v(t,x,\tilde a)|\dd x \dd a\dd \tilde a\leq  0,
\end{align}
for a.e. $t\in (0,T)$. This inequality forces the integrand to be essentially independent of $(a,\tilde a)\in (0,1)^2$, which can only hold if both $u$ and $v$ are essentially independent of $a,\tilde a$. Thus, they both coincide with $\tilde u(t,x)\coloneqq \int_0^1u(t,x,a)\dd a$ almost everywhere in $Q\times (0,1)$. As explained in Remark \ref{rem:es}, $\tilde u$ is then necessarily the unique entropy solution of \eqref{E}. 
\end{proof}

With these tools at hand, we can address the main problem of this section: Assume that $(u_n)_{n\in\N}$ is a sequence of entropy solutions of
\begin{equation}\label{E2}
\begin{cases} 
\dell_tu+\textup{div}\big(f(u)\big)=\L_n[b(u)] \qquad&\text{in}\qquad Q,\\
u=u^c \qquad&\text{in}\qquad Q^c,\\
u(0,\cdot)=u_0 \qquad&\text{on}\qquad \Omega.
\end{cases}
\end{equation}
 If $\L_n\to\L$, then we expect under certain assumptions that $u_n\to u$ where $u$ is the solution of the limiting problem \eqref{E}. The type of convergence for $\L_n$ we consider here, is a weighted total variation norm of the corresponding measures:
\begin{align}
\int_{\R^d} \big(|z|^2\wedge1\big)\ \dd|\mu_n-\mu|(z) \to 0
\qquad \text{as}\qquad n\to\infty,
\label{muassumptionlim}
\tag{$\textup{A}_{\mu}$-lim}
\end{align}
where $|\nu|=\nu^++\nu^-$ is the total variation measure of a signed measure $\nu$. Under this convergence, we have for $\phi\in C_c^\infty(\R^d)$, and $p\in[1,\infty]$:
\begin{align}\label{eq:convL}
&\|\L\phi_n\|_p\leq \, 2M\|\phi\|_{W^{2,p}},
\quad \text{and}\quad 
\|\L_n \phi - \L\phi\|_p\leq 2\|\phi\|_{W^{2,p}}\int_{\R^d} \big(|z|^2\wedge1\big)\ \dd|\mu_n-\mu|(z).
\end{align}
where $M\coloneqq \sup_n\int_{\R^d}(|z|^2\wedge1)\dd \mu_n(z)$ is finite by triangle argument using \eqref{muassumptionlim} and that $\mu$ satisfies \eqref{muassumption}. These $L^p$ bounds follow from \cite[Lemma 3.4]{AlEnJaMa23}. 
\begin{remark}\label{rem: definitionOfVagueConvergence}
    Defining $\tilde{\mu}_n$ by $\dd \tilde\mu_n (z)\coloneqq (|z|^2\wedge 1)\dd\mu_n(z)$, we see that \eqref{muassumptionlim} means $\tilde\mu_n\to \tilde\mu$ in total variation norm.  A natural weaker assumption is   
    $\tilde\mu_n\to\tilde \mu$ in the vague (weak-$\star$) sense which, by a version of Lévy's continuity theorem, is equivalent to pointwise convergence of the corresponding multipliers $m_n(\xi)\to m(\xi)$. The stronger convergence concept \eqref{muassumptionlim} allows  for shorter and less technical proofs, 
but it can be shown that the stability result below holds also under vague convergence of $\tilde\mu_n$. 
\end{remark}
\begin{proposition}[Stability of entropy solutions w.r.t. $\L$]\label{prop: stabilityOfEntropySolutionsWithRespectToPerturbationsInL}
 Assume \eqref{Omegaassumption}--\eqref{muassumption} and $(u_n)_{n\in \N}$ are solutions of \eqref{E} for corresponding Lévy operators $(\L_n)_{n\in \N}$, all with the same data $u_0,u^c$. If $\L_n\to \L$ in the sense of \eqref{muassumptionlim} and $(b(u_n))_{n\in \N}$ is precompact in $L^2(Q)$, then $u_n \to u$ in $L^p(Q)$ for any $p\in [1,\infty)$, where $u$ is the unique solution of \eqref{E} for the corresponding Lévy operator $\L$. 
\end{proposition}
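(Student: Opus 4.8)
The plan is to combine the nonlinear weak-$\star$ machinery of this section with the uniqueness theorem for entropy-process solutions (Theorem \ref{thm: uniquenessEntropyProcessSolutions}). Since the data $u_0,u^c$ are fixed and each $u_n$ is an entropy solution of \eqref{E2}, the maximum principle (Lemma \ref{lem: maximumsPrinciple}) gives a uniform bound $\|u_n\|_{L^\infty(M)}\le C$ depending only on the data. By Remark \ref{remark: weakConvergence}(b), after passing to a subsequence we may assume $u_n\to u$ in the nonlinear weak-$\star$ $L^\infty(M)$ sense for some $u=u(t,x,a)\in L^\infty(M\times(0,1))$. The goal is to show $u$ is an entropy-process solution of the limit problem \eqref{E}; then Theorem \ref{thm: uniquenessEntropyProcessSolutions} forces $u$ to be independent of $a$ and equal (a.e.) to the unique entropy solution of \eqref{E}, and Remark \ref{remark: weakConvergence}(c) upgrades the convergence to $L^p_{\loc}$, hence $L^p(Q)$ by boundedness of $\Omega$ and dominated convergence. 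Since the limit is unique and independent of the subsequence, the whole sequence converges.

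\textbf{Verifying the three conditions of Definition \ref{def: entropyProcessSolution}.} Condition (b) (data in $Q^c$) is immediate: each $u_n=u^c$ a.e. in $Q^c$, and this passes to the nonlinear weak-$\star$ limit (test with $\psi=\mathrm{id}$ against indicators of $Q^c$). Condition (c) (non-degeneracy: $b(u(t,x,a))$ essentially independent of $a$) is exactly where the precompactness of $(b(u_n))_{n\in\N}$ in $L^2(Q)$ enters: along a further subsequence $b(u_n)\to g$ strongly in $L^2(Q)$; but nonlinear weak-$\star$ convergence of $u_n$ with the continuous function $\psi=b$ gives $b(u_n)\rightharpoonup \int_0^1 b(u(\cdot,\cdot,a))\,\dd a$ weakly, so $g(t,x)=\int_0^1 b(u(t,x,a))\,\dd a$, and moreover strong $L^2$ convergence forces $\int_0^1 b(u)^2\dd a = g^2 = \left(\int_0^1 b(u)\dd a\right)^2$ a.e., which by Jensen (strict convexity of $r\mapsto r^2$) means $b(u(t,x,a))$ is a.e. independent of $a$. (One also uses Remark \ref{rem: defentsolnremark}(b) to handle the $\L^{\ge r}$ convolution term consistently.) Condition (a) (averaged entropy inequalities) is obtained by writing the entropy inequality \eqref{eq: entropyInequalityVovelleMethod} for each $u_n$ with operator $\L_n$, and passing to the limit term by term.

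\textbf{Passing to the limit in the entropy inequalities.} Fix $r>0$, $k\in\R$, and admissible $\varphi$. The first-order terms $\int_Q\big((u_n-k)^\pm\partial_t\varphi + F^\pm(u_n,k)\cdot\nabla\varphi\big)$ involve continuous functions of $u_n$ tested against $L^1$ functions, so they converge to their $a$-averaged counterparts by definition of nonlinear weak-$\star$ convergence. The same applies to the right-hand side (which does not depend on $u_n$ at all). For the nonlocal terms: in $-\int_M (b(u_n)-b(k))^\pm \L_n^{<r}[\varphi]$, we split $\L_n^{<r}[\varphi] = \L^{<r}[\varphi] + (\L_n^{<r}-\L^{<r})[\varphi]$; the difference is $O\!\big(\|\varphi\|_{W^{2,\infty}}\int(|z|^2\wedge 1)\dd|\mu_n-\mu|\big)\to 0$ uniformly by \eqref{eq:convL} (applied to the truncated measures), while $(b(u_n)-b(k))^\pm \L^{<r}[\varphi]$ converges by nonlinear weak-$\star$ convergence since $\L^{<r}[\varphi]\in L^1(M)$ and $r\mapsto (b(r)-b(k))^\pm$ is continuous. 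For the remaining term $-\int_Q \L_n^{\ge r}[b(u_n)]\,\sgn^\pm(u_n-k)\,\varphi$, we integrate the bounded operator $\L_n^{\ge r}$ by its symmetry onto a test function: $\int_Q \L_n^{\ge r}[b(u_n)]\sgn^\pm(u_n-k)\varphi = \int_M b(u_n)\,\L_n^{\ge r}\!\big[\sgn^\pm(u_n-k)\varphi\cha\big]$ — but since $\sgn^\pm(u_n-k)$ is not smooth, the cleaner route (as in \cite{MiVo03, CiJa11}) is to use strong $L^2(Q)$ convergence $b(u_n)\to g$ together with $\L_n^{\ge r}[b(u_n)] \to \L^{\ge r}[g]$ (again via \eqref{eq:convL} for the operator difference plus continuity of $\L^{\ge r}$ on $L^2$), and the fact that $\sgn^\pm(u_n-k)b(u_n)$-type products pass to the correct $a$-average because on the set $\{b(u)\ne b(k)\}$ the sign is determined by $b$ (non-degenerate there, so no $a$-dependence), while on $\{b(u)=b(k)\}$ the relevant contribution is controlled. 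This reassembles into the averaged inequality \eqref{eq: entropyProcessInequalityVovelleMethod} with $\cha$ appearing exactly as in Definition \ref{def: entropyProcessSolution}.

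\textbf{The main obstacle} I expect is the term $-\int_Q \L^{\ge r}_n[b(u_n)]\sgn^\pm(u_n-k)\varphi$: the factor $\sgn^\pm(u_n-k)$ is a discontinuous function of $u_n$, so nonlinear weak-$\star$ convergence does not directly apply to the product, and one must argue carefully — leaning on strong $L^2$ compactness of $b(u_n)$ and on condition (c) (that $b(u)$ is $a$-free) to show the product $\L^{\ge r}[b(u_n)]\sgn^\pm(u_n-k)$ converges weakly to the object $\L^{\ge r}[b(u)]\overline{\sgn^\pm(u-k)}$ appearing in \eqref{eq: entropyProcessInequalityVovelleMethod}. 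This is precisely the delicate point flagged in Remark \ref{rem: whyWeNeedStrongConvergenceOfBUN} and is the technical heart of the proof; everything else is bookkeeping with \eqref{eq:convL} and the definition of nonlinear weak-$\star$ convergence.
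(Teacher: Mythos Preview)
Your overall strategy is correct and matches the paper's: extract a nonlinear weak-$\star$ limit, verify it is an entropy-process solution, invoke Theorem \ref{thm: uniquenessEntropyProcessSolutions}, and conclude by uniqueness of the limit. Your treatment of conditions (b), (c), the first-order terms, the right-hand side, and the $\L^{<r}$-term is essentially the paper's argument.

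The gap is in your handling of the $\L^{\ge r}$-term. You correctly identify that the discontinuity of $\sgn^\pm$ blocks a direct passage to the limit, but your proposed workaround --- that ``on $\{b(u)\ne b(k)\}$ the sign is determined by $b$, while on $\{b(u)=b(k)\}$ the relevant contribution is controlled'' --- does not work. The factor $\L^{\ge r}[b(u)]$ is \emph{nonlocal}: it need not vanish on the degenerate set $\{b(u)=b(k)\}$, because it samples $b(u)$ at distant points. On that set $u(t,x,a)$ may genuinely depend on $a$ (whenever $b$ is flat on an interval containing $k$), so you cannot identify the weak-$\star$ limit of $\sgn^\pm(u_n-k)$ there, and the product $\L^{\ge r}[b(u)]\cdot\sgn^\pm(u-k)$ is then not controlled. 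Strong $L^2$ convergence of $\L^{\ge r}[b(u_n)]$ does reduce the problem to identifying the weak-$\star$ limit of $\sgn^\pm(u_n-k)$, but that is precisely what nonlinear weak-$\star$ convergence fails to give for a discontinuous $\psi$.

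The paper resolves this not by a pointwise argument but by \emph{regularizing in $k$}. One observes that if $(\varphi,k)$ is admissible for the $(+)$-inequality then so is $(\varphi,\xi)$ for every $\xi\ge k$ (and symmetrically for $(-)$). Averaging the entropy inequality for $u_n$ over $\xi\in[k,k+\e]$ replaces $(u_n-k)^\pm$, $F^\pm(u_n,k)$, $\sgn^\pm(u_n-k)$, and $(b(u_n)-b(k))^\pm$ by their $\e$-averages, all of which are \emph{continuous} functions of $u_n$. Now nonlinear weak-$\star$ convergence applies to every factor involving $u_n$ alone, and combined with the strong $L^2$ convergence of $\L^{\ge r}_n[b(u_n)]\to\L^{\ge r}[b(u)]$ (which you already have), the product term passes to the limit. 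This yields the $\e$-mollified process inequality; sending $\e\to0$ by dominated convergence recovers \eqref{eq: entropyProcessInequalityVovelleMethod}. This $k$-averaging trick is the missing concrete ingredient.
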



\begin{proof}
 Note first that the sequence $(u_n)_{n\in \N}$ is uniformly bounded in $L^\infty(Q)$ because of Lemma \ref{lem: maximumsPrinciple} and the fact that $u_0$ and $u^c$ are independent of $n$. By Remark \ref{remark: weakConvergence} and the precompactness of $(b(u_n))_{n}\subset L^2(Q)$ there is a subsequence, again denoted by $(u_n)_{n\in \N}$, which converges in nonlinear weak-$\star$ sense to some $u=u(x,t,a)$ and such that $b(u_n)$ converges strongly. Since $b$ is continuous, the compatibility of these two convergences imply that $b(u)$ is  a.e.  
independent of $a\in (0,1)$ and 
\begin{equation}\label{eq: strongL2Convergence}
    \lim_{n\to\infty}\|b(u_n(\cdot,\cdot))- b(u(\cdot,\cdot,a))\|_{L^2(Q)}=0, \quad \text{for a.e. }a\in(0,1).
\end{equation}
Now, suppose $u$ is an entropy-process solution of \eqref{E}. Then it follows from Theorem \ref{thm: uniquenessEntropyProcessSolutions} that $u$ is  a.e.  
independent of $a\in (0,1)$, and so by canonically associating $u$ with $\int_{0}^1u\,\dd a$ we get both that $u_n\to u$ in $L^p(Q)$ for $p\in[1,\infty)$ (see Remark \ref{remark: weakConvergence}) and that $u$ is the unique entropy solution of \eqref{E}. Finally, because this argument can be carried out for any subsequence of \textit{the original} sequence $(u_n)_{n\in\N}$ it follows by uniqueness of the limit that the original sequence converges as well. Thus, the proof is complete if we can show that $u$ is an entropy-process solution of \eqref{E}, which we will now do.

Let $k\in\R$, $r>0$, and $0\leq \varphi\in C^\infty_c(M)$ satisfy
 \begin{align}\label{eq: conditionKPhiForExistenceOfProcessSolution}
     (b(u^c)-b(k))^\pm\varphi=0,\qquad\text{a.e. in $Q^c$.}
 \end{align} 
Since $u_n$ is an entropy solution of \eqref{E2}, we have
\begin{equation}\label{eq: entropyFormulationForCensoredLevyOperator}
    \begin{split}
        &\,-\int_Q\Big( (u_n-k)^\pm\partial_t\varphi + F^\pm(u_n,k)\cdot\nabla\varphi\Big)\dd x \dd t\\
        &\,- \int_Q \sgn^{\pm}(u_n-k)\L_n^{\geq r}[b(u_n)]\varphi \dd x\dd t - \int_M(b(u_n)-b(k))^\pm\L_n^{<r}[\varphi]\dd x\dd t\\
    \leq &\, \int_{\Omega}(u_0-k)^{\pm}\varphi(0,\cdot)\dd x + L_f\int_{\Gamma}(\overline{u}^c - k)^{\pm}\varphi \dd \sigma(x)\dd t.
    \end{split}
\end{equation}
  We now let $n\to\infty$ in \eqref{eq: entropyFormulationForCensoredLevyOperator}. Note that \eqref{eq: conditionKPhiForExistenceOfProcessSolution} and the two last terms in \eqref{eq: entropyFormulationForCensoredLevyOperator}  are independent of $n$. Also, the first integral in \eqref{eq: entropyFormulationForCensoredLevyOperator} converges to the corresponding one in \eqref{eq: entropyProcessInequalityVovelleMethod} by nonlinear weak-$\star$ convergence of $u_n$ since $(r-k)^\pm$ and $F^\pm(r,k)$ 
are continuous. 
The third integral in \eqref{eq: entropyFormulationForCensoredLevyOperator} converges to $$-\int_0^1\int_M(b(u)-b(k))^\pm\L^{<r}[\varphi]\dd x\dd t\dd a$$ because of the strong $L^2$-convergence of both $b(u_n)$ and $\L^{< r}_n[\varphi]$ from \eqref{eq: strongL2Convergence}, \eqref{muassumptionlim}, and \eqref{eq:convL}.

It remains to establish the limit of $\int_Q \sgn^{\pm}(u_n-k)\L_n^{\geq r}[b(u_n)]\varphi \dd x\dd t$. For a moment we think of $\sgn^\pm$ as a continuous function. By adding and subtracting terms and applying the triangle inequality,
\begin{equation}\label{eq:ProblematicNonlocalTermInNAndEpsilon}
\begin{split}
&\bigg|\int_M \sgn^{\pm}(u_n-k)\L_n^{\geq r}[b(u_n)]\varphi\cha \dd x\dd t-\int_0^1\int_M \sgn^{\pm}(u-k)\L^{\geq r}[b(u)]\varphi\cha \dd x\dd t\dd a\bigg|\\
&\leq \, \int_M|\sgn^{\pm}(u_n-k)|\times\\
&\qquad\quad\times\Big( \Big|\L_n^{\geq r}[b(u_n)]-\L^{\geq r}[b(u_n)]\Big|+\Big|\L^{\geq r}[b(u_n)]-\int_0^1\L^{\geq r}[b(u)]\dd a\Big|\Big)\varphi\cha \dd x\dd t\\
&\quad+\bigg|\int_0^1\int_M\big(\sgn^\pm(u_n-k)-\sgn^\pm(u-k)\big)\L^{\geq r}[b(u)]\varphi\cha\dd x \dd t\bigg|.
\end{split}
\end{equation}
The last term goes to zero as $n\to\infty$ by nonlinear weak-$\star$ convergence since $\L^{\geq r}[b(u)]\varphi\cha\in L^1(M).
$ By the Cauchy-Schwarz inequality, and the inequality\footnote{This inequality follows from the triangle and Minkowski integral inequalities (cf. the proof of Lemma 2.1 in \cite{EJ21}).}
$$\|(\L_n^{\geq r}-\L^{\geq r})[\psi]\|_{L^2}\leq 2\|\psi\|_{L^2} \int_{|z|\geq r}\dd|\mu_n-\mu|(z),$$
the other term is bounded by
\begin{align*}
&\Big(\big\|\big(\L_n^{\geq r}[b(u_n)]-\L^{\geq r}[b(u_n)]\big)\big\|_{L^2(M)}+\Big\|\L^{\geq r}\Big[b(u_n)-\int_0^1b(u)\dd a\Big]\Big\|_{L^2(M)}\Big)\|\varphi\|_{L^\infty}\norm{\cha}{L^2(M)}\\
&\leq \, 2\|\varphi\|_{L^\infty}\norm{\cha}{L^2}\bigg(\|b(u_n)\|_{L^2}\int_{|z|\geq r}\dd|\mu_n-\mu|(z) +\Big\|b(u_n)-\int_0^1b(u)\dd a\Big\|_{L^2}\int_{|z|\geq r}\dd\mu_\alpha(z)\bigg),
\end{align*}
where $\norm{\cha}{L^2(M)}=(T|\Omega|)^{\frac{1}{2}}$. 
These terms converge to zero by \eqref{muassumptionlim} and since $b(u_n)$ converges (and is bounded) in $L^2$. 

Since $\sgn^\pm(\cdot-k)$ is not a continuous function, we need an approximation argument. Note first that if the $\pm$ entropy inequalities \eqref{eq: entropyFormulationForCensoredLevyOperator} is satisfied for $(\varphi,k)$ satisfying \eqref{eq: conditionKPhiForExistenceOfProcessSolution}, then it also satisfied for $(\varphi,\xi)$ for any $\pm\xi\geq\pm k$.
Hence integrating from $\xi=k$ to $\xi=k+\e$ in the $(+)$ case and $\xi=k-\e$ to $\xi=k$ in the $(-)$ case and dividing by $\e$, we find that
 \begin{equation}\label{eq: mollifiedEntropyFormulationForCensoredLevyOperator}
    \begin{split}
        &\,-\int_Q (u_n-k)_\e^\pm\partial_t\varphi + F^\pm_\e(u_n,k)\nabla\varphi\dd x \dd t\\
        &\,- \int_Q \sgn^{\pm}_\e(u_n-k) \L_n^{\geq r}[b(u_n)]\varphi \dd x\dd t - \int_M(b(u_n)-b(k))_\e^\pm\L_n^{<r}[\varphi]\dd x\dd t\\
    \leq &\, \int_{\Omega}(u_0-k)_\e^{\pm}\varphi(0,\cdot)\dd x + L_f\int_{\Gamma}(\overline{u}^c - k)_\e^{\pm}\varphi \dd \sigma(x)\dd t,
    \end{split}
\end{equation}
where 
 $g_\e^+(k)= \frac{1}{\e}\int_k^{k+\e}g^+(\xi)\dd\xi$ and
 $g_\e^-(k)= \frac{1}{\e}\int_{k-\varepsilon}^{k}g^-(\xi)\dd\xi$ for $g^\pm(k)=(a-k)^\pm$, $\sgn^{\pm}(a-k)$, and $F^\pm(a,k)$.
Note that these functions are all continuous.
Fixing $\varepsilon>0$, we can now argue as above and send $n\to\infty$ in \eqref{eq: mollifiedEntropyFormulationForCensoredLevyOperator} to get
 \begin{equation*}
    \begin{split}
        &\,-\int_0^1\int_Q (u-k)^\pm_\e\partial_t\varphi + F^\pm_\e(u,k)\nabla\varphi\dd x \dd t\dd a\\
        &\,- \int_0^1\int_Q \sgn^{\pm}_\e(u-k) \L^{\geq r}[b(u)]\varphi \dd x\dd t\dd a - \int_0^1\int_M(b(u)-b(k))_\e^\pm\L^{<r}[\varphi]\dd x\dd t\dd a\\
    \leq &\, \int_{\Omega}(u_0-k)^{\pm}_\e\varphi(0,\cdot)\dd x + L_f\int_{\Gamma}(\overline{u}^c - k)^{\pm}_\e\varphi \dd \sigma(x)\dd t.
    \end{split}
\end{equation*}
Finally, we send $\e\to0$ to get inequality \eqref{eq: entropyProcessInequalityVovelleMethod}, using the dominated convergence theorem  and the pointwise convergence of $g_\e^\pm\to g^\pm$ for $g^\pm=(a-\cdot)^\pm$, $\sgn^{\pm}(a-\cdot)$, and $F^\pm(a,\cdot)$. Thus, $u$ is an entropy-process solution of \eqref{E} which, as noted at the beginning of the proof, implies the proposition.
\end{proof}
  \begin{remark}\label{rem: whyWeNeedStrongConvergenceOfBUN}
       The reason strong convergence of $(b(u_n))_{n\in \N}\subset L^2(Q)$ is needed in the previous proof, is to ensure  that in the limit we can replace $u_n$ by $u$ in        
       the nonsingular nonlocal term
\begin{equation}\label{eq: theProblemWithTheNonsingularPartOfTheDefinition}
   \int_Q \big(b(u_n)\ast \mu_r\big)\sgn^\pm(u_n-k)\varphi\dd x\dd t -\mu_r(\R^d)\int_Q b(u_n)\sgn^\pm(u_n-k)\varphi\dd x\dd t,
\end{equation}
here written in the style of Remark \ref{rem: defentsolnremark} (b). Ignoring that $\sgn^{\pm}$ is discontinuous (cf. the previous proof), 
the nonlinear weak-$\star$ convergence of $u_n$ assigns a value to the second part of \eqref{eq: theProblemWithTheNonsingularPartOfTheDefinition}, but \textit{not} to the first part which  (by Fubini) contains terms of the type $b(u_n(x-z))\sgn^{\pm}(u_n(x)-k)$. 
To illustrate the problem, consider $g_n(\theta)=\sin(n\theta)$ which, as $n\to\infty$, admits the nonlinear weak-$\star$ limit $g(\theta,a)=2a-1$, yet the product $g_n(\theta)g_n(\theta + \pi) = (-1)^n\sin(n\pi)^2$ admits no weak limit. 
  \end{remark}


\section{From energy estimates to translation regularity
}\label{sec: precompactnessOfB(u_n)}

As in the previous section, we let $\L_n$ be an approximation of $\L$ satisfying \eqref{muassumptionlim} and $u_n$ the corresponding entropy solution of the Dirichlet problem \eqref{E2} (data is independent of $n$). We will investigate what conditions on $(\L_n)_{n\in\N}$ ensures precompactness of the diffusive fluxes $(b(u_n))_{n\in\N}$ which is needed to apply Proposition \ref{prop: stabilityOfEntropySolutionsWithRespectToPerturbationsInL}. For technical convenience, we extend these functions to $\R^{1+d}$: More precisely, we let $(\gamma_n)_{n\in\N}\in L^2(\R^{1+d})$ be defined by
\begin{equation}\label{eq:DefinitionExtensionB}
\gamma_n(t,x)\coloneqq
\begin{cases}
b(u_n(t,x))-b(\overline{u}^c(t,x))\qquad&\text{a.e. in} \quad (0,T)\times\R^d,\\
0, \qquad &\text{a.e. in} \quad (0,T)^c\times\R^d.
\end{cases}
\end{equation}
 Note that $(b(u_n))_{n\in\N}$ is precompact in $L^2(Q)$ if, and only if, $(\gamma_n)_{n\in\N}$ is precompact in $L^2(\R^{1+d})$. And since $(\gamma_n)_{n\in\N}$ is uniformly $L^\infty$-bounded (by Lemma \ref{lem: maximumsPrinciple}) and each $\gamma_n$ is supported in $Q$, it follows from Kolmogorov's compactness theorem that $(\gamma_n)_{n\in\N}$ is precompact if it is $L^2$-equicontinuous with respect to translations. Furthermore, it suffices that $(\gamma_n)_{n\in\N}$ is equicontinuous with respect to \textit{spatial}-translations: 

\begin{proposition}
\label{prop: fromSpatialTranslationContinuityToTemporalTranslationContinuity}
Assume \eqref{Omegaassumption}--\eqref{muassumption},  \eqref{muassumptionlim}, $u_n$ is the entropy solution of \eqref{E2}, 
and $\gamma_n$ is defined in \eqref{eq:DefinitionExtensionB}. If $(\gamma_n)_n$ is $L^2$-equicontinuous with respect to space-translations
\begin{equation}\label{eq:SpaceTranslations}
\sup_n\|\gamma_n(\cdot,\cdot+h)-\gamma_n(\cdot,\cdot)\|_{L^2(\R^{1+d})}\to 0\qquad\text{as}\qquad h\to0,
\end{equation}
then it is also $L^2$-equicontinuous with respect to time-translations
\begin{equation}\label{eq: timeTranslationContinuity}
\sup_n\|\gamma_n(\cdot+\tau,\cdot)-\gamma_n(\cdot,\cdot)\|_{L^2(\R^{1+d})}\to 0\qquad\text{as}\qquad \tau\to0,
\end{equation}
and, consequently, $(b(u_n))_{n\in\N}$ is precompact in $L^2(Q)$.
\end{proposition}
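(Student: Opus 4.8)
The statement says: spatial $L^2$-equicontinuity of $(\gamma_n)_n$ implies temporal $L^2$-equicontinuity, and hence precompactness of $(b(u_n))_n$ in $L^2(Q)$. The last implication is immediate from Kolmogorov's theorem as already noted in the text, so the heart of the matter is the "space-to-time" transfer. The guiding principle — announced in the abstract as "transferring weak regularity from $\partial_t u_n$ to $\partial_t b(u_n)$" — is that the PDE gives control on $\partial_t u_n$ in a weak (negative) norm, and one must upgrade this to control on $\partial_t b(u_n)$ in a form usable for the Kolmogorov criterion. I would follow the classical Kruzkov-type time-translation estimate adapted to this setting.

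\textbf{Step 1: a weak bound on $\partial_t u_n$.}  From the entropy inequalities (taking both $\pm$ and $k$ outside the range of $u_n$, or more directly using that an entropy solution is a distributional solution of the equation), $u_n$ solves $\partial_t u_n = -\mathrm{div}(f(u_n)) + \L_n[b(u_n)]$ in $\mathcal D'(Q)$. Testing against $\psi(t)\phi(x)$ and using \eqref{fassumption}, the energy estimate (Proposition~\ref{prop: finiteEnergy} together with Remark~\ref{remark: energyEstimateImpliesLocalBoundedEnergyOfBU}, which provides $\int_M \B[b(u_n),b(u_n)]\,\dd x\dd t \le C$ uniformly in $n$ — here one checks the finiteness hypothesis of that remark for the approximating operators, using \eqref{muassumptionlim} to get a uniform bound), the map $t\mapsto \int_\Omega u_n(t,x)\phi(x)\,\dd x$ has a uniformly (in $n$) bounded "weak $W^{1,1}_t$" modulus: for smooth compactly supported $\phi$,
\[
\Big|\int_\Omega \big(u_n(t+\tau,x)-u_n(t,x)\big)\phi(x)\,\dd x\Big| \le C(\phi)\,\tau,
\]
where $C(\phi)$ depends on $\|f\|$, the energy bound, $M = \sup_n \int (|z|^2\wedge 1)\dd\mu_n$, and $\|\phi\|_{W^{2,1}}$ (the nonlocal term is estimated by pairing $\L_n[b(u_n)]$ against $\phi$, i.e.\ moving the operator onto $\phi$ and using the $L^\infty$-bound on $b(u_n)$ or, better, the Cauchy--Schwarz/energy route to keep constants uniform).

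\textbf{Step 2: mollify in space and interpolate.}  The standard device (Kruzkov, and as in \cite{MiVo03}): write $\gamma_n(t+\tau)-\gamma_n(t)$, insert a spatial mollifier $\rho_\delta$, and split
\[
\|\gamma_n(\cdot+\tau,\cdot)-\gamma_n(\cdot,\cdot)\|_{L^2}
\le 2\sup_{|h|\le\delta}\|\gamma_n(\cdot,\cdot+h)-\gamma_n(\cdot,\cdot)\|_{L^2}
+ \|(\gamma_n(\cdot+\tau)-\gamma_n(\cdot))\ast_x\rho_\delta\|_{L^2}.
\]
The first term is controlled by the hypothesis \eqref{eq:SpaceTranslations}, uniformly in $n$ and $\tau$. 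For the second term, since $\gamma_n = b(u_n)-b(\bu)$ on $(0,T)\times\R^d$ with $b$ globally Lipschitz and $\bu$ smooth, one has pointwise $|\gamma_n(t+\tau,x)-\gamma_n(t,x)| \le L_b |u_n(t+\tau,x)-u_n(t,x)| + |b(\bu(t+\tau,x))-b(\bu(t,x))|$; the second piece is $O(\tau)$ by smoothness of $\bu$. For the first piece, using that $b$ is Lipschitz and $\gamma_n, u_n$ are uniformly $L^\infty$-bounded, $\int |b(u_n(t+\tau))-b(u_n(t))|^2 \le C\int |b(u_n(t+\tau))-b(u_n(t))| = C\int (b(u_n(t+\tau))-b(u_n(t)))\,\sgn(\cdots)$; convolving with $\rho_\delta$ and pairing, this reduces to controlling $\int_\Omega (u_n(t+\tau,x)-u_n(t,x))\,\phi(x)\,\dd x$ with $\phi$ a smooth function built from $\rho_\delta$ and a sign (which must itself be mollified, exactly as in the proof of Proposition~\ref{prop: stabilityOfEntropySolutionsWithRespectToPerturbationsInL} — an auxiliary $\e$-regularization of $\sgn$, then $\e\to 0$). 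Applying Step~1 gives a bound $\le C(\delta)\,\tau$, with $C(\delta)$ blowing up as $\delta\to 0$ (like $\delta^{-1}$ from two spatial derivatives hitting $\rho_\delta$, or $\delta^{-2}$ if the nonlocal term is handled crudely — the precise power does not matter).

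\textbf{Step 3: optimize.}  We obtain $\sup_n\|\gamma_n(\cdot+\tau,\cdot)-\gamma_n(\cdot,\cdot)\|_{L^2} \le \omega(\delta) + C(\delta)\tau$ where $\omega(\delta)\to 0$ as $\delta\to 0$ by \eqref{eq:SpaceTranslations} and $C(\delta)<\infty$ for each fixed $\delta$. Given $\eta>0$, pick $\delta$ with $\omega(\delta)<\eta/2$, then $\tau$ small enough that $C(\delta)\tau<\eta/2$; this proves \eqref{eq: timeTranslationContinuity}. Combined with \eqref{eq:SpaceTranslations}, $(\gamma_n)_n$ is $L^2$-equicontinuous in all variables, uniformly $L^\infty$-bounded, and supported in the fixed set $\overline Q$, so Kolmogorov's compactness theorem yields precompactness of $(\gamma_n)_n$ in $L^2(\R^{1+d})$, equivalently of $(b(u_n))_n$ in $L^2(Q)$.

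\textbf{Main obstacle.}  The delicate point is keeping all constants in Step~1 \emph{uniform in $n$} despite the operators $\L_n$ and the approximation \eqref{muassumptionlim}: one must route the nonlocal term through the uniform energy bound (Proposition~\ref{prop: finiteEnergy} / Remark~\ref{remark: energyEstimateImpliesLocalBoundedEnergyOfBU}) rather than through a naive $\|\L_n\|$-type bound, which would not be uniform, and one must verify the finiteness hypothesis in that remark holds uniformly for the family $(\L_n)$. The second nuisance, as in the stability proof, is that the natural test function involves $\sgn(u_n(t,x)-k)$ which is discontinuous and $x$-dependent through $u_n$; handling the time-difference pairing against such a function requires the same mollify-then-pass-to-the-limit argument used for \eqref{eq: mollifiedEntropyFormulationForCensoredLevyOperator}, and care that the entropy inequalities (not an equation) are what is actually available — so one works with $(u_n-k)^\pm$ and the semi-Kruzkov fluxes and sums the $\pm$ contributions, rather than with $u_n$ directly, whenever the boundary term $L_f\int_\Gamma(\bu-k)^\pm\varphi$ must be absorbed.
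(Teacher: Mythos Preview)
Your overall strategy---mollify in space, extract a $\partial_t$-bound from the weak formulation, interpolate, optimize---is the right one and matches the paper. But there is a genuine gap in Step~2, at precisely the point the abstract flags as novel (``transferring weak regularity from $\partial_t u_n$ to $\partial_t b(u_n)$'').

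You mollify $\gamma_n$ (equivalently $b(u_n)$) in space. This makes the error term $\|\gamma_n - \gamma_n\ast\rho_\delta\|_{L^2}$ easy, but it leaves you with $\|(b(u_n(\cdot+\tau))-b(u_n))\ast\rho_\delta\|_{L^2}$, and here your argument breaks down. The weak formulation only controls $\int (u_n(t+\tau)-u_n(t))\phi\,\dd x$ for smooth $\phi$; it says nothing about $\int(b(u_n(t+\tau))-b(u_n(t)))\phi\,\dd x$. Your proposed bridge---use $|b(u_n(t+\tau))-b(u_n(t))|\le L_b|u_n(t+\tau)-u_n(t)|$, write the absolute value via $\sgn$, then mollify the sign---does not close: replacing $\sgn(u_n(t+\tau)-u_n(t))$ by its $\rho_\delta$-mollification produces an error $\int(u_n(t+\tau)-u_n(t))[\sgn(\cdots)-\sgn(\cdots)\ast\rho_\delta]$ whose control requires spatial translation regularity of $u_n$ itself, which is \emph{not} assumed (only $b(u_n)$, via $\gamma_n$, is). Alternatively, if you simply drop $\ast\rho_\delta$ via Young's inequality you are back to bounding $\|\gamma_n(\cdot+\tau)-\gamma_n\|_{L^2}$ and the argument is circular.

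The paper resolves this by mollifying $u_n$ instead: set $u_{n,\varepsilon}=u_n\ast_x\rho_\varepsilon$, so that $\partial_t u_{n,\varepsilon}$ is directly controlled by the weak formulation (crudely: $\|\L_n[\rho_\varepsilon]\|_{L^1}\lesssim \varepsilon^{-2}$ uniformly in $n$ via \eqref{muassumptionlim}---no energy estimate needed here, contrary to your ``Main obstacle''). Then $\|(S_\tau-1)b(u_{n,\varepsilon})\|_{L^2}\le L_b\|u_n\|_\infty^{1/2}\|(S_\tau-1)u_{n,\varepsilon}\|_{L^1}^{1/2}\lesssim \tau^{1/2}\varepsilon^{-1}$. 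The price is that the error term is now $\|b(u_n\ast\rho_\varepsilon)-b(u_n)\|_{L^2}$, which is \emph{not} $\|b(u_n)\ast\rho_\varepsilon - b(u_n)\|_{L^2}$ and hence not directly controlled by translates of $b(u_n)$. This is where the paper's Lemma~\ref{lem:e-t-reg} (itself resting on the elementary Lemma~\ref{lem: theAverageAlmostCommuteWithVShapedFunction}) enters: it gives the pointwise bound
\[
|b(u_n\ast\rho_\varepsilon)-b(u_n)|^2 \le C\,|b(u_n(t,\cdot))-b(u_n(t,x))|\ast\rho_\varepsilon(x),
\]
which after integration reduces to $\sup_{|h|\le\varepsilon}\|b(u_n(\cdot,\cdot+h))-b(u_n)\|_{L^1}^{1/2}$, and this \emph{is} controlled by the hypothesis. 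Setting $\varepsilon=\tau^{1/3}$ finishes. This commutation-type lemma is the missing ingredient in your proposal.
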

 This is relatively classical type of result that can be 
proved through a mollification argument.  Let  
$\rho_\varepsilon(x)=\frac1{\e^d}\rho(\frac x\e)$, where $0\leq \rho\in C_c^\infty(\R^d)$ is supported in $B_1(0)$ and integrates to one,  and set 
\begin{equation*}
u_{n,\varepsilon}(t,x)\coloneqq(u_{n}(t,\cdot)\ast\rho_\varepsilon)(x)= \int_{\R^d}u_{n}(t,y)\rho_\varepsilon(x-y)\dd y.
\end{equation*} 
 Uniform (in $n\in\N$) control on $\partial_t u_{n,\varepsilon}$  implies uniform time-translation regularity of $b(u_n)$ if we have  uniform control on the difference $b(u_{n,\varepsilon})-b(u_{n})$.  Spatial translation estimates for $b(u_n)$ 
provides control on the difference $b(u_n)\ast \rho_\varepsilon - b(u_n)$; this is a standard argument. But the difference of interest here is instead $b(u_n\ast \rho_\varepsilon) - b(u_n)$ and, thus, it is not immediately clear how controlling the translates in $b(u_n)$ can help us. This is resolved by the following lemma.

\begin{lemma}\label{lem:e-t-reg}
Assume \eqref{Omegaassumption}--\eqref{muassumption}, \eqref{muassumptionlim}, $\e>0$, $u_n$ is the entropy solution of \eqref{E2}, and $\rho_\varepsilon$ is the mollifier from before. Then we have the pointwise bound
\begin{equation}\label{eq:SquarePointwiseControl}
\begin{split}
\big|b\big(u_{n}(t,\cdot)\ast\rho_\varepsilon(x)\big)-b\big(u_n(t,x)\big)\big|^2
\leq &\, C\big|b\big(u_n(t,\cdot)\big)-b\big(u_n(t,x)\big)\big|\ast\rho_\varepsilon(x),
\end{split}
\end{equation}
for a.e. $t\in(0,T)$ and a.e. $x\in\Omega_{-\e}$, where $C=2L_b(\norm{u_0}{L^\infty(\Omega)}\vee\norm{u^c}{L^\infty(Q^c)})$.
\end{lemma}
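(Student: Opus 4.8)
The plan is to deduce the pointwise estimate \eqref{eq:SquarePointwiseControl} from \emph{Jensen's inequality} applied to a well-chosen convex function, using only that $b$ is nondecreasing and $L_b$-Lipschitz. Fix a.e.\ $t\in(0,T)$ and a.e.\ $x\in\Omega_{-\e}$ and abbreviate $v:=u_n(t,\cdot)$, $m:=v(x)$, and $\bar v:=u_n(t,\cdot)\ast\rho_\e(x)=\int_{\R^d}v(y)\dd\nu(y)$, where $\dd\nu(y):=\rho_\e(x-y)\dd y$ is a Borel probability measure supported in $B_\e(x)$; with this notation the right-hand side of \eqref{eq:SquarePointwiseControl} is $C\int|b(v(y))-b(m)|\dd\nu(y)$. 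Introduce the primitive $\Phi(s):=\int_m^s\big(b(r)-b(m)\big)\dd r$, i.e.\ $\Phi(s)=H(s,m)$ with $H$ as in Proposition~\ref{prop: finiteEnergy}. Since $b$ is nondecreasing, $\Phi'=b(\cdot)-b(m)$ is nondecreasing, hence $\Phi$ is convex, $\Phi\geq0$ and $\Phi(m)=0$. By the maximum principle (Lemma~\ref{lem: maximumsPrinciple}), $u_n(t,\cdot)$ takes values, a.e., in an interval $[m_-,m_+]$ with $m_+-m_-\leq 2\big(\|u_0\|_{L^\infty(\Omega)}\vee\|u^c\|_{L^\infty(Q^c)}\big)$; in particular $|v(y)-m|\leq m_+-m_-$ for a.e.\ $y$.

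The argument then reduces to three elementary estimates. \emph{(1) Jensen:} $\Phi(\bar v)=\Phi\big(\int v\dd\nu\big)\leq\int\Phi(v(y))\dd\nu(y)$. \emph{(2) Upper bound for $\Phi(v(y))$:} monotonicity of $b$ gives $|b(r)-b(m)|\leq|b(v(y))-b(m)|$ for every $r$ between $m$ and $v(y)$, whence $\Phi(v(y))\leq|v(y)-m|\,|b(v(y))-b(m)|\leq(m_+-m_-)\,|b(v(y))-b(m)|$. \emph{(3) Lower bound for $\Phi(\bar v)$:} set $X:=|b(\bar v)-b(m)|$; the Lipschitz bound together with monotonicity yields $|b(r)-b(m)|\geq\big(X-L_b|\bar v-r|\big)^+$ for $r$ between $m$ and $\bar v$, and since $\Phi(\bar v)=\int_{[m\wedge\bar v,\,m\vee\bar v]}|b(r)-b(m)|\dd r$ and $X\leq L_b|\bar v-m|$ (so the sub-interval adjacent to $\bar v$ on which this bound is positive, of length $X/L_b$, is contained in $[m\wedge\bar v,\,m\vee\bar v]$), integration gives $\Phi(\bar v)\geq\int_0^{X/L_b}\big(X-L_b s\big)\dd s=\tfrac{X^2}{2L_b}$. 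Chaining (3), (1), (2): $\tfrac{X^2}{2L_b}\leq(m_+-m_-)\int|b(v(y))-b(m)|\dd\nu(y)$, which is \eqref{eq:SquarePointwiseControl} with a constant of the stated form $C\sim L_b\big(\|u_0\|_{L^\infty(\Omega)}\vee\|u^c\|_{L^\infty(Q^c)}\big)$.

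The delicate point is estimate~(3): it converts the \emph{integrated} quantity $\Phi(\bar v)$ back into the \emph{squared pointwise} quantity $|b(\bar v)-b(m)|^2$, and this is exactly where the Lipschitz regularity of $b$ is essential --- for merely continuous $b$ one can make $\Phi(\bar v)/|b(\bar v)-b(m)|^2$ arbitrarily small (e.g.\ if $b$ has a jump), and the lemma genuinely fails. Estimate~(2) is where the $L^\infty$-bound enters, supplied by Lemma~\ref{lem: maximumsPrinciple}. Note that \emph{no convexity of $b$} is used, only monotonicity, so the argument is insensitive to degenerate regions where $b'=0$ (if $b$ is constant on the relevant range of $u_n$, both sides vanish, using that $\bar v$ lies in that range). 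Finally, the case $b(\bar v)=b(m)$ is trivial, and the restriction $x\in\Omega_{-\e}$ plays no role in this pointwise bound --- it is imposed only so that, when \eqref{eq:SquarePointwiseControl} is later integrated in $x$, the mollification involves exclusively interior values of $u_n$ and the right-hand side is controlled by the spatial-translation estimates of $b(u_n)$.
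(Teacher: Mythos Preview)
Your proof is correct and takes a genuinely different route from the paper's. The paper reduces to an abstract inequality (Lemma~\ref{lem: theAverageAlmostCommuteWithVShapedFunction}): for a V-shaped Lipschitz function $h$ with $h(0)=0$ and a probability measure $\kappa$ on $[-R,R]$, one has $h(\bar s)^2\le LR\,\bar h$. Its proof is a direct splitting of $\int(h(\bar s)-h(s))\dd\kappa$ over $\{s<\bar s\}$ and $\{s\ge\bar s\}$, using the mean-zero property of $s-\bar s$ and the monotonicity of $h$ on each half-line; no auxiliary convex function or Jensen inequality appears. Your argument instead passes through the convex primitive $\Phi(\cdot)=H(\cdot,m)$: Jensen gives $\Phi(\bar v)\le\overline{\Phi(v)}$, monotonicity of $b$ gives $\Phi(v(y))\le(m_+-m_-)\,|b(v(y))-b(m)|$, and the Lipschitz lower bound $|b(r)-b(m)|\ge(X-L_b|\bar v-r|)^+$ yields $\Phi(\bar v)\ge X^2/(2L_b)$. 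This is arguably more conceptual---it identifies the role of the energy integrand $H$ and makes the convexity mechanism explicit---whereas the paper's argument is shorter and obtains the sharper constant $C=2L_b(\|u_0\|_{L^\infty}\vee\|u^c\|_{L^\infty})$; your chain gives $2L_b(m_+-m_-)\le 4L_b(\|u_0\|_{L^\infty}\vee\|u^c\|_{L^\infty})$, so you lose a factor~$2$ relative to the stated constant (which you acknowledge with the ``$\sim$''). For the application in Proposition~\ref{prop: fromSpatialTranslationContinuityToTemporalTranslationContinuity} the precise constant is immaterial.
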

\begin{remark}
    When  $b'\geq c>0$ we have a much simpler bound,
    \begin{align*}
        \big|b\big(u_{n}\ast\rho_\varepsilon\big)-b\big(u_n\big)\big|\leq L_b |u_n(t,\cdot) - u_n|\ast\rho_\varepsilon\leq  cL_b  |b(u_n(t,\cdot)) - b(u_n)|\ast\rho_\varepsilon.
    \end{align*}
 The  above lemma 
 allows  us to \textit{always} move $\ast\rho_\varepsilon$ out of $b$ at the 
cost of introducing a square root.
\end{remark}

\begin{proof} We will use Lemma \ref{lem: theAverageAlmostCommuteWithVShapedFunction}. Fix $n\in\N$ and $(t,x)\in (0,T)\times\Omega_{-\varepsilon}$. Define $w(y)\coloneqq u_n(t,y)-u_n(t,x)$. Note that $|w|\leq R\coloneqq  2\big(\norm{u_0}{L^\infty(\Omega)}\vee\norm{u^c}{L^\infty(Q^c)}\big)$ by the $L^\infty$-bound Lemma \ref{lem: maximumsPrinciple}. Define further the probability measure $\kappa$ on $[-R,R]$ as the $w$-pushforward measure
\begin{align*}
    \kappa (I) \coloneqq \int_{w^{-1}(I)}\rho_{\e}(x-y)\dd y\qquad\text{for Borel}\quad I\subseteq [-R,R].
\end{align*}
For any continuous function $g\colon [-R,R]\to \R$, we then get
\begin{align}\label{eq: thePushForwardIdentity}
    \int_{[-R,R]} g(s)\dd \k(s) = \int_{\R^d}g(w(y))\rho_\e(x-y)\dd y = \int_{\R^d}g(u_n(t,y)-u_n(t,x))\rho_\e(x-y)\dd y.
\end{align}
Letting $h\colon [-R,R]\to[0,\infty)$ be defined by
\begin{align*}
    h(s)\coloneqq |b(s + u_n(t,x))-b(u_n(t,x))|,
\end{align*}
we infer from \eqref{eq: thePushForwardIdentity} that
\begin{align*}
    \overline{s}\coloneqq &\, \int_{[-R,R]}s\dd\k(s) = u_{n}(t,\cdot)\ast \rho_\varepsilon (x) - u(t,x),\\
     \overline{h}\coloneqq&\, \int_{[-R,R]}h(s)\dd\k(s) =|b(u_n(t,\cdot))-b(u_n(t,x))|\ast\rho_\e(x).
\end{align*}
By Lemma \ref{lem: theAverageAlmostCommuteWithVShapedFunction} it follows that $h(\ol s)^2\leq L_bR\ol h$ which is precisely \eqref{eq:SquarePointwiseControl}.
\end{proof}

\begin{proof}[Proof of Proposition \ref{prop: fromSpatialTranslationContinuityToTemporalTranslationContinuity}]\
\medskip
\noindent \textbf{1)} \ Lemma A.1 (b) in \cite{AlEnJaMa23} holds for entropy solutions of \eqref{E2}, i.e., entropy solutions are very weak solutions: for all $\varphi\in C_c^\infty(Q)$,
\begin{equation}\label{eq:EntropySolutionsAreWeakSolutions}
-\int_Q\Big(u_n\partial_t\varphi+f(u_n)\cdot \nabla\varphi\Big)\dd x \dd t+\int_Mb(u_n)\L_n[\varphi]\dd x \dd t=0.
\end{equation}

\smallskip
\noindent \textbf{2)} \ Now we estimate time translations of $u_{n,\e}$ via the estimate for $\partial_tu_n$ given by the weak formulation. 
Roughly speaking, from \eqref{eq:EntropySolutionsAreWeakSolutions} with $y$ instead of $x$ and $\varphi(t,y)\coloneqq \chi_{(t_1,t_2)}(t)\rho_\varepsilon(x-y)$ for $t_1,t_2\in(0,T)$, we find that 
\begin{equation*}
u_{n,\e}(t_2,x)-u_{n,\e}(t_1,x)=\int_{t_1}^{t_2}\Big( f(u_n)*_x\nabla\rho_\varepsilon(x)-b(u_n)*_x\L_n[\rho_\varepsilon](x)\Big)\dd t\quad\text{for}\quad x\in \Omega_{-\e},
\end{equation*}
where $f(u_n)\ast \nabla\rho_\varepsilon\coloneqq \sum_{j=1}^df_j(u_n)\ast \partial_j\rho_\varepsilon$. To make this argument rigorous, we must approximate $\chi_{(t_1,t_2)}$ and then pass to the limit. The result will then hold in Lebesgue points $t_1,t_2$ of the time slices of $u_{\e}$ and hence for a.e. $t_1,t_2\in(0,T)$.\footnote{Cf. e.g. \cite[Lemma B.1]{Ali07} for more details.}
Integrating over $\Omega_{-\e}$ we get
\begin{equation*}
\int_{\Omega_{-\varepsilon}}\!|u_{n,\varepsilon}(t_2,x)-u_{n,\varepsilon}(t_1,x)|\dd x
\leq |t_2-t_1||\Omega|\Big(\norm{f(u_n)}{L^\infty(Q)}\norm{\nabla \rho_\e}{L^1(\R^d)}+\norm{b(u_n)}{L^\infty(M)}\norm{\L[\rho_\varepsilon]}{L^1(\R^d)}
\Big).
\end{equation*}
Since by standard estimates, 
$\norm{\nabla\rho_\varepsilon}{L^1(\R^d)}=\varepsilon^{-1}\norm{\nabla\rho}{L^1(\R^d)}$ and
\begin{equation*}
\norm{\L[\rho_\varepsilon]}{L^1(\R^d)}\leq \frac12\norm{D^2\rho}{L^1(\R^d)}\frac1{\e^2}\int_{|z|<1}|z|^2\dd\mu_n(z) + 2\|\rho\|_{L^1(\R^d)}\int_{|z|\geq1}\dd\mu(z),
\end{equation*}
this means that for a.e.  $t,t+\tau\in(0,T)$,
\begin{equation}
\label{eq:TimeTranslationForMollifiedSolutionN}
\sup_{n\in\N}\int_{\Omega_{-\varepsilon}}|u_{n,\varepsilon}(t+\tau,x)-u_{n,\varepsilon}(t,x)|\dd x\leq C|\tau|\big(1+\varepsilon^{-1}+\varepsilon^{-2}\big),
\end{equation}
where
$C:= \sup_{n\in\N}2|\Omega|\int_{\R^d}\big(|z|^2\wedge1\big)\dd \mu(z)\norm{\rho}{W^{2,1}}(\norm{f(u_n)}{L^\infty}+\norm{b(u_n)}{L^\infty}$ $+\norm{b(u^c)}{L^\infty})<\infty$ by assumptions and Lemma \ref{lem: maximumsPrinciple}.

\smallskip
\noindent \textbf{3)} \ We now prove the $L^2$ time translation result \eqref{eq: timeTranslationContinuity} for $\gamma_n$. By symmetry, it suffices to consider $0<\tau<T$. 
Divide the domain into $Q_{ -\tau, -\varepsilon}\coloneqq (0,T-\tau)\times \Omega_{-\varepsilon}$ and $Q_{-\tau,-\varepsilon}^c=(\R\times \R^d)\setminus Q_{-\tau,-\varepsilon}$, and for effective notation we define the time-shift operator $S_\tau\colon  g(t)\mapsto g(t+\tau)$. We first split $\|\gamma(\cdot+\tau,\cdot)-\gamma_n\|_{L^2(\R^{1+d})}=\|(S_\tau-1)\gamma_n\|_{L^2(\R^{1+d})}$ into
\begin{equation}\label{eq: splittingTheGammaDifferencesInTwo}
\begin{split}
&\|(S_\tau-1)\gamma_n\|_{L^2(\R^{1+d})}\leq  \|(S_\tau-1)\gamma_n\|_{L^2(Q_{-\tau,-\varepsilon})}+\|(S_\tau-1)\gamma_n\|_{L^2(Q_{-\tau,-\varepsilon}^c)}.
\end{split}   
\end{equation}
The part on $Q^c_{-\tau,-\varepsilon}$ is easily bounded by using $\supp \gamma_n \subseteq Q$, \eqref{Omegaassumption}, \eqref{u^cassumption}, and Lemma \ref{lem: maximumsPrinciple},
\begin{align*}
    \|(S_\tau-1)\gamma_n\|_{L^2(Q_{-\tau,-\varepsilon}^c)}\leq&\,\|S_\tau\gamma_n\|_{L^2(Q_{-\tau,-\varepsilon}^c)}+\|\gamma_n\|_{L^2(Q_{-\tau,-\varepsilon}^c)}
    \leq 2\|\gamma_n\|_{L^\infty(Q)}|Q\setminus Q_{-\tau,-\varepsilon}|\lesssim \tau + \varepsilon,
\end{align*}
for some implicit constant independent of $n$. As for the part on $Q_{-\tau,-\varepsilon}$, we first estimate
\begin{align*}
&\|(S_\tau-1)\gamma_n\|_{L^2(Q_{-\tau,-\varepsilon})} \lesssim \|(S_\tau-1)b(u_n)\|_{L^2(Q_{-\tau,-\varepsilon})}+\tau,
\end{align*}
by the regularity of $\bu$. We then add/subtract $(S_\tau-1) b(u_{n,\varepsilon})$ to the remaining term and get
\begin{align*}
\|&\,(S_\tau-1)b(u_n)\|_{L^2(Q_{-\tau,-\varepsilon})}\leq \|(S_\tau-1) b(u_{n,\varepsilon})\|_{L^2(Q_{-\tau,-\varepsilon})} + 2\|b(u_{n,\varepsilon})-b(u_{n})\|_{L^2(Q_{0,-\varepsilon})}.
\end{align*}
These two latter terms can be controlled: Using \eqref{eq:TimeTranslationForMollifiedSolutionN} we have 
\begin{align*}
\|(S_\tau-1) b(u_{n,\varepsilon})\|_{L^2(Q_{-\tau,-\varepsilon})}\leq&\,L_b\|u_{n}\|^{\frac12}_{L^\infty(Q)}\|(S_\tau-1)u_{n,\varepsilon}\|^{\frac12}_{L^1(Q_{-\tau,-\varepsilon})} \lesssim \tau^{\frac12}(1+\varepsilon^{-1}) ,
\end{align*}
and by Lemma \ref{lem:e-t-reg} and the assumption \eqref{eq:SpaceTranslations} we also have
\begin{align*}
\|b(u_{n,\varepsilon})-b(u_{n})\|_{L^2(Q_{0,-\varepsilon})}\leq&\,2C\Big(\int_{Q_{0,-\varepsilon}}\int_{\R^d}|b(u_n(t,y))-b(u_n(t,x))|\rho_\varepsilon(x-y)\dd y\dd x\dd t\Big)^{\frac{1}{2}}\\
\lesssim &\,\sup_{|h|\leq \varepsilon}\big\|b\big(u_n(\cdot,\cdot+h)\big)-b\big(u_n\big)\big\|_{L^1(Q_{0,-\varepsilon})}^{\frac{1}{2}}\\
\leq&\, |Q|^{\frac{1}{4}}\sup_{|h|\leq \varepsilon}\big\|b\big(u_n(\cdot,\cdot+h)\big)-b\big(u_n\big)\big\|_{L^2(Q_{0,-\varepsilon})}^{\frac{1}{2}}\\
\lesssim &\, \sup_{|h|\leq \varepsilon}\|\gamma_n(\cdot,\cdot+h)-\gamma_n\|_{L^2(Q_{0,-\varepsilon})}^{\frac{1}{2}} + \varepsilon^{\frac{1}{2}} = o_{\varepsilon}(1),
\end{align*}
for an implicit constant independent of $n$. Substituting all these estimates into \eqref{eq: splittingTheGammaDifferencesInTwo} we finally obtain
\begin{align*}
    \sup_{n}\|(S_\tau-1)\gamma_n\|_{L^2(\R^{1+d})} \lesssim \tau^{\frac{1}{2}}\varepsilon^{-1} + o_\tau(1) + o_\varepsilon(1).
\end{align*}
Setting $\varepsilon =\tau^{\frac{1}{3}}$ we obtain \eqref{eq: timeTranslationContinuity}. The last part of the proposition is evident in light of the discussion at the beginning of the section. 
\end{proof}

 In light of the previous proposition, we are lead to examine what equicontinuity with respect to spatial-translations we can derive for the family $(\gamma_n)_{n\in\N}$. Had 
\eqref{E} been  posed in  all of $\R^d$, then $L^1$-contractions results would 
 yield  such equicontinuity. Lacking this option, we instead seek to exploit the energy estimate, Proposition \ref{prop: finiteEnergy}. A first step in this direction is the following observation. 
\begin{lemma}[Uniform energy estimate]\label{lem: uniformEnergy}
Assume \eqref{Omegaassumption}--\eqref{muassumption},  \eqref{muassumptionlim}, $u_n$ is the entropy solution of \eqref{E2}, 
and $\gamma_n$ is defined in \eqref{eq:DefinitionExtensionB}.
Then the sequence $(\gamma_n)_{n\in\N}$ has uniformly bounded energy, that is  
\begin{align}\label{energy_n}
    \sup_n\int_0^T\int_{\R^d}\B_n[\gamma_n,\gamma_n]\dd x \dd t <\infty,
\end{align}
where $\B_n$ is the bilinear operator corresponding to $\L_n$, cf. \eqref{def:blform}.
\end{lemma}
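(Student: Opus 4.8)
The plan is to derive \eqref{energy_n} directly from the energy estimate of Proposition~\ref{prop: finiteEnergy}, applied to each $u_n$ with its own operator $\L_n$, and then to show that the right-hand side of \eqref{eq: finiteEnergy} is bounded \emph{uniformly} in $n$. Recall that $\gamma_n = b(u_n) - b(\bu)$ on $(0,T)\times\R^d$ (and zero otherwise), so the left-hand side of \eqref{energy_n} is exactly $\int_Q \B_n[b(u_n)-b(\bu),\,b(u_n)-b(\bu)]\,\dd x\dd t$ (using that $\gamma_n$ is supported in $(0,T)\times\R^d$ and that $\B_n[\gamma_n,\gamma_n]\geq 0$). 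Hence Proposition~\ref{prop: finiteEnergy} gives, for each $n$,
\begin{align*}
\int_0^T\!\!\int_{\R^d}\B_n[\gamma_n,\gamma_n]\,\dd x\dd t
\leq \int_\Omega H(u_0,\bu(0))\,\dd x
- \int_Q \big[(u_n-\bu)\bu_t + F(u_n,\bu)\cdot\nabla\bu\big]b'(\bu)\,\dd x\dd t
+ \int_Q \L_n[b(\bu)]\,(b(u_n)-b(\bu))\,\dd x\dd t.
\end{align*}
It remains to bound the three terms on the right uniformly in $n$.

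The first term is independent of $n$ and finite by \eqref{u^cassumption}, \eqref{u_0assumption}, and local Lipschitz continuity of $b$. For the second term, note that $u_n$ is bounded in $L^\infty(Q)$ uniformly in $n$ by Lemma~\ref{lem: maximumsPrinciple} (the data $u_0,u^c$ do not depend on $n$); since $\bu\in (C^2\cap L^\infty)([0,T]\times\R^d)$, the quantities $\bu_t$, $\nabla\bu$, and $b'(\bu)$ are bounded on the relevant compact set, and $F(u_n,\bu)$ is bounded because $f$ is (locally) Lipschitz and $u_n,\bu$ range in a fixed compact interval. So this term is bounded by $C(\norm{u_0}{L^\infty},\norm{u^c}{L^\infty},\norm{\bu}{C^2},L_f,L_b,|Q|)$, independent of $n$. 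For the third term, we use the uniform $L^\infty$-bound on $b(u_n)-b(\bu)$ together with a uniform $L^1$ (or $L^\infty$) bound on $\L_n[b(\bu)]$ over $\Omega$; here the crucial input is \eqref{eq:convL}, which gives $\norm{\L_n\phi}{p}\leq 2M\norm{\phi}{W^{2,p}}$ with $M=\sup_n\int(|z|^2\wedge 1)\dd\mu_n(z)<\infty$, applied after cutting off $b(\bu)$ to a $C_c^\infty$ (or $C_c^2$) function agreeing with it on a neighborhood of $\ol\Omega$ --- or, more simply, using that $b(\bu)\in C^2_b$ together with the standard bound $\norm{\L_n[\psi]}{L^\infty(\R^d)}\leq \tfrac12\norm{D^2\psi}{L^\infty}\int_{|z|<1}|z|^2\dd\mu_n + 2\norm{\psi}{L^\infty}\int_{|z|\geq1}\dd\mu_n \leq C M\norm{\psi}{C^2_b}$. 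Integrating over $Q$ yields a bound independent of $n$ since $\sup_n M<\infty$ by \eqref{muassumptionlim} and $\mu$ satisfying \eqref{muassumption}.

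The main (mild) obstacle is purely bookkeeping: making sure that the quantity $\L_n[b(\bu)]$ is well-defined and uniformly controlled even though $b(\bu)$ is only $C^2_b$ on $\R^d$ rather than compactly supported --- this is handled by the $L^\infty$ form of the Lévy estimate above, which only uses the $C^2$-seminorm on small jumps and the $L^\infty$-norm on large jumps, and is uniform in $n$ precisely because of $\sup_n M<\infty$. Once these three estimates are in place, taking the supremum over $n$ gives \eqref{energy_n}.
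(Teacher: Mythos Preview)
Your overall strategy---apply Proposition~\ref{prop: finiteEnergy} to each $u_n$ and bound the three terms of \eqref{eq: finiteEnergy} uniformly in $n$---is exactly the paper's approach, and your treatment of the first two terms is correct. The gap is in the third term.

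You claim $b(\bu)\in C^2_b$, but this is not justified by \eqref{bassumption}: the assumption is only $b'\in BV_{\textup{loc}}$, so $b$ need not be even $C^1$. For example, in the Stefan case $b(r)=(r-L)^+$, the composition $b(\bu)$ is merely Lipschitz, and neither of your two proposed estimates works: the $L^\infty$ bound $\norm{\L_n[\psi]}{L^\infty}\leq CM\norm{\psi}{C^2_b}$ has infinite right-hand side, and you cannot cut off $b(\bu)$ to a $C^2_c$ function agreeing with it near $\ol\Omega$. The quantity that must be controlled is $\norm{\L_n[b(\bu)]}{L^1(Q)}$ (paired with the uniform $L^\infty$-bound on $b(u_n)-b(\bu)$), and this requires an estimate that exploits the $BV$ structure of $b'$: since $D^2(b(\bu))$ is only a Radon measure, one needs an $L^1$ (not $L^\infty$) second-difference bound. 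The paper handles this by invoking \cite[Corollary~3.7]{AlEnJaMa23}, which gives precisely such an $L^1(Q)$ estimate uniform in $n$ (via $\sup_n\int(|z|^2\wedge1)\dd\mu_n<\infty$). Once you replace your $C^2_b$ argument with this $L^1$ estimate, the proof is complete.
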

\begin{proof}
Recall that $\gamma_n(t,x)=b(u_n(t,x))-b(\ol{u}^c(t,x))$ in $M=(0,T)\times\R^d$. By the above assumptions and Proposition \ref{prop: finiteEnergy}, the bound \eqref{eq: finiteEnergy} holds for the solution $u_n$ of \eqref{E2}, with a right-hand side that is bounded uniformly in $n$. By the assumptions this is immediate for the two first terms, and now we check the third term: 
\begin{align*}
    \Big|\int_{Q} \L_n[b(\bu)](b(u_n)-b(\bu_n))\dd x\dd t\Big| \leq \|\L_n[b(\bu)]\|_{L^1(Q)}(\|b(u_n)\|_{L^\infty}+\|b(\bu)\|_{L^\infty}).
\end{align*}
The first factor is bounded in $n$ by \cite[Corollary 3.7]{AlEnJaMa23} and the second by Lemma \ref{lem: maximumsPrinciple}. 
\end{proof}
 Looking at the definition of $\B$, it is natural to expect that an 
energy estimate like Lemma \ref{lem: uniformEnergy} implies  continuity of 
space translations  
 in $L^2$  of 
$\gamma_n$. The problem 
 here  is to find some 
condition on $(\L_n)_{n\in\N}$  under  which this continuity is uniform in $n$.  
 It is  easier to study this problem on the Fourier-side, where Lévy operators become multipliers 
 and  the condition can be characterized in a sharp manner  -- see  
Proposition \ref{prop: uniformTranslationContinuityForASequenceOfFiniteEnergies} below. 
 The (Fourier) multiplier ${-m}$ of $\L$ is given by\footnote{ This follows from the Lévy-Kinchine theorem \cite{App09} or a simple direct computation, see Lemma \ref{lem: fourierCharacterizationsOfLAndB}; in particular, $-m$ is the characteristic exponent in the Lévy–Khintchine representation of the Lévy process generated by $\L$ \cite{App09}.}
\begin{align}\label{eq: definitionOfSymbolOfLevyOperator}
    m(\xi)\coloneqq \int_{\R^d}\Big( 1 - \cos(\xi\cdot z)\Big)\dd\mu(z)\qquad\text{for}\qquad \xi\in\R^d,
\end{align}
where the integral is well-defined by \eqref{muassumption} since the integrand is bounded by $(\frac{1}{2}|\xi|^2|z|^2)\wedge 2$.
The function $m$ (also called multiplier from time to time) is continuous, nonnegative, and symmetric about zero. (It is also negative definite in Schoenberg sense \cite{BeFo75}, but this will not matter here.) 
 
 By the Frechet-Kolmogorov theorem, precompact sequences in $L^2$ are equicontinuous with respect to translations. The next result gives a necessary and sufficient condition in terms of $m$ for the energy space $H_0^{\L}(\Omega)$ defined through $\B$ to be compactly embedded in $L^2$. This result is of independent interest, and though it may be classical, we have not found it in the literature. 

\begin{proposition}\label{prop: coerciveSymbolEqualsCompactEmbeddingOfLevySpaceInL2}
Let $\L$ be a symmetric Lévy operator, $m$ its multiplier, and let $H_0^{\L}(\Omega)$ be as in \eqref{eq: definitionOfLevySpaceNorm}.  Then the embedding $H^{\L}_0(\Omega)\hookrightarrow L^2(\Omega)$ is compact if, and only if, 
\begin{align}\label{eq: criterionOnMeasureForCompactness}
\liminf_{|\xi|\to\infty}m(\xi)=\infty,
\end{align}
 which is precisely assumption \eqref{muassumption2}.
\end{proposition}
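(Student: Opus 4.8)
The natural strategy is to work entirely on the Fourier side, using Lemma \ref{lem: fourierCharacterizationsOfLAndB} (referred to in the excerpt) to identify $\|\phi\|_{H_0^\L(\Omega)}^2$ with $\int_{\R^d}(1+m(\xi))|\hat\phi(\xi)|^2\dd\xi$ for $\phi\in L^2(\Omega)$ (extended by zero). Thus $H_0^\L(\Omega)$ is, up to the constraint $\supp\phi\subseteq\overline\Omega$, the weighted space $L^2\big((1+m)\dd\xi\big)$ on the Fourier side. The claim then becomes: the unit ball of this constrained weighted space is precompact in $L^2(\Omega)$ iff $\liminf_{|\xi|\to\infty}m(\xi)=\infty$.

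\emph{Sufficiency of \eqref{muassumption2}.} Assume $m(\xi)\to\infty$ as $|\xi|\to\infty$ and take a sequence $(\phi_j)$ bounded in $H_0^\L(\Omega)$; by weak compactness pass to a subsequence with $\phi_j\rightharpoonup\phi$ in $H_0^\L(\Omega)$ (hence $\hat\phi_j\rightharpoonup\hat\phi$ in $L^2((1+m)\dd\xi)$), and WLOG $\phi=0$. I want $\phi_j\to0$ strongly in $L^2$. Split $\|\hat\phi_j\|_{L^2(\R^d)}^2=\int_{|\xi|\le R}|\hat\phi_j|^2\dd\xi+\int_{|\xi|>R}|\hat\phi_j|^2\dd\xi$. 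On $\{|\xi|>R\}$, since $m(\xi)\ge \delta(R)\coloneqq\inf_{|\xi|>R}m(\xi)\to\infty$, the tail is bounded by $\delta(R)^{-1}\int m|\hat\phi_j|^2\dd\xi\le C\delta(R)^{-1}$, uniformly in $j$ and arbitrarily small for $R$ large. On the fixed ball $\{|\xi|\le R\}$: because $\phi_j$ is supported in the fixed bounded set $\overline\Omega$ and bounded in $L^2$, the functions $\hat\phi_j$ are uniformly bounded and equi-Lipschitz (indeed $C^\infty$ with derivatives controlled by moments of $\phi_j$ over $\Omega$), so by Arzel\`a--Ascoli $\hat\phi_j\to\hat\phi=0$ uniformly on $\{|\xi|\le R\}$, whence $\int_{|\xi|\le R}|\hat\phi_j|^2\dd\xi\to0$. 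Combining, $\limsup_j\|\phi_j\|_{L^2}^2\le C\delta(R)^{-1}$ for every $R$, so $\phi_j\to0$ in $L^2(\Omega)$. This gives compactness of the embedding.

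\emph{Necessity.} Suppose \eqref{muassumption2} fails, i.e.\ $\liminf_{|\xi|\to\infty}m(\xi)=:C_0<\infty$; pick $\xi_k\to\infty$ with $m(\xi_k)\le C_0+1$. The plan is to build a bounded-in-$H_0^\L(\Omega)$ sequence with no $L^2$-convergent subsequence: fix $0\ne\psi\in C_c^\infty(\Omega)$ and set $\phi_k(x)\coloneqq e^{i\xi_k\cdot x}\psi(x)$, so $\hat\phi_k(\xi)=\hat\psi(\xi-\xi_k)$. Then $\|\phi_k\|_{L^2}=\|\psi\|_{L^2}$ is constant, while $\int m(\xi)|\hat\phi_k(\xi)|^2\dd\xi=\int m(\xi+\xi_k)|\hat\psi(\xi)|^2\dd\xi$. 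Here I would use the elementary subadditivity-type bound $m(\xi+\xi_k)\le 2\big(m(\xi)+m(\xi_k)\big)$ — which follows from $1-\cos((\xi+\xi_k)\cdot z)\le 2(1-\cos(\xi\cdot z))+2(1-\cos(\xi_k\cdot z))$ via the identity $1-\cos(a+b)\le 2(1-\cos a)+2(1-\cos b)$ — to get $\int m(\xi+\xi_k)|\hat\psi|^2\dd\xi\le 2\|\psi\|_{H_0^\L(\Omega)}^2\cdot\text{(const)}+2(C_0+1)\|\psi\|_{L^2}^2<\infty$, uniformly in $k$. (Note $\psi\in C_c^\infty\subset H_0^\L(\Omega)$ since $m$ has at most quadratic growth.) So $(\phi_k)$ is bounded in $H_0^\L(\Omega)$; but $\phi_k\rightharpoonup0$ in $L^2$ (Riemann--Lebesgue, testing against $L^2$ functions) while $\|\phi_k\|_{L^2}\not\to0$, so no subsequence converges strongly, contradicting compactness. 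One mild point to handle if $\hat\psi$ is merely Schwartz rather than compactly supported: the integral $\int m(\xi+\xi_k)|\hat\psi(\xi)|^2\dd\xi$ is still finite and uniformly bounded by the same subadditivity estimate since $\int m(\xi)|\hat\psi(\xi)|^2\dd\xi=\|\psi\|_{H_0^\L}^2-\|\psi\|_{L^2}^2<\infty$.

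\emph{Main obstacle.} The only delicate ingredient is the compactness on bounded frequency balls in the sufficiency direction — i.e.\ establishing that a sequence bounded in $L^2(\Omega)$ with fixed compact support has Fourier transforms converging uniformly on compacta along a subsequence. This is a Paley--Wiener / Rellich-type fact (essentially the classical Rellich--Kondrachov compactness for any single $H^s$, or just Arzel\`a--Ascoli applied to the entire analytic functions $\hat\phi_j$), and one should be slightly careful to phrase it correctly: weak $L^2$-convergence of $\phi_j$ on the fixed bounded set $\Omega$ already forces pointwise convergence of $\hat\phi_j(\xi)$ for each fixed $\xi$, and the uniform Lipschitz (or derivative) bounds from $\|\phi_j\|_{L^2(\Omega)}\le C$ and $|\Omega|<\infty$ then upgrade this to local uniform convergence. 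Everything else — the elementary inequality $1-\cos(a+b)\le 2(1-\cos a)+2(1-\cos b)$, the tail estimate, and Riemann--Lebesgue — is routine.
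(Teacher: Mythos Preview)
Your proposal is correct, and both directions differ from the paper's argument in interesting ways.

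For sufficiency, the paper instead shows translation equicontinuity directly: using Plancherel, $\|\phi_n-\phi_n(\cdot+y)\|_{L^2}^2=(2\pi)^{-d}\int|1-e^{i\xi\cdot y}|^2|\hat\phi_n|^2\dd\xi$ is split at $|\xi|=R$, bounded by $(R|y|)^2\|\phi_n\|_{L^2}^2+4\rho(R)^{-1}\int m|\hat\phi_n|^2$ with $\rho(R)=\inf_{|\xi|\ge R}m(\xi)$, and then Fr\'echet--Kolmogorov gives compactness. Your Arzel\`a--Ascoli argument on the low-frequency ball is equally valid and arguably more self-contained; the paper's route has the side benefit of serving as a template for the later uniform (in $n$) version, Proposition \ref{prop: uniformTranslationContinuityForASequenceOfFiniteEnergies}.

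For necessity, the paper's construction is more elaborate: it first reduces to $\supp\mu\subset B_1$ (so that $m$ is analytic with globally bounded Hessian), takes a \emph{radially symmetric} $\varphi\in C_c^\infty(\Omega)$, sets $\phi_n=\cos(\xi_n\cdot x)\varphi(x)$, and bounds $\int m(\xi+\xi_n)|\hat\varphi(\xi)|^2\dd\xi$ via a second-order Taylor expansion of $m$ about $\xi_n$, with the linear term $\nabla m(\xi_n)\cdot\xi$ vanishing by symmetry of $|\hat\varphi|^2$. Your subadditivity bound $m(\xi+\eta)\le 2m(\xi)+2m(\eta)$ (from $|1-e^{i(a+b)}|\le|1-e^{ia}|+|1-e^{ib}|$) is a cleaner and more robust device: it avoids the truncation of $\mu$, the Taylor expansion, and the symmetry requirement on $\psi$. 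One cosmetic point: your $\phi_k=e^{i\xi_k\cdot x}\psi$ are complex-valued, whereas the paper works over the reals; if this matters in context, replacing $e^{i\xi_k\cdot x}$ by $\cos(\xi_k\cdot x)$ (as the paper does) fixes it with no change to your estimates.
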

 A proof and discussion is given in Appendix \ref{sec:LevyOperatorsInducingTranslationRegularity}.

\begin{remark} 
In general, there is no simple way of ‘reading off’ from $\mu$ whether the corresponding multiplier $m$ satisfies condition \eqref{eq: criterionOnMeasureForCompactness}. Still, it is necessary, but not sufficient, that $\mu(\R^d)=\infty$, and it is sufficient, but not necessary, that $\mu_a(\R^d)=\infty$ where $\mu_a$ is the absolutely continuous part of $\mu$. All this follows from Lemmas \ref{lem: zeroOrderLevyOperatorsHaveBoundedSymbols}, \ref{lem: absolutelyContinuousMeasureHasNiceSymbol}, and \ref{lem: examplesOfBadLevyOperators}. 
\end{remark}

While this result is not directly applicable in our situation -- the sequence $(\gamma_n)_{n\in\N}$ is not bounded in a common $H^{\L}_0$-space -- it, and its proof, can be modified without much difficulty to suit our needs:
\begin{proposition}[Spatial equicontinuity from finite energy]\label{prop: uniformTranslationContinuityForASequenceOfFiniteEnergies}
    Consider a bounded sequence of functions $(\phi_n)_{n\in\N}\subset L^2(\R\times \R^d)$ and a sequence $(\L_n)_{n\in\N}$ of symmetric Lévy operators satisfying \eqref{muassumption} with corresponding bilinear operators $(\B_n)_{n\in \N}$ and multipliers $(m_n)_{n\in \N}$. If we have the uniform energy estimate 
    \begin{equation}\label{eq: theGeneralEnergyCondition}
        \sup_{n}\int_\R\int_{\R^d}\B_n[\phi_n(t,\cdot),\phi_n(t,\cdot)](x)\dd x\dd t <\infty,
    \end{equation}
and the joint coercivity condition on the multipliers
\begin{equation}\label{eq: theGeneralCoercivityConditionOfSymbol}
    \liminf_{\substack{|\xi|,n\to\infty}}m_n(\xi) = \infty,
\end{equation}
then the family $(\phi_n)_{n\in\N}$ is $L^2$-equicontinuous with respect to translations in space, that is
\begin{equation*}
    \lim_{y\to0}\sup_{n}\int_\R\int_{\R^d}|\phi_n(t,x+y)-\phi_n(t,x)|^2\dd x\dd t =0.
\end{equation*}
\end{proposition}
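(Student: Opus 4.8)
The plan is to work entirely on the Fourier side in the $x$-variable, using Plancherel, since translations and the bilinear forms $\B_n$ all become multiplication operators there. First I would record the Fourier identity for the energy: by (the analogue of) Lemma~\ref{lem: fourierCharacterizationsOfLAndB}, $\int_{\R^d}\B_n[\phi_n(t,\cdot),\phi_n(t,\cdot)](x)\dd x = \int_{\R^d} m_n(\xi)\,|\widehat{\phi_n}(t,\xi)|^2\dd\xi$, where the hat denotes the spatial Fourier transform. Integrating in $t$ and using \eqref{eq: theGeneralEnergyCondition} gives a uniform bound $\sup_n\int_\R\int_{\R^d} m_n(\xi)\,|\widehat{\phi_n}(t,\xi)|^2\dd\xi\dd t \le K <\infty$. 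Simultaneously, boundedness of $(\phi_n)_n$ in $L^2(\R\times\R^d)$ gives $\sup_n \int_\R\int_{\R^d}|\widehat{\phi_n}(t,\xi)|^2\dd\xi\dd t = \sup_n\|\phi_n\|_{L^2}^2 =: L <\infty$.

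Next I would express the translation modulus via Plancherel: $\|\phi_n(\cdot,\cdot+y)-\phi_n(\cdot,\cdot)\|_{L^2(\R\times\R^d)}^2 = \int_\R\int_{\R^d} |e^{i\xi\cdot y}-1|^2\,|\widehat{\phi_n}(t,\xi)|^2\dd\xi\dd t$, and split the $\xi$-integral at $|\xi|=R$ for a parameter $R>0$ to be chosen. On $\{|\xi|\le R\}$ I bound $|e^{i\xi\cdot y}-1|^2\le |\xi|^2|y|^2\le R^2|y|^2$ and use the $L^2$-bound $L$, getting a contribution $\le R^2|y|^2 L$. On $\{|\xi|>R\}$ I bound $|e^{i\xi\cdot y}-1|^2\le 4$ and insert $1 \le m_n(\xi)/\big(\inf_{|\xi|>R,\,n\ge N}m_n(\xi)\big)$ once $R$ and $N$ are large; by the joint coercivity \eqref{eq: theGeneralCoercivityConditionOfSymbol}, given any $\delta>0$ we can pick $R=R(\delta)$ and $N=N(\delta)$ so that $m_n(\xi)\ge 4K/\delta$ for all $|\xi|>R$ and $n\ge N$, making that contribution $\le \delta$ uniformly for $n\ge N$. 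Thus for $n\ge N$, $\|\phi_n(\cdot,\cdot+y)-\phi_n(\cdot,\cdot)\|_{L^2}^2 \le R(\delta)^2 L\,|y|^2 + \delta$, so $\limsup_{y\to0}\sup_{n\ge N}\|\cdots\|_{L^2}^2\le\delta$; since $\delta$ is arbitrary, $\limsup_{y\to0}\sup_{n\ge N}=0$. Finally, the finitely many remaining indices $n<N$ are handled individually: each single $\phi_n\in L^2(\R\times\R^d)$ is translation-continuous, so $\max_{n<N}\|\phi_n(\cdot,\cdot+y)-\phi_n\|_{L^2}\to0$ as $y\to0$. Combining the two gives $\lim_{y\to0}\sup_n\|\phi_n(\cdot,\cdot+y)-\phi_n\|_{L^2}=0$, which is the claim.

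The only genuinely delicate point is the careful quantifier juggling in \eqref{eq: theGeneralCoercivityConditionOfSymbol}: the $\liminf$ is taken jointly in $|\xi|\to\infty$ and $n\to\infty$, which is exactly what is needed so that a \emph{single} threshold $R$ works for all large $n$ at once. One should state precisely that \eqref{eq: theGeneralCoercivityConditionOfSymbol} means: for every $A>0$ there exist $R>0$ and $N\in\N$ with $m_n(\xi)\ge A$ whenever $|\xi|>R$ and $n\ge N$; this is the form used above with $A=4K/\delta$. I expect this bookkeeping, together with writing down the Fourier characterization of $\int\B_n[\cdot,\cdot]$ cleanly (essentially a one-line application of Lemma~\ref{lem: fourierCharacterizationsOfLAndB} and Tonelli to justify swapping the $t$ and $\xi$ integrals, legitimate since the integrand is nonnegative), to be the main — but still routine — work; the low-frequency estimate and the finite-index reduction are entirely standard.
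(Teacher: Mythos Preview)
Your proposal is correct and follows essentially the same approach as the paper's proof: Plancherel in $x$, split the $\xi$-integral at a threshold $R$, control low frequencies by $R^2|y|^2\cdot\|\phi_n\|_{L^2}^2$, control high frequencies via the energy bound and joint coercivity, and dispose of finitely many small $n$ by ordinary $L^2$-translation continuity. The only cosmetic difference is that the paper packages the coercivity into $\rho(R)\coloneqq\inf_{|\xi|,n\ge R}m_n(\xi)$ (using the same threshold for $n$ and $|\xi|$) and sends $R\to\infty$ at the end, whereas you use separate $\delta,R,N$; the content is the same.
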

\begin{remark}\label{remark: aSimplificationToTheComplicatedCorecivityCondition}  When the multipliers are pointwise decreasing $m_1\geq m_2\geq\dots$, the coercivity condition \eqref{eq: theGeneralCoercivityConditionOfSymbol} reduces to 
\begin{equation*}
    \liminf_{|\xi|\to\infty}\lim_{n\to\infty} m_n(\xi)=\infty,
\end{equation*}
and if they are pointwise increasing $m_1\leq m_2\leq\dots$,  the condition reduces to 
\begin{equation*}
\lim_{n\to\infty}\liminf_{|\xi|\to\infty}m_n(\xi)=\infty.
\end{equation*}
In particular, the multipliers are pointwise increasing when $\L_n=\L^{\geq 1/n}$ for some fixed Lévy operator $\L$; this is because the integrand of \eqref{eq: definitionOfSymbolOfLevyOperator} is non-negative and $\{|z|\geq \tfrac{1}{n}\}\subset \{|z|\geq \tfrac{1}{n+1}\}$.
\end{remark}
\begin{proof}[Proof of Proposition \ref{prop: uniformTranslationContinuityForASequenceOfFiniteEnergies}]
    Define $\rho\colon[0,\infty)\to[0,\infty)$ by
\begin{align*}
    \rho(R)\coloneqq \inf_{|\xi|,n\geq  R}m_n(\xi),
\end{align*}
which, in light of \eqref{eq: theGeneralCoercivityConditionOfSymbol}, tends to infinity as $R\to\infty$; in particular, we can fix a sufficiently large $R>0$ such that $\rho(R)>0$. Next, for arbitrary $n>R$ and $y\in\R^d$ we use Plancherel's theorem and compute
\begin{align*}
    &\,\norm{\phi_n(\cdot,\cdot + y)-\phi_n}{L^2(\R\times \R^{d})}^2 = \frac{1}{(2\pi)^d}\int_\R\int_{\R^d}|1-e^{i\xi \cdot y}|^2|\hat{\phi}_n(t,\xi)|^2\dd\xi\dd t\\
    \leq &\,\frac{(R|y|)^2}{(2\pi)^d}\int_\R\int_{|\xi|<R}|\hat{\phi}_n(t,\xi)|^2\dd\xi\dd t + \frac{4}{\rho(R)(2\pi)^d}\int_\R\int_{|\xi|\geq R}m_n(\xi)|\hat \phi_n(t,\xi)|^2\dd\xi\dd t\\ 
    \leq&\, (R|y|)^2 c_1 + \frac{4c_2}{\rho(R)}.
\end{align*}
Here we defined $ c_1\coloneqq \sup_n \norm{\phi_n}{L^2(\R\times\R^d)}^2<\infty$ and
\begin{align*}
     c_2\coloneqq \sup_n \frac{1}{(2\pi)^d}\int_\R\int_{\R^d}m_n(\xi)|\hat \phi_n(t,\xi)|^2\dd\xi\dd t=\sup_n \int_\R\int_{\R^d}\B_n[\phi_n(t,\cdot),\phi_n(t,\cdot)](x)\dd\xi\dd t<\infty,
\end{align*}
where the middle equality is the identity \eqref{eq: theLevyHilbertNormOnTheForuierSide} from Lemma \ref{lem: fourierCharacterizationsOfLAndB}. We conclude that  
\begin{align*}
   &\, \lim_{y\to 0}\sup_{n} \norm{\phi_n(\cdot,\cdot + y)-\phi_n}{L^2(\R\times \R^{d})}^2\\ 
    \leq&\, \lim_{y\to 0}\Bigg[ \Big(\max_{n<R} \norm{\phi_n(\cdot,\cdot + y)-\phi_n}{L^2(\R\times \R^{d})}^2\Big)\vee\bigg((R|y|)^2 c_1 + \frac{4c_2}{\rho(R)}\bigg)\Bigg]
    = \frac{4c_2}{\rho(R)},
\end{align*}
where the last equality follows as a finite number of $L^2$ functions are uniformly continuous with respect to translations. Sending $R\to\infty$, the result follows.
\end{proof}
We end the section with a remark on the subtle fact that the multipliers $m_n$ of the truncated Lévy operators $\L^{\geq 1/n}$ may fail \eqref{eq: theGeneralCoercivityConditionOfSymbol} even when $m$ of $\L$ satisfies \eqref{eq: criterionOnMeasureForCompactness}:
\begin{remark}\label{rem: thereExistProblematicLevyoperators}
    The most direct application of the previous proposition is to the case $\L_n=\L^{\geq 1/n}$ where $\L$ is some fixed Lévy operator. However, as explained in Lemma \ref{lem: examplesOfBadLevyOperators}, the corresponding sequence of multipliers $(m_n)_{n\in\N}$ may fail the coercivity condition \eqref{eq: theGeneralCoercivityConditionOfSymbol} even though $\L$ satisfies \eqref{muassumption2}. Such `bad' Lévy operators complicates the existence proof  since one cannot necessarily approximate $\L$ in a sufficiently strong sense through its (bounded) truncations. Conveniently, this is by Lemma \ref{lem: absolutelyContinuousMeasureHasNiceSymbol} not a problem for the fractional Laplacians, and so the general existence proof -- for when $\L$ satisfies \eqref{muassumption2} -- bootstraps off of this simpler example which \textit{can} be sufficiently approximated by truncations.
\end{remark}

We conclude the section with the following summary of the derived results, obtained by combining Propositions \ref{prop: fromSpatialTranslationContinuityToTemporalTranslationContinuity} and \ref{prop: uniformTranslationContinuityForASequenceOfFiniteEnergies} and Lemma \ref{lem: uniformEnergy}.
\begin{corollary}[Precompactness of $(b(u_n))_{n\in \N}$ from finite energies]\label{cor: jointCorecivityImpliesPrecompactnessOfb(u_n)}
Assume \eqref{Omegaassumption}--\eqref{muassumption},  \eqref{muassumptionlim}, $u_n$ is the entropy solution of \eqref{E2}, and that the corresponding multipliers $(m_n)_{n\in \N}$ of $(\L_n)_{n\in\N}$ satisfy \eqref{eq: theGeneralCoercivityConditionOfSymbol}, i.e.  
$\liminf_{|\xi|,n\to \infty}m_n(\xi)=\infty$. 
Then $(b(u_n))_{n\in \N}\subset L^2(Q)$ is precompact. 
\end{corollary}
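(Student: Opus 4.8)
\textit{Strategy.} The corollary is a direct consequence of chaining together the three preceding results, applied to the extended diffusive fluxes $\gamma_n$ of \eqref{eq:DefinitionExtensionB}, in the order: uniform energy (Lemma \ref{lem: uniformEnergy}) $\Rightarrow$ spatial $L^2$-equicontinuity (Proposition \ref{prop: uniformTranslationContinuityForASequenceOfFiniteEnergies}) $\Rightarrow$ temporal $L^2$-equicontinuity and precompactness (Proposition \ref{prop: fromSpatialTranslationContinuityToTemporalTranslationContinuity}).

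First I would check that the sequence $(\gamma_n)_{n\in\N}$ satisfies the hypotheses of Proposition \ref{prop: uniformTranslationContinuityForASequenceOfFiniteEnergies} with $\phi_n\coloneqq\gamma_n$. It is a bounded sequence in $L^2(\R\times\R^d)$: by the maximum principle Lemma \ref{lem: maximumsPrinciple} and \eqref{u^cassumption} the functions $\gamma_n$ are uniformly $L^\infty$-bounded, and they are all supported in the fixed bounded cylinder $Q$, so the $L^2(\R^{1+d})$-norms are uniformly bounded. The uniform energy condition \eqref{eq: theGeneralEnergyCondition} is precisely Lemma \ref{lem: uniformEnergy}, upon noting that integrating over $t\in\R$ agrees with integrating over $t\in(0,T)$ since $\gamma_n(t,\cdot)$ vanishes for $t\notin(0,T)$. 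The joint coercivity condition \eqref{eq: theGeneralCoercivityConditionOfSymbol} on $(m_n)_{n\in\N}$ is exactly the assumption of the corollary. Hence Proposition \ref{prop: uniformTranslationContinuityForASequenceOfFiniteEnergies} yields that $(\gamma_n)_{n\in\N}$ is $L^2$-equicontinuous with respect to spatial translations, i.e. \eqref{eq:SpaceTranslations} holds.

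Then I would invoke Proposition \ref{prop: fromSpatialTranslationContinuityToTemporalTranslationContinuity}, whose remaining assumptions --- \eqref{Omegaassumption}--\eqref{muassumption}, \eqref{muassumptionlim}, and that $u_n$ is the entropy solution of \eqref{E2} --- are all in force here. It upgrades the spatial equicontinuity \eqref{eq:SpaceTranslations} to full $L^2$-equicontinuity in space-time, and then concludes through Kolmogorov's compactness theorem that $(b(u_n))_{n\in\N}$ is precompact in $L^2(Q)$, which is exactly the assertion of the corollary. There is no genuine obstacle remaining at this stage, as all of the analytic content has been established in the cited results; the only points requiring a moment's care are the two bookkeeping items already mentioned --- matching the time-integration range between Lemma \ref{lem: uniformEnergy} and Proposition \ref{prop: uniformTranslationContinuityForASequenceOfFiniteEnergies}, and using that precompactness of $(\gamma_n)_{n\in\N}$ in $L^2(\R^{1+d})$ is equivalent to precompactness of $(b(u_n))_{n\in\N}$ in $L^2(Q)$ because $b(\bu)$ is a fixed function built from the smooth datum, an equivalence already recorded in the discussion preceding Proposition \ref{prop: fromSpatialTranslationContinuityToTemporalTranslationContinuity}.
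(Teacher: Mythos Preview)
Your proposal is correct and follows essentially the same approach as the paper, which states explicitly that the corollary is obtained by combining Propositions \ref{prop: fromSpatialTranslationContinuityToTemporalTranslationContinuity} and \ref{prop: uniformTranslationContinuityForASequenceOfFiniteEnergies} with Lemma \ref{lem: uniformEnergy}. The bookkeeping checks you flag (boundedness of $(\gamma_n)_n$ in $L^2$, matching the $t$-integration, and the equivalence between precompactness of $(\gamma_n)_n$ and of $(b(u_n))_n$) are exactly the minor verifications needed, and you handle them correctly.
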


\section{Existence of entropy solutions} 
\label{sec: existenceOfEntropySolutions}

 In this section we give the proofs of our existence results. The arguments rely on the compactness and stability results obtained in Section \ref{sec: stabiltyWithRespectToVariationsInL}. 
Note that by Lemma \ref{lem: maximumsPrinciple}, the solutions we seek will take values in the  interval
$$
\Big[\essinf_{\Omega} u_0\wedge \essinf_{Q^c} u^c, \esssup_{\Omega} u_0\vee  \esssup_{ Q^c} u^c\Big].
$$ 
Modifications of $f,b$ outside this interval will have no effect on the solutions of the problem. Without loss of generality, we may therefore assume $f$ and $b$ to be globally Lipschitz continuous and bounded. 
With this in mind we start the existence proof from the following local problem:
\begin{align}\label{eq: conservationLawWithSourceTerm}
    \begin{cases}
    \partial_tu +\text{div}\big(f(u)\big)=g,\qquad&\text{in}\qquad Q,\\
u=\overline{u}^c \qquad&\text{on}\qquad \Gamma,\\
u(0,\cdot)=u_0 \qquad&\text{on}\qquad \Omega.\\
    \end{cases}
\end{align}

\begin{lemma}[\cite{PoVo03}]\label{lem:CL}
Assume \eqref{Omegaassumption}, \eqref{fassumption}, \eqref{u^cassumption}, \eqref{u_0assumption}, $g\in L^1(Q)$, and $f$ globally Lipschitz and bounded. Then
\begin{enumerate}[{\rm (a)}]
\item There exists a unique renormalised
entropy solution $u\in C(0,T;L^1(\Omega))$ of \eqref{eq: conservationLawWithSourceTerm}.
\item Two renormalised entropy solutions $u,\tilde {u}\in L^1(Q)$ of \eqref{eq: conservationLawWithSourceTerm} with source terms $g,\tilde g\in L^1(Q)$, satisfy \begin{align}\label{eq: L1contractionOfRenormalizedEntropySolutions}
    \int_{\Omega}|u-\tilde u|(t) \dd x\leq 
    \int_0^t\int_{\Omega}|g-\tilde g|\dd x \dd t \quad \text{for}\quad t\in (0,T).
\end{align}
\item If in addition $g\in L^\infty(Q)$, then $u\in L^\infty(Q)$, and $u$ is an entropy solution of \eqref{eq: conservationLawWithSourceTerm}: For all $k\in\R$ and $0\leq \varphi\in C^{\infty}_c([0,T)\times\R^d)$, \begin{equation}\label{eq: entropyFormulationBasicConservationLaw}
    \begin{split}
        &\,-\int_Q (u-k)^\pm\varphi_t + F^\pm(u,k)\cdot\nabla\varphi + \mathrm{sgn}^{\pm}(u-k)g\varphi\dd x\dd t\\
    \leq &\,  \int_{\Omega}(u_0-k)^{\pm}\varphi(0,\cdot)\dd x + L_f\int_{\Gamma}(\overline{u}^c - k)^{\pm}\varphi \dd \sigma(x)\dd t.
    \end{split}
\end{equation}
\end{enumerate}
\end{lemma}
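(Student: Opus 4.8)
\emph{Plan.} The intention is to obtain (a) and (b) as (essentially immediate, up to a minor extension) consequences of the main results of \cite{PoVo03}, and to reduce (c) to the classical bounded-data entropy theory by exploiting uniqueness. For (a) and (b): after the substitution $w\coloneqq u-\bu$ — admissible by \eqref{u^cassumption}, and turning \eqref{eq: conservationLawWithSourceTerm} into a first-order equation with homogeneous Dirichlet data, initial datum $u_0-\bu(0,\cdot)\in L^\infty(\Omega)\subset L^1(\Omega)$, source $g-\partial_t\bu\in L^1(Q)$, and the globally Lipschitz $(t,x)$-dependent flux $(t,x,w)\mapsto f\big(w+\bu(t,x)\big)$ — the claims become exactly those of \cite{PoVo03} (in the version permitting Lipschitz $x$-dependence of the flux; equivalently, one keeps $\bu$ and uses the inhomogeneous-boundary version of that theory). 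Their existence/uniqueness theorem provides the renormalised entropy solution $w\in C([0,T];L^1(\Omega))$, and their doubling-of-variables argument gives the $L^1$-contraction \eqref{eq: L1contractionOfRenormalizedEntropySolutions} and hence uniqueness; undoing the substitution yields (a) and (b).

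\emph{Part (c).} For the $L^\infty$-bound I would compare with explicit barriers: with $M\coloneqq\norm{u_0}{L^\infty(\Omega)}\vee\norm{\bu}{L^\infty(\Gamma)}$, the spatially constant functions $v_\pm(t,x)\coloneqq\pm\big(M+t\norm{g}{L^\infty(Q)}\big)$ are classical — hence renormalised entropy — super/subsolutions of \eqref{eq: conservationLawWithSourceTerm} whose initial and boundary data dominate (resp.\ are dominated by) those of $u$, so the comparison principle for renormalised entropy solutions (contained in \cite{PoVo03}, or extracted from \eqref{eq: L1contractionOfRenormalizedEntropySolutions} applied to $(u-v_\pm)^{\pm}$) forces $v_-\le u\le v_+$ and thus $u\in L^\infty(Q)$ with $\norm{u}{L^\infty(Q)}\le M+T\norm{g}{L^\infty(Q)}$. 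Once $u$ is bounded, the renormalisation truncations in the solution concept become inactive, so $u$ is an entropy solution in the Otto / Bardos--Le Roux--Nedelec sense, which by \cite{Vov02} is the same as satisfying the semi-Kru\v{z}kov inequalities; the stated boundary term appears because Otto's boundary flux integrand is controlled by $L_f(\bu-k)^\pm$ via the elementary estimate $|F^\pm(a,k)|=|\sgn^\pm(a-k)\big(f(a)-f(k)\big)|\le L_f(a-k)^\pm$, giving precisely \eqref{eq: entropyFormulationBasicConservationLaw}. A cleaner route to (c) bypasses the renormalised framework entirely: for bounded data the classical theory directly furnishes a bounded entropy solution of \eqref{eq: conservationLawWithSourceTerm} satisfying \eqref{eq: entropyFormulationBasicConservationLaw} together with the above $L^\infty$ bound, and since every bounded entropy solution is a renormalised entropy solution, part (b) identifies it with $u$.

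\emph{Main obstacle.} The step I expect to demand the most care is the passage, inside (c), from \cite{PoVo03}'s renormalised formulation to the plain one and on to the precise inequality \eqref{eq: entropyFormulationBasicConservationLaw}: one must check that $L^\infty$-boundedness genuinely deactivates all renormalisation truncations, and that translating Otto's boundary entropy condition yields exactly the term $L_f\int_\Gamma(\bu-k)^\pm\varphi\dd\sigma(x)\dd t$ and nothing more. By contrast, (a), (b) and the $L^\infty$ estimate are little more than quoting \cite{PoVo03} and a one-line comparison argument.
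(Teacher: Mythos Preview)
Your proposal is correct and follows essentially the same route as the paper: parts (a) and (b) are obtained by direct appeal to the existence/uniqueness and $L^1$-contraction theorems of \cite{PoVo03}, and for (c) the paper likewise argues that boundedness of the data makes renormalised entropy solutions coincide with ordinary entropy solutions (citing \cite[Remark~2.2]{PoVo03}) and obtains the $L^\infty$ bound by adapting the maximum principle, which amounts to your barrier comparison. Your substitution $w=u-\bu$ and the explicit discussion of the boundary term are harmless elaborations, but unnecessary since \cite{PoVo03} treats inhomogeneous boundary data directly and already packages the semi-Kru\v{z}kov inequality with the $L_f$ boundary term.
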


\begin{proof}
Since $\Omega, u, u_0, f$ and $g$ satisfy the assumptions of \cite{PoVo03}, existence and uniqueness of renormalised entropy solutions of \eqref{eq: conservationLawWithSourceTerm} follow from \cite[Theorem 2.3]{PoVo03} and the $L^1$-contraction from \cite[Theorem 3.1]{PoVo03}. Under our assumptions (data in $L^\infty$) the renormalized entropy solutions of \cite{PoVo03} are entropy solutions and hence satisfy \eqref{eq: entropyFormulationBasicConservationLaw}, see \cite[Remark 2.2]{PoVo03}. The solution $u$ is bounded by (adapting the proof of) Lemma \ref{lem: maximumsPrinciple}. 
\end{proof}

Combining this result with a fixed point argument, we obtain the existence of solutions for \eqref{E} when $\L$ is bounded ($\mu$ is finite).

\begin{theorem}[Existence for bounded $\L$]\label{thm: existenceForBoundedL}
Assume \eqref{Omegaassumption}--\eqref{u_0assumption}, $f, b$ globally Lipschitz and bounded, and 
$\mu\geq0$ is a symmetric Radon measure such that 
$\mu(\R^d)<\infty$. Then there exists an entropy solution $u$ of \eqref{E}.
\end{theorem}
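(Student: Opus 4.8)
The plan is to realise the solution as a fixed point of a solution map for the \emph{local} problem \eqref{eq: conservationLawWithSourceTerm}, exploiting that $\L$ is bounded in this regime. Since $\mu(\R^d)<\infty$ one has $\L[\psi]=\mu\ast\psi-\mu(\R^d)\psi$ (cf.\ Remark \ref{rem: defentsolnremark}), so that $\|\L[\psi]\|_{L^p(\R^d)}\leq 2\mu(\R^d)\|\psi\|_{L^p(\R^d)}$ for every $p\in[1,\infty]$. For $v\in C([0,T];L^1(\Omega))$, let $\bar v$ denote the extension of $v$ to $M$ by $u^c$ on $Q^c$, set $g_v\coloneqq\L[b(\bar v)]\big|_Q$ (which lies in $L^\infty(Q)$ because $b$ is bounded), and let $\mathcal{T}(v)$ be the unique renormalised entropy solution of \eqref{eq: conservationLawWithSourceTerm} with source $g_v$, provided by Lemma \ref{lem:CL}(a). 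By Lemma \ref{lem:CL}(c), $\mathcal{T}(v)\in L^\infty(Q)$ as well and it satisfies the conservation-law entropy inequalities \eqref{eq: entropyFormulationBasicConservationLaw} with $g=g_v$; in particular $\mathcal{T}$ maps $C([0,T];L^1(\Omega))$ into itself.

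Next I would show that $\mathcal{T}$ has a unique fixed point. As $\bar v_1-\bar v_2$ is supported in $\overline Q$, the $L^1$-contraction \eqref{eq: L1contractionOfRenormalizedEntropySolutions} combined with the $L^1$-bound on $\L$ and the Lipschitz bound on $b$ gives
\[
\|\mathcal{T}(v_1)(t)-\mathcal{T}(v_2)(t)\|_{L^1(\Omega)}\leq\int_0^t\|g_{v_1}-g_{v_2}\|_{L^1(\Omega)}\dd s\leq 2\mu(\R^d)L_b\int_0^t\|v_1(s)-v_2(s)\|_{L^1(\Omega)}\dd s .
\]
Iterating, $\mathcal{T}^n$ is a strict contraction on the Banach space $C([0,T];L^1(\Omega))$ for $n$ large (equivalently, one may use the weighted norm $\sup_t e^{-\lambda t}\|\cdot(t)\|_{L^1(\Omega)}$ with $\lambda$ large), so by the Banach fixed point theorem there is a unique $u\in C([0,T];L^1(\Omega))$ with $\mathcal{T}(u)=u$. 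By the first paragraph $u\in L^\infty(Q)$, and I take its extension $\bar u$ (by $u^c$ on $Q^c$) as the candidate entropy solution of \eqref{E}; part (b) of Definition \ref{def: entropySolutionVovelleMethod} then holds by construction.

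It remains to upgrade the conservation-law entropy inequalities \eqref{eq: entropyFormulationBasicConservationLaw} for $u$, with $g=\L[b(\bar u)]$, to the nonlocal entropy inequalities \eqref{eq: entropyInequalityVovelleMethod} for every $r>0$. Splitting $\L=\L^{\geq r}+\L^{<r}$, this reduces to the claim
\[
\int_Q\L^{<r}[b(u)]\,\sgn^\pm(u-k)\,\varphi\dd x\dd t\leq\int_M (b(u)-b(k))^\pm\,\L^{<r}[\varphi]\dd x\dd t
\]
for admissible $0\leq\varphi$. I would derive this from (i) the pointwise inequality $\sgn^\pm(u-k)\,\L^{<r}[b(u)]\leq\L^{<r}\big[(b(u)-b(k))^\pm\big]$, valid since $b$ is nondecreasing; (ii) self-adjointness of the symmetric operator $\L^{<r}$; and (iii) the admissibility condition $(b(u^c)-b(k))^\pm\varphi=0$ in $Q^c$ together with $\L^{<r}[(b(u)-b(k))^\pm]\geq 0$ at the points of $\Omega^c$ where $(b(u)-b(k))^\pm$ vanishes, which handles the exterior boundary term with the favourable sign. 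Plugging the resulting inequality into \eqref{eq: entropyFormulationBasicConservationLaw} produces exactly \eqref{eq: entropyInequalityVovelleMethod}, so $\bar u$ is an entropy solution of \eqref{E}.

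Most of this is routine; the step I expect to require the most care is the last one --- converting the fixed point of the \emph{local} map $\mathcal{T}$ into a solution of the genuinely nonlocal problem, i.e.\ carrying out the $r$-splitting of the bounded operator $\L$ and controlling the exterior contributions precisely enough to land on Definition \ref{def: entropySolutionVovelleMethod}.
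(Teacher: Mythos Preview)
Your proposal is correct and follows essentially the same route as the paper: a Banach fixed-point argument on $C([0,T];L^1(\Omega))$ for the map $v\mapsto$ (solution of \eqref{eq: conservationLawWithSourceTerm} with $g=\L[b(\bar v)]$), using the $L^1$-contraction \eqref{eq: L1contractionOfRenormalizedEntropySolutions} and the bound $\|\L[\psi]\|_{L^1}\le 2\mu(\R^d)\|\psi\|_{L^1}$, followed by the upgrade to \eqref{eq: entropyInequalityVovelleMethod} via the convex inequality, self-adjointness of $\L^{<r}$, and the admissibility condition on $Q^c$. The paper carries out the iteration explicitly (factorial decay) rather than invoking $\mathcal{T}^n$ or a weighted norm, but this is cosmetic.
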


\begin{proof}
On $C(0,T;L^1(\Omega))$ we define the mapping $I\colon v\mapsto u$, where $u$ is the renormalised entropy solution of \eqref{eq: conservationLawWithSourceTerm} with $g=\L[b(\overline{v})]$ and where $\overline{v}$ is the extension of $v$ to $M$ set to satisfy $\overline{v}_{|Q^c}=u^c$. Since $\overline{v}$ is locally integrable, $b$ is bounded, and $\mu$ is finite, it follows that $g_{|Q}\in L^\infty(Q)\subset L^1(Q)$. Since $C(0,T;L^1(\Omega))\subset L^1(Q)$, it follows by Lemma \ref{lem:CL} that  $I\colon C(0,T;L^1(\Omega))\to C(0,T;L^1(\Omega))$ is well-defined, injective, and Lipschitz-continuous
\begin{equation}\label{eq: sliceContinuityOfTheMapI}   
        \norm{(I[v]-I[w])(t)}{L^1(\Omega)}\leq  \int_0^t\norm{\L[b(\overline{v})-b(\overline{w})](s)}{L^1(\Omega)} \dd s\leq 2L_b\|\mu\|\int_0^t\norm{(v-w)(s)}{L^1(\Omega)}\dd s,
 \end{equation}
 for 
 $t\in (0,T)$ where $L_b$ is the Lipschitz constant of $b$ and $\|\mu\|$ the total mass of $\mu$.
 
Consider the fixed point iteration $u_{k+1}=I[u_k]$ for $k\in\N_0$ and $u_0=0$. From \eqref{eq: sliceContinuityOfTheMapI} we get 
\begin{align*}
     \norm{(u_{k+1}-u_k)(t)}{L^1(\Omega)} \leq \ &2L_b\|\mu\|\int_0^t\norm{(u_k-u_{k-1})(s)}{L^1(\Omega)}\dd s\\
     \leq \ &(2L_b\|\mu\|)^{k}\int_0^T\int_0^{s_1} \cdots\int_0^{s_{k-1}}\norm{u_1(s_{k})}{L^1(\Omega)}\dd s_{k}\dots \dd s_1\\
     \leq \ &\frac{(2L_b\|\mu\|T)^{k} }{k!}\max_{t\in[0,T]}\norm{u_1(t)}{L^1(\Omega)},
\end{align*}
where the last term is finite since  $u_1=I[0]\in C(0,T;L^1(\Omega))$ by Lemma \ref{lem:CL}. 
Thus $(u_k)_{k\in\N}$ is Cauchy and converges to some $u$ in $C(0,T;L^1(\Omega))$.  
Taking the limit as $k\to\infty$ inside the identity $u_{k+1}=I(u_k)$, we conclude from the continuity of $I$ that $u=I(u)$, i.e. $u$ is a renormalized entropy solution of \eqref{eq: conservationLawWithSourceTerm} with source term $g=-\L[b(\overline{u})]$ where $\overline{u}$ is the extension of $u$ to $M$ by $u_{|Q^c}=u^c$. Since $u_0$ and $g$ are bounded ($b$ is bounded and $\mathcal L: L^\infty \to L^\infty$), $u$ is bounded and an entropy solution of \eqref{eq: conservationLawWithSourceTerm} as defined in \eqref{eq: entropyFormulationBasicConservationLaw}: For all $k\in\R$ and $0\leq \varphi\in C^\infty_c([0,T)\times \R^d)$
\begin{equation}\label{eq: entropyFormulationForZeroOrderLWithNoR}
    \begin{split}
        &\,\int_Q -(u-k)^\pm\varphi_t - F^\pm(u,k)\cdot\nabla\varphi - \sgn^{\pm}(u-k)\L[b(\overline u)]\varphi\dd x\dd t\\
    \leq &\,  \int_{\Omega}(u_0-k)^{\pm}\varphi(0,\cdot)\dd x + L_f\int_{\Gamma}(\overline{u}^c - k)^{\pm}\varphi \dd \sigma(x)\dd t.
    \end{split}
\end{equation}

It remains to show that $u$ is in fact an entropy solution of \eqref{E} according to Definition \ref{def: entropySolutionVovelleMethod};  we need only show that inequality \eqref{eq: entropyFormulationForZeroOrderLWithNoR} implies \eqref{eq: entropyInequalityVovelleMethod} for all $r>0$, since $u$ is bounded and $u_{|Q^c}=u^c$ (this is implicit from the use of $\overline u$ in the $g$-term).
Take $r>0$, $k\in \R$, and $0\leq \varphi\in C^\infty_c([0,T)\times \R^d)$ satisfying
\begin{align*}
    (b(u^c)-b(k))^\pm\varphi =0 \qquad\text{in}\qquad Q^c.
\end{align*}
Since $\mu(\R^d)<\infty$, it immediately follows that for $(t,x)\in Q^c$,
\begin{align*}
    \L^{<r}[(b(u)-b(k))^\pm]\varphi
    =  \int_{|z|<r}\Big((b(u(\cdot,\cdot+z))-b(k))^\pm-(b(u)-b(k))^\pm\Big)\varphi\dd \mu(z)\geq 0,
\end{align*}
and then a convex inequality (cf. \cite[Corollary C.2]{AlEnJaMa23}) and symmetry of $\L^{<r}$ yields
\begin{align*}
     &\,\int_Q \sgn^\pm (u-k)\L^{<r}[b(u)]\varphi \dd x\dd t
    \leq  \int_Q \L^{<r}[(b(u)-b(k))^{\pm}]\varphi \dd x\dd t \\ 
    \leq&\,  \int_M \L^{<r}[(b(u)-b(k))^{\pm}]\varphi \dd x\dd t = \int_M ((b(u)-b(k))^{\pm})\L^{<r}[\varphi] \dd x\dd t.
\end{align*}
Hence, since $\L=\L^{<r}+\L^{\geq r}$, we conclude that 
\begin{align*}
    &\,\int_Q \sgn^\pm (u-k)\L
    [b(u)]\varphi \dd x\dd t 
    \leq  \int_M (b(u)-b(k))^\pm\L^{<r}[\varphi] \dd x\dd t+\int_Q\sgn^\pm (u-k)\L^{\geq r}[b(u)]\varphi\dd x\dd t .
\end{align*}
Inserting this in \eqref{eq: entropyFormulationForZeroOrderLWithNoR}, we get \eqref{eq: entropyInequalityVovelleMethod}.
\end{proof}
 
By the material developed in Section \ref{sec: stabiltyWithRespectToVariationsInL}, the unbounded case is a 
consequence of the former result:

\begin{theorem}[Existence for unbounded $\L$]\label{thm: existenceForUnboundedL}
Assume \eqref{Omegaassumption}--\eqref{u_0assumption}, and \eqref{muassumption2}.
Then there exists an entropy solution $u$ of \eqref{E}.
\end{theorem}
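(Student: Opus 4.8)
The plan is to deduce Theorem \ref{thm: existenceForUnboundedL} from Theorem \ref{thm: existenceForBoundedL} by an approximation argument, using exactly the machinery of Section \ref{sec: stabiltyWithRespectToVariationsInL}. First I would reduce to globally Lipschitz and bounded $f,b$ as explained at the start of this section (modifications outside the a priori $L^\infty$-range of solutions are harmless by Lemma \ref{lem: maximumsPrinciple}). Next, I must produce a sequence of \emph{bounded} Lévy operators $\L_n\to\L$ in the sense of \eqref{muassumptionlim} whose multipliers $(m_n)_{n\in\N}$ satisfy the joint coercivity condition \eqref{eq: theGeneralCoercivityConditionOfSymbol}, i.e. $\liminf_{|\xi|,n\to\infty}m_n(\xi)=\infty$. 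The naive choice $\L_n=\L^{\geq 1/n}$ works for the truncation part of \eqref{muassumptionlim} (indeed $\int_{\R^d}(|z|^2\wedge1)\dd|\mu^{\geq 1/n}-\mu|(z)=\int_{|z|<1/n}(|z|^2\wedge1)\dd\mu(z)\to0$ by dominated convergence), but as flagged in Remark \ref{rem: thereExistProblematicLevyoperators} the multipliers of $\L^{\geq1/n}$ may fail \eqref{eq: theGeneralCoercivityConditionOfSymbol} even under \eqref{muassumption2}; by Remark \ref{remark: aSimplificationToTheComplicatedCorecivityCondition} (pointwise increasing case) the obstruction is precisely that $\lim_n\liminf_{|\xi|\to\infty}m_n(\xi)$ could be strictly smaller than $\liminf_{|\xi|\to\infty}m(\xi)=\infty$.

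The resolution — which I expect to be the main obstacle and the real content of the theorem — is to choose a smarter approximating sequence. The idea (cf. Remark \ref{rem: thereExistProblematicLevyoperators}) is to bootstrap off the fractional Laplacian, which \emph{can} be approximated by its truncations with coercive multipliers (Lemma \ref{lem: absolutelyContinuousMeasureHasNiceSymbol}). Concretely, for a given $\L$ satisfying \eqref{muassumption2} with multiplier $m$, I would first observe that $m(\xi)\to\infty$, and then build $\mu_n$ by combining a truncation of $\mu$ with a vanishing amount of an absolutely continuous (e.g. fractional-Laplacian-type) measure $\delta_n\dd\mu_\alpha$ chosen so that: (i) $\mu_n$ is finite for each $n$, so Theorem \ref{thm: existenceForBoundedL} applies and yields an entropy solution $u_n$ of \eqref{E2}; (ii) $\int_{\R^d}(|z|^2\wedge1)\dd|\mu_n-\mu|(z)\to0$, so \eqref{muassumptionlim} holds; and (iii) the multiplier $m_n$ of $\L_n$ dominates $\min\{m^{\geq1/n}(\xi),\delta_n m_\alpha(\xi)\}$ with $\delta_n\to0$ slowly enough that $\delta_n m_\alpha(\xi)\to\infty$ jointly with $|\xi|$, forcing \eqref{eq: theGeneralCoercivityConditionOfSymbol}. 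One must check the two requirements are compatible — this is where the precise rate of $\delta_n\to0$ is tuned against the rate at which $\int_{|z|<1/n}(|z|^2\wedge1)\dd\mu(z)\to0$ — and invoke the relevant lemma from Appendix (Lemma \ref{lem: absolutelyContinuousMeasureHasNiceSymbol}) to know that $m_\alpha(\xi)\to\infty$ with a controlled (polynomial) growth so the tuning is possible.

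With such a sequence in hand the conclusion is immediate: by Theorem \ref{thm: existenceForBoundedL} there are entropy solutions $u_n$ of \eqref{E2} with the common data $u_0,u^c$; by Corollary \ref{cor: jointCorecivityImpliesPrecompactnessOfb(u_n)} the joint coercivity \eqref{eq: theGeneralCoercivityConditionOfSymbol} gives precompactness of $(b(u_n))_{n\in\N}$ in $L^2(Q)$; and then Proposition \ref{prop: stabilityOfEntropySolutionsWithRespectToPerturbationsInL} applies (its hypotheses \eqref{muassumptionlim} and $L^2$-precompactness of $b(u_n)$ are exactly what we arranged) to give $u_n\to u$ in $L^p(Q)$ for $p\in[1,\infty)$, with $u$ the unique entropy solution of \eqref{E} for $\L$. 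This $u$ is the desired entropy solution, completing the proof. The only genuinely delicate point is the construction of $(\L_n)_{n\in\N}$ satisfying \eqref{muassumptionlim} and \eqref{eq: theGeneralCoercivityConditionOfSymbol} simultaneously; everything downstream is a black-box application of the Section \ref{sec: stabiltyWithRespectToVariationsInL} results.
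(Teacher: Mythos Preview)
Your overall strategy is that of the paper: approximate $\L$ by bounded operators, invoke Theorem \ref{thm: existenceForBoundedL} for the approximants, verify \eqref{muassumptionlim} and the joint coercivity \eqref{eq: theGeneralCoercivityConditionOfSymbol}, then apply Corollary \ref{cor: jointCorecivityImpliesPrecompactnessOfb(u_n)} and Proposition \ref{prop: stabilityOfEntropySolutionsWithRespectToPerturbationsInL}. You also correctly identify the obstruction (plain truncations $\L^{\geq 1/n}$ may fail \eqref{eq: theGeneralCoercivityConditionOfSymbol}, cf.\ Remark \ref{rem: thereExistProblematicLevyoperators}) and the cure (inject a fractional Laplacian). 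The difference lies in the implementation, and your one-step diagonal has a gap.

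As written, $\mu_n = \mu^{\geq 1/n} + \delta_n\,\dd\mu_\alpha$ is \emph{not} finite (the fractional-Laplacian measure $\mu_\alpha$ has infinite mass), so Theorem \ref{thm: existenceForBoundedL} does not apply. If you also truncate $\mu_\alpha$ at $1/n$, then $m_n \geq \delta_n\, m_\alpha^{\geq 1/n}$, but each $m_\alpha^{\geq 1/n}$ is bounded (Lemma \ref{lem: zeroOrderLevyOperatorsHaveBoundedSymbols}), and proving $\liminf_{|\xi|,n\to\infty}\delta_n\, m_\alpha^{\geq 1/n}(\xi)=\infty$ while $\delta_n\to 0$ requires a \emph{quantitative} lower bound on $m_\alpha^{\geq 1/n}(\xi)$ in both $n$ and $|\xi|$ that Lemma \ref{lem: absolutelyContinuousMeasureHasNiceSymbol} does not supply. (Also, since multipliers add, $m_n$ dominates the \emph{maximum} of $m^{\geq 1/n}$ and $\delta_n m_\alpha^{\geq 1/n}$, not the minimum.) The tuning you allude to is therefore not the innocent bookkeeping you suggest.

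The paper avoids this by decoupling into two passes. First, for fixed $\varepsilon>0$, set $\L_\varepsilon \coloneqq \L - \varepsilon(-\Delta)^{1/2}$ and take $\L_n\coloneqq \L_\varepsilon^{\geq 1/n}$: since $\mu_\varepsilon$ now has infinite absolutely continuous part, Lemma \ref{lem: absolutelyContinuousMeasureHasNiceSymbol} gives \eqref{eq: theGeneralCoercivityConditionOfSymbol} directly, no tuning required, and one obtains existence for each $\L_\varepsilon$. Second, let $\varepsilon_n\downarrow 0$: the multipliers satisfy $m_{\varepsilon_n}(\xi)=m(\xi)+\varepsilon_n|\xi|\geq m(\xi)$, so \eqref{eq: theGeneralCoercivityConditionOfSymbol} follows trivially from \eqref{muassumption2}, and \eqref{muassumptionlim} is immediate. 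Corollary \ref{cor: jointCorecivityImpliesPrecompactnessOfb(u_n)} and Proposition \ref{prop: stabilityOfEntropySolutionsWithRespectToPerturbationsInL} are invoked twice, but each time with hypotheses that are verified on sight. Your diagonal approach could likely be repaired via explicit scaling bounds for $m_\alpha^{\geq 1/n}$, but the two-step route eliminates that labor entirely.
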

\begin{proof}
Fix $\varepsilon>0$ and introduce the 
 non-degenerate  Lévy operator
\begin{equation}\label{eq: mollifiedLevyOperator}
    \L_{\varepsilon}\coloneqq \L-\varepsilon(-\Delta)^{\frac{1}{2}},\quad \Longleftrightarrow\quad \dd\mu_{\varepsilon}(z)\coloneqq \dd \mu(z) + \varepsilon\frac{c_d}{|z|^{d+1}},
\end{equation}
where $c_d>0$ is some dimension-dependent constant. In view of Lemma \ref{lem: absolutelyContinuousMeasureHasNiceSymbol} (and Remark \ref{rem: thereExistProblematicLevyoperators}), it will be easier to prove existence for $\L_\varepsilon$: We do that first, and then extend the result by letting $\varepsilon\to 0$.
Define the bounded operators $\L_{n}\coloneqq \L^{\geq 1/n}_{\varepsilon}$ and let $(u_{n})_{n\in\N}$ denote the corresponding entropy solutions of \eqref{E} provided by Theorem \ref{thm: existenceForBoundedL}. We note two things. First, clearly $\L_n\to\L_\varepsilon$ in the sense of \eqref{muassumptionlim}. Secondly, by Lemma \ref{lem: absolutelyContinuousMeasureHasNiceSymbol}, the multipliers $(m_n)_{n\in\N}$ corresponding to $(\L_n)_{n\in\N}$ satisfy
\begin{equation}\label{eq: jointCoercivityConditionForExistenceProof}
    \liminf_{|\xi|,n\to\infty}m_n(\xi)=\infty,
\end{equation}
and so Corollary \ref{cor: jointCorecivityImpliesPrecompactnessOfb(u_n)} tells us that $(b(u_n))_{n\in \N}$ is precompact in $L^2(Q)$. Applying the stability result Proposition \ref{prop: stabilityOfEntropySolutionsWithRespectToPerturbationsInL}, we conclude that $(u_n)_{n\in\N}$ converges to the entropy solution $u_{\varepsilon}$ of \eqref{E} with Lévy operator $\L_{\varepsilon}$. As $\varepsilon>0$ was arbitrary, we have existence for all $(\L_{\varepsilon})_{\varepsilon>0}$ as defined in \eqref{eq: mollifiedLevyOperator}.

Next, pick a sequence $(\varepsilon_n)_{n\in\N}$ such that $\varepsilon_1>\varepsilon_2>\dots \to 0$, and redefine $u_n$ to be the entropy solution of \eqref{E} corresponding to the Lévy operator $\L_{\varepsilon_n}$. Again, it is easy to see that $\L_{\varepsilon_n}\to \L$ in sense of \eqref{muassumptionlim}, and we claim that the multipliers satisfy \eqref{eq: jointCoercivityConditionForExistenceProof} also here: Since $m_1\geq m_2\geq \dots\geq m$ where $m$ is the multiplier of (the original) $\L$, we obtain
\begin{equation*}
     \liminf_{|\xi|,n\to\infty}m_n(\xi)\geq \liminf_{|\xi|\to\infty}m(\xi)=\infty,
\end{equation*}
where the latter equality is \eqref{muassumption2}. As before, we get precompactness of $(b(u_n))_{n\in \N}\subset L^2(Q)$ from Corollary \ref{cor: jointCorecivityImpliesPrecompactnessOfb(u_n)}, and so applying the stability result Proposition \ref{prop: stabilityOfEntropySolutionsWithRespectToPerturbationsInL} we get the desired conclusion: $(u_n)_{n\in\N}$ converges to the entropy solution $u$ of \eqref{E} with Lévy operator $\L$. 
\end{proof}

\begin{remark}
     While we here mainly approximated a Lévy operator $\L$ using its truncations, there are other natural ways of doing so. Denoting the multiplier of $\L$ by $m$, it can be shown that $\frac{m}{1+\varepsilon m}$ is, for any $\varepsilon>0$, the multiplier of a bounded Lévy operator that converges to $\L$ in the sense of \eqref{muassumptionlim} as $\varepsilon\to 0$. (More generally, the same holds for $(g(0)-g(\varepsilon m))/(g'(0)\varepsilon)$ whenever $g\colon \R_+\to\R_+$ is completely monotonic \cite{BeFo75,Schilling2012}.) A small benefit of this approximation is that the joint coercivity condition \eqref{eq: jointCoercivityConditionForExistenceProof} is automatically satisfied whenever $m$ satisfies \eqref{muassumption2}, allowing one to skip the  $\varepsilon$-approximation argument  
     in the previous proof. That said, the approximation offers no improvement of our existence result.
     \end{remark}


\subsection*{Acknowledgments}
This research has been supported by the Research Council of Norway through different grants: All authors were supported by the Toppforsk (research excellence) grant agreement no. 250070 `Waves and Nonlinear Phenomena (WaNP)', OM is supported by grant agreement no. 301538 `Irregularity and Noise In Continuity Equations' (INICE), JE was supported by the MSCA-TOPP-UT grant agreement no. 312021, and ERJ is supported by grant agreement no. 325114 `IMod. Partial differential equations, statistics and data: An interdisciplinary approach to data-based modelling'. In addition, JE received funding from the European Union’s Horizon 2020 research and innovation programme under the Marie Sk{\l}odowska-Curie grant agreement no. 839749 `Novel techniques for quantitative behavior of convection-diffusion equations (techFRONT)'. Moreover, OM is supported by the French Agence Nationale de la Recherche (ANR) under grant number ANR-23-CE40-0015 (ISAAC).


\appendix


\section{Some results on Lévy operators and their multipliers}
\label{sec:LevyOperatorsInducingTranslationRegularity}

We use the following normalization of the Fourier transform 
\begin{equation}\label{eq: theFourierTransform}
    \hat{\phi}(\xi)\coloneqq \int_{\R^d}\phi(x)e^{-i\xi\cdot x}\dd x \quad\implies\quad  \phi(x)=\frac{1}{(2\pi)^d} \int_{\R^d}\hat{\phi}(\xi)e^{i\xi\cdot x}\dd \xi
\end{equation}
for $\phi\in L^2(\R^d)$. On the Fourier side, a symmetric Lévy operator $\L$ on $\R^d$ acts like a multiplication operator: Its multiplier is a continuous, nonpositive, and symmetric function $-m\colon\R^d\to\R$, given by
{\def\theequation{\ref{eq: definitionOfSymbolOfLevyOperator}}
\begin{align}
    m(\xi)\coloneqq  \int_{\R^d}\Big( 1 - \cos(\xi\cdot z)\Big)\dd\mu(z),
\end{align}
\addtocounter{equation}{-1}
}
\!\!\!\!where $\mu$ is the Lévy measure of $\L$. (By abuse of language, we also refer to $m$ as the multiplier of $\L$.) The integral in \eqref{eq: definitionOfSymbolOfLevyOperator} is well-defined because the integrand is bounded by $(\frac{1}{2}|\xi|^2|z|^2)\wedge 2$ and because of \eqref{muassumption}. The alleged connection between $\L$ and $m$ is easily proved:
\begin{lemma}\label{lem: fourierCharacterizationsOfLAndB}
   Let $\L$ and $m$ be as in the above setting. Then
    \begin{equation}\label{eq: mIsTheSymbol}
         \widehat{\L[\phi]}(\xi) = {-m}(\xi)\hat{\phi}(\xi), \qquad \forall \phi\in C^\infty_c(\R^d).
    \end{equation}
Moreover, with $H^\L_0(\Omega)$ as defined in \eqref{eq: definitionOfLevySpaceNorm}, we have 
    \begin{equation}    \label{eq: theLevyHilbertNormOnTheForuierSide}
 \int_{\R^d}\B[\phi,\phi]\dd x = \frac{1}{(2\pi)^{d}}\int_{\R^d}m(\xi)|\hat{\phi}(\xi)|^2\dd \xi, \qquad \forall \phi\in H^\L_0(\Omega).
    \end{equation}
In particular, the norm of $H^\L_0(\Omega)$ can be written $\norm{\phi}{H^{\L}_0}^2=(2\pi)^{-d}\int_{\R^d}\big(1+m(\xi)\big)|\hat{\phi}(\xi)|^2\dd \xi$.
\end{lemma}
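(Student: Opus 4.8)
The plan is to establish the multiplier identity \eqref{eq: mIsTheSymbol} first, then derive the energy identity \eqref{eq: theLevyHilbertNormOnTheForuierSide} directly, and finally read off the norm formula using Plancherel's theorem. For \eqref{eq: mIsTheSymbol}, fix $\phi\in C^\infty_c(\R^d)$ and split $\L=\L^{<1}+\L^{\geq1}$. On $\{|z|\geq1\}$ the measure $\mu$ is finite, so $\L^{\geq1}[\phi]\in L^1(\R^d)$ and Fubini's theorem gives $\widehat{\L^{\geq1}[\phi]}(\xi)=\hat\phi(\xi)\int_{|z|\geq1}(e^{i\xi\cdot z}-1)\dd\mu(z)$, whose imaginary part vanishes by symmetry of $\mu$, leaving $-\hat\phi(\xi)\int_{|z|\geq1}(1-\cos(\xi\cdot z))\dd\mu(z)$. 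For the singular part I would use the symmetry of $\mu$ to rewrite the principal value as the absolutely convergent integral
\[
\L^{<1}[\phi](x)=\tfrac12\int_{|z|<1}\big(\phi(x+z)+\phi(x-z)-2\phi(x)\big)\dd\mu(z),
\]
where the bracket is bounded by $C_\phi(|z|^2\wedge1)$ with $C_\phi$ depending only on $\|D^2\phi\|_{L^\infty}$ and $\supp\phi$; since $\int(|z|^2\wedge1)\dd\mu<\infty$, Fubini applies again, and using $\widehat{\phi(\cdot+z)+\phi(\cdot-z)-2\phi}(\xi)=-2(1-\cos(\xi\cdot z))\hat\phi(\xi)$ one gets $\widehat{\L^{<1}[\phi]}(\xi)=-\hat\phi(\xi)\int_{|z|<1}(1-\cos(\xi\cdot z))\dd\mu(z)$. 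Adding the two contributions and recalling \eqref{eq: definitionOfSymbolOfLevyOperator} yields \eqref{eq: mIsTheSymbol}.

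For \eqref{eq: theLevyHilbertNormOnTheForuierSide}, take $\psi=\phi$ in \eqref{def:blform}: the integrand $(\phi(x+z)-\phi(x))^2$ is nonnegative, so the principal value is just the monotone limit $\B[\phi,\phi](x)=\tfrac12\int_{\R^d}(\phi(x+z)-\phi(x))^2\dd\mu(z)\in[0,\infty]$, defined for any measurable $\phi$. By Tonelli's theorem, $\int_{\R^d}\B[\phi,\phi]\dd x=\tfrac12\int_{\R^d}\|\phi(\cdot+z)-\phi\|_{L^2(\R^d)}^2\dd\mu(z)$, and Plancherel gives $\|\phi(\cdot+z)-\phi\|_{L^2}^2=(2\pi)^{-d}\int_{\R^d}2(1-\cos(\xi\cdot z))|\hat\phi(\xi)|^2\dd\xi$. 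A second use of Tonelli to interchange the $z$- and $\xi$-integrals, together with \eqref{eq: definitionOfSymbolOfLevyOperator}, produces
\[
\int_{\R^d}\B[\phi,\phi]\dd x=\frac{1}{(2\pi)^d}\int_{\R^d}m(\xi)|\hat\phi(\xi)|^2\dd\xi,
\]
an identity between elements of $[0,\infty]$; in particular both sides are finite precisely when $\phi\in H^\L_0(\Omega)$, so \eqref{eq: theLevyHilbertNormOnTheForuierSide} holds. The norm formula is then immediate: since $\phi\in H^\L_0(\Omega)$ is identified with its zero extension, $\|\phi\|_{L^2(\Omega)}^2=(2\pi)^{-d}\int_{\R^d}|\hat\phi(\xi)|^2\dd\xi$ by Plancherel, and adding this to \eqref{eq: theLevyHilbertNormOnTheForuierSide} gives $\|\phi\|_{H^\L_0}^2=(2\pi)^{-d}\int_{\R^d}(1+m(\xi))|\hat\phi(\xi)|^2\dd\xi$.

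The only genuinely delicate point is the interchange of the Fourier transform with the singular integral defining $\L$ in the first step: the symmetrization $\phi(x+z)+\phi(x-z)-2\phi(x)$ is exactly what turns the conditionally convergent principal value into an absolutely convergent integral to which Fubini applies, the bound by $C_\phi(|z|^2\wedge1)$ being what couples the near-origin estimate (via $\|D^2\phi\|_{L^\infty}$ and $\int_{|z|<1}|z|^2\dd\mu<\infty$) to the far-field estimate (via boundedness of $\L^{\geq1}$ on $L^1$). Everything else reduces to routine applications of Tonelli's theorem and Plancherel's identity.
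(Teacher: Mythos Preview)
Your proof is correct and follows essentially the same approach as the paper's: symmetrize the integrand to justify Fubini for \eqref{eq: mIsTheSymbol}, then use Tonelli and Plancherel for \eqref{eq: theLevyHilbertNormOnTheForuierSide}. The only cosmetic difference is that you split $\L=\L^{<1}+\L^{\geq1}$ and symmetrize only the singular part, whereas the paper symmetrizes $\L$ as a whole from the start and uses the $L^1$-bound $\int_{\R^d}|\tfrac12\phi(x+z)+\tfrac12\phi(x-z)-\phi(x)|\dd x\leq (\tfrac{|z|^2}{2}\|D^2\phi\|_{L^1})\wedge(2\|\phi\|_{L^1})$ to justify Fubini in one stroke; your pointwise bound $C_\phi(|z|^2\wedge1)$ achieves the same thing once combined with the compact support of $\phi$.
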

\begin{proof}
For $\phi\in C^\infty_c(\R^d)$ we compute
\begin{align*}
     \widehat{\L[\phi]}(\xi) =&\, \int_{\R^d}\int_{\R^d}\Big(\tfrac{1}{2}\phi(x+z)+\tfrac{1}{2}\phi(x-z)-\phi(x)\Big)\dd \mu(z)e^{- i \xi\cdot x} \dd x\\
    =&\, \int_{\R^d}\int_{\R^d}\Big(\tfrac{1}{2}\phi(x+z)+\tfrac{1}{2}\phi(x-z)-\phi(x)\Big)e^{-i \xi\cdot x} \dd x\dd \mu(z)\\
    =&\, \int_{\R^d}\Big(\tfrac{1}{2}(e^{i \xi\cdot z} + e^{-i \xi\cdot z})-1\Big)\dd\mu(x)\hat{\phi}(\xi)\\
    =&\, \int_{\R^d}\Big(\cos(\xi\cdot z)-1\Big)\dd\mu(z)\hat{\phi}(\xi)={-m(\xi)\hat{\phi}(\xi)}
\end{align*}
where the use of Fubini's theorem is justified because
\begin{equation*}
 \int_{\R^d}|\tfrac{1}{2}\phi(x+z)+\tfrac{1}{2}\phi(x-z)-\phi(x)|\dd x\leq \bigg(\frac{|z|^2}{2}\norm{D^2\phi}{L^1(\R^d)}\bigg)\wedge\Big( 2\norm{\phi}{L^1(\R^d)}\Big).
\end{equation*}
For the second part, let $\phi\in H^\L_0(\Omega)$. Exploiting the identity $\frac{1}{2}|1-e^{i\xi\cdot z}|^2=1-\cos(\xi\cdot z)$ and Plancherel's Theorem, we have 
\begin{equation*}
\begin{split}
    \int_{\R^d}\B[\phi,\phi]\dd x = &\,\int_{\R^d}\int_{\R^d}\tfrac{1}{2}\big(\phi(x)-\phi(x+z)\big)^2\dd\mu(z)\dd x\\
    = &\,\frac{1}{(2\pi)^d}\int_{\R^d}\int_{\R^d}\tfrac{1}{2}|1-e^{i \xi\cdot z}|^2|\hat{\phi}(\xi)|^2\dd \xi \dd\mu(z)\\
    =&\,\frac{1}{(2\pi)^{d}}\int_{\R^d}\int_{\R^d}\Big(1-\cos(\xi\cdot z)\Big)\dd\mu(z)|\hat{\phi}(\xi)|^2\dd\xi
    =\frac{1}{(2\pi)^{d}}\int_{\R^d}m(\xi)|\hat{\phi}(\xi)|^2\dd \xi,    
\end{split}
\end{equation*}
where Fubini's theorem applies directly since the integrands are nonnegative. The last part of the lemma follows since $\phi\in H^{\L}_0(\Omega)$ is zero outside $\Omega$, and so $\norm{\phi}{L^2(\Omega)}^2=\norm{\phi}{L^2(\R^d)}^2=(2\pi)^{-d}\norm{\hat{\phi}}{L^2(\R^d)}^2$, which combined with the last identity gives \eqref{eq: theLevyHilbertNormOnTheForuierSide}.
\end{proof}

We now give a necessary and sufficient condition for when $H^\L_0(\Omega)$ compactly embeds in $L^2(\Omega)$.
{\def\thetheorem{\ref{prop: coerciveSymbolEqualsCompactEmbeddingOfLevySpaceInL2}}
\begin{proposition}
Let $\L$ be a symmetric Lévy operator, $m$ its multiplier, and let $H_0^{\L}(\Omega)$ be as in \eqref{eq: definitionOfLevySpaceNorm}.  Then the embedding $H^{\L}_0(\Omega)\hookrightarrow L^2(\Omega)$ is compact if, and only if, 
{\def\theequation{\ref{eq: criterionOnMeasureForCompactness}}
\begin{align}
    \liminf_{|\xi|\to\infty}m(\xi)=\infty,
\end{align}
which is precisely assumption \eqref{muassumption2}.
}
\end{proposition}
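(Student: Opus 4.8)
The plan is to work entirely on the Fourier side, using the characterization of the $H^\L_0(\Omega)$-norm from Lemma \ref{lem: fourierCharacterizationsOfLAndB}, namely $\norm{\phi}{H^\L_0}^2 = (2\pi)^{-d}\int_{\R^d}(1+m(\xi))|\hat\phi(\xi)|^2\dd\xi$, and to compare the problem to the classical Rellich--Kondrachov situation (where $m(\xi)=|\xi|^2$). For the \emph{sufficiency} direction, assume \eqref{eq: criterionOnMeasureForCompactness} and take a sequence $(\phi_k)_k$ bounded in $H^\L_0(\Omega)$. Since bounded sequences in $H^\L_0(\Omega)$ are in particular bounded in $L^2(\Omega)$, after passing to a subsequence we may assume $\phi_k \rightharpoonup \phi$ weakly in $L^2$; because all $\phi_k$ are supported in the fixed bounded set $\Omega$, the Fourier transforms $\hat\phi_k$ are equicontinuous and locally bounded (Arzelà--Ascoli), so $\hat\phi_k\to\hat\phi$ uniformly on compact sets. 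The strategy is then a ``high-frequency tails are small'' estimate: split $\int_{\R^d}|\hat\phi_k-\hat\phi_j|^2\dd\xi$ at $|\xi|=R$; on $|\xi|<R$ the integrand goes to zero uniformly, while on $|\xi|\geq R$ we bound $|\hat\phi_k-\hat\phi_j|^2 \leq \frac{1}{\rho(R)}m(\xi)|\hat\phi_k-\hat\phi_j|^2$ where $\rho(R):=\inf_{|\xi|\geq R}m(\xi)\to\infty$ by \eqref{eq: criterionOnMeasureForCompactness}, and then control $\int m(\xi)|\hat\phi_k-\hat\phi_j|^2\dd\xi$ by the uniform $H^\L_0$-bound. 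This shows $(\hat\phi_k)$ is Cauchy in $L^2(\R^d)$, hence $(\phi_k)$ is Cauchy (Plancherel), giving compactness. This is essentially the argument already carried out in the proof of Proposition \ref{prop: uniformTranslationContinuityForASequenceOfFiniteEnergies}, specialized to a single operator.

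For the \emph{necessity} direction, assume \eqref{eq: criterionOnMeasureForCompactness} fails, i.e. there is a constant $K<\infty$ and a sequence $|\xi_k|\to\infty$ with $m(\xi_k)\leq K$. The goal is to construct a bounded sequence in $H^\L_0(\Omega)$ with no $L^2(\Omega)$-convergent subsequence. Fix any nonzero $\psi\in C_c^\infty(\Omega)$ and set $\phi_k(x):=\psi(x)e^{i\xi_k\cdot x}$ (or its real/imaginary part, to stay real-valued — or simply allow complex-valued functions throughout, which the Fourier framework accommodates). These are all supported in $\Omega$, have constant $L^2$-norm, and $\phi_k\rightharpoonup 0$ weakly in $L^2$ (oscillation), so no subsequence converges strongly to a nonzero limit, and the only possible strong limit $0$ is excluded since $\norm{\phi_k}{L^2}=\norm{\psi}{L^2}>0$. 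It remains to check $\sup_k\norm{\phi_k}{H^\L_0}<\infty$. Using the definition of $\B$ directly, $\int_{\R^d}\B[\phi_k,\phi_k]\dd x = \frac12\int\int|\psi(x+z)e^{i\xi_k\cdot z}-\psi(x)|^2\dd\mu(z)\dd x$; expand the square and split the $z$-integral into $|z|<1$ and $|z|\geq 1$. On $|z|\geq 1$ the bound is $\leq 4\norm{\psi}{L^2}^2\,\mu(\{|z|\geq 1\})$, uniform in $k$. On $|z|<1$, write $\psi(x+z)e^{i\xi_k\cdot z}-\psi(x) = (\psi(x+z)-\psi(x))e^{i\xi_k\cdot z} + \psi(x)(e^{i\xi_k\cdot z}-1)$; the first piece contributes $\lesssim \norm{\nabla\psi}{L^\infty}^2|z|^2$ (uniform in $k$), and the second piece contributes exactly $\norm{\psi}{L^2}^2\int_{|z|<1}|e^{i\xi_k\cdot z}-1|^2\dd\mu(z) = 2\norm{\psi}{L^2}^2\int_{|z|<1}(1-\cos(\xi_k\cdot z))\dd\mu(z) \leq 2\norm{\psi}{L^2}^2\, m(\xi_k)\leq 2K\norm{\psi}{L^2}^2$. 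Hence $\sup_k\norm{\phi_k}{H^\L_0}<\infty$, so the embedding is not compact.

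The main obstacle, and the step requiring the most care, is the necessity direction — specifically the verification that the oscillatory test functions $\phi_k=\psi e^{i\xi_k\cdot x}$ remain bounded in $H^\L_0(\Omega)$. The delicate point is that naively one might fear the Lévy energy of $e^{i\xi_k\cdot x}$ blows up; the key realization is that the \emph{localized} energy is governed not by $|\xi_k|$ but by $m(\xi_k)$ together with a $\xi_k$-independent remainder coming from the smoothness and compact support of the cutoff $\psi$. Making the splitting of the $\mu(z)$-integral at $|z|=1$ clean, and correctly isolating the term that reproduces $m(\xi_k)$ from the error terms controlled by $\int(|z|^2\wedge 1)\dd\mu(z)<\infty$, is where one must be precise. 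One should also note that the equivalence ``\eqref{eq: criterionOnMeasureForCompactness} $\iff$ \eqref{muassumption2}'' is just a restatement, since $m$ is continuous and nonnegative, so $\liminf_{|\xi|\to\infty}m(\xi)=\infty$ is the same as $m(\xi)\to\infty$ as $|\xi|\to\infty$.
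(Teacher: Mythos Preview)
Your proof is correct in both directions, and the overall architecture matches the paper's (high-frequency tail control for sufficiency, modulated cutoffs $\psi(x)e^{i\xi_k\cdot x}$ for necessity), but the execution differs in instructive ways.

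For sufficiency, the paper establishes $L^2$-equicontinuity under translations and invokes Fr\'echet--Kolmogorov, whereas you argue Cauchy directly: weak $L^2$-convergence plus the equicontinuity of $\hat\phi_k$ (from the common compact support) gives local uniform convergence of Fourier transforms, and the tail $\{|\xi|\geq R\}$ is handled identically via $\rho(R)=\inf_{|\xi|\geq R}m(\xi)\to\infty$. The two packagings are equivalent in difficulty.

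For necessity, the paper works on the Fourier side: it first reduces to $\mu$ supported in the unit ball (so that $m$ is analytic with uniformly bounded Hessian), takes the cutoff $\varphi$ radially symmetric, and bounds $\int m(\xi+\xi_n)|\hat\varphi(\xi)|^2\dd\xi$ by Taylor-expanding $m$ about $\xi_n$; the linear term $\nabla m(\xi_n)\cdot\xi$ then integrates to zero by symmetry of $\hat\varphi$. Your physical-space computation is more elementary: splitting $\psi(x+z)e^{i\xi_k\cdot z}-\psi(x)$ as $(\psi(x+z)-\psi(x))e^{i\xi_k\cdot z}+\psi(x)(e^{i\xi_k\cdot z}-1)$ isolates exactly the term that reproduces $m(\xi_k)$, and the remainder is controlled by $\int(|z|^2\wedge 1)\dd\mu<\infty$ with no dependence on $\xi_k$. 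This avoids both the analyticity reduction and the symmetry requirement on the cutoff, at the cost of a small care with the cross term (handled by $|A+B|^2\leq 2|A|^2+2|B|^2$). Either route works; yours is arguably cleaner.
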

}\addtocounter{theorem}{-1}
\begin{proof}
Suppose first that $m$ satisfies \eqref{eq: criterionOnMeasureForCompactness}, and define $\rho\colon[0,\infty)\to[0,\infty)$ by
\begin{equation*}
    \rho(R)\coloneqq \inf_{|\xi|\geq R}m(\xi),
\end{equation*}
which must tend to infinity as $R\to\infty$.
Since $L^2(\Omega)$ is a metric space, we need only prove that a bounded sequence $(\phi_n)_{n\in\N}\subset H^{\L}_0(\Omega)$ admits a subsequence converging in $L^2(\R^d)$. For such a sequence, observe that we for any $n\in\N$, $y\in\R^d$ and any $R>0$ have by Plancherel's theorem
\begin{align*}
    \norm{\phi_n-\phi_n(\cdot + y)}{L^2(\R^d)}^2 = &\,\frac{1}{(2\pi)^{d}}\int_{\R^d}|1-e^{i\xi \cdot y}|^2|\hat{\phi}_n(\xi)|^2\dd\xi\\
    \leq &\,\frac{(R|y|)^2}{(2\pi)^{d}}\int_{|\xi|<R}|\hat{\phi}_n(\xi)|^2\dd\xi + \frac{4}{\rho(R)(2\pi)^{d}}\int_{|\xi|\geq R}m(\xi)|\hat \phi_n(\xi)|^2\dd\xi \\
    \leq &\,\bigg[(R|y|)^2 + \frac{4}{\rho(R)}\bigg]\norm{\phi_n}{H_0^{\L}(\Omega)}^2.
\end{align*}
Since we assume $\sup_{n} \norm{\phi_n}{H_0^{\L}(\Omega)}^2\eqqcolon C<\infty$, the previous calculation implies that
\begin{align*}
   \lim_{y\to0} \sup_{n} \norm{\phi_n-\phi_n(\cdot + y)}{L^2(\R^d)}^2 \leq\frac{4C}{\rho(R)}\to0
\end{align*}
as $R\to\infty$; we conclude that the family $(\phi_n)_{n\in\N}$ is equicontinuous with respect to translations. The family is also `equitight' as all functions are supported in $\Omega$. By the Fréchet–Kolmogorov theorem, $(\phi_n)_{n\in\N}$ then admits a sub-sequence converging in $L^2(\Omega)$.

Next, assume $m$ fails to satisfy \eqref{eq: criterionOnMeasureForCompactness}. Then there is a sequence $(\xi_n)_{n\in\N}\subset \R^d$ such that $\lim_{n\to \infty}|\xi_{n}|=\infty$ and $\sup_{n}m(\xi_n)\leq c_0$ for some finite $c_0$. We shall construct a bounded sequence $(\phi_n)_{n\in\N}\subset H^{\L}_0(\Omega)$ for which there is no sub-sequence converging in $L^2(\Omega)$. Without loss of generality, we assume the corresponding measure $\mu$ of $\L$ to be supported in the unit ball of $\R^d$; after all, the space induced by $\L$ is equivalent to that of $\L^{<1}=\L - \L^{\geq 1}$ because $\L^{\geq 1}$ is a zero order operator. We then see from its definition that $m$ is an analytic function whose double derivative in any direction $\nu\in \mathbb{S}^{d-1}$ satisfies
\begin{align*}
    \bigg|\frac{\partial^2}{\partial s^2}m(\xi+s\nu)\bigg|_{s=0}\leq \int_{|z|\leq 1}|z|^2d\mu(z)\eqqcolon c_1.
\end{align*}
Without loss of generality, assume also that $\Omega$ includes the origin in its interior and select a non-zero, radially symmetric function $\varphi\in C^\infty_c(\Omega)$. Consider the sequence $(\phi_n)_{n\in\N}\subset L^2(\Omega)$ defined by
\begin{align*}
    \phi_n(x)\coloneqq \cos(\xi_n\cdot x)\varphi(x), \quad \implies \quad \widehat{\phi_n}(\xi) = \tfrac{1}{2}(\hat{\varphi}(\xi+\xi_n)+\hat{\varphi}(\xi-\xi_n)),
\end{align*}
where $(\xi_n)_{n\in\N}$ is the sequence from before. Using \eqref{eq: theLevyHilbertNormOnTheForuierSide}, the symmetry of $m$ and $\hat\varphi$, and performing two changes of variables we compute 
\begin{align*}
(2\pi)^d\int_{\R^d}\B[\phi_n,\phi_n]\dd x =&\, \int_{\R^d}m(\xi)\frac{|\hat{\varphi}(\xi+\xi_n)+\hat{\varphi}(\xi-\xi_n)|^2}{4}\dd \xi\\
   \leq &\, \int_{\R^d}m(\xi + \xi_n)|\hat{\varphi}(\xi)|^2\dd\xi\\
    \leq &\, \int_{\R^d}\Big(c_0 + \nabla m(\xi_n)\cdot \xi + \frac{c_1}{2}|\xi|^2\Big) |\hat{\varphi}(\xi)|^2\dd\xi,
\end{align*}
where the last inequality follows by Taylor expansion. As $\varphi$ is smooth and rapidly decaying, so is $\hat{\varphi}$ and thus the last integral is well-defined. Moreover, since $\varphi$ is symmetric (about zero) so is $\hat{\varphi}$ and consequently $\int_{\R^d}\nabla m(\xi_n)\cdot\xi|\hat{\varphi}(\xi)|^2\dd \xi=0$. We conclude
\begin{align*}
    \sup_{n}\norm{\phi_n}{H^\L_0(\Omega)}^2\leq \frac{1}{(2\pi)^d}\int_{\R^d}\Big(1+c_0 + \frac{c_1}{2}|\xi^2|\Big)|\hat{\varphi}(\xi)|^2\dd \xi<\infty,
\end{align*}
 that is, the sequence $(\phi_n)_{n\in\N}$ is bounded in $H^{\L}_0(\Omega)$. However, no subsequence of $(\phi_n)_{n\in\N}$ converges in $L^2(\Omega)$ since no subsequence of $(\hat{\varphi}(\cdot-\xi_n))_{n\in\N}$ converges in $L^2(\R^d)$. This concludes the proof.
\end{proof}

\begin{lemma}\label{lem: zeroOrderLevyOperatorsHaveBoundedSymbols}
    Let $\L$ be a symmetric Lévy operator on $\R^d$. Then $\L$ is zero order (its measure $\mu$ finite) if, and only if, the corresponding multiplier $m$ is bounded. We then have the following inequality
    \begin{equation*}
       \mu(\R^d)\leq \norm{m}{L^\infty(\R^d)}\leq 2\mu(\R^d).
    \end{equation*}
\end{lemma}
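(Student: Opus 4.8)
The plan is to prove the two inequalities separately; the claimed equivalence then follows at once, since $\mu(\R^d)<\infty$ exactly when the right-hand quantity is finite and $\norm{m}{L^\infty(\R^d)}<\infty$ exactly when the left-hand quantity is finite.

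\emph{The upper bound $\norm{m}{L^\infty(\R^d)}\le 2\mu(\R^d)$.} This is immediate from \eqref{eq: definitionOfSymbolOfLevyOperator}: since $0\le 1-\cos(\xi\cdot z)\le 2$ for all $\xi,z$, we get $0\le m(\xi)\le 2\mu(\R^d)$ for every $\xi\in\R^d$. In particular, if $\mu$ is finite then $m$ is bounded.

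\emph{The lower bound $\mu(\R^d)\le \norm{m}{L^\infty(\R^d)}$.} Here we do not assume $\mu$ finite. The idea is to average $m$ against a Gaussian: for $t>0$ let $g_t(\xi)\coloneqq (2\pi t)^{-d/2}e^{-|\xi|^2/(2t)}$ be the density of the centered Gaussian on $\R^d$ with covariance $tI_d$, and recall the elementary Fourier identity $\int_{\R^d}\cos(\xi\cdot z)\,g_t(\xi)\dd\xi = e^{-t|z|^2/2}$. Since $1-\cos(\xi\cdot z)\ge 0$, Tonelli's theorem lets us swap the order of integration and, using this identity, obtain
\[
\int_{\R^d} m(\xi)\,g_t(\xi)\dd\xi = \int_{\R^d}\big(1-e^{-t|z|^2/2}\big)\dd\mu(z).
\]
The left-hand side is at most $\norm{m}{L^\infty(\R^d)}\int_{\R^d}g_t\dd\xi = \norm{m}{L^\infty(\R^d)}$. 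On the right-hand side, as $t\to\infty$ the integrand $1-e^{-t|z|^2/2}$ increases monotonically to $1$ for every $z\neq 0$, so by the monotone convergence theorem together with $\mu(\{0\})=0$ we get $\int_{\R^d}(1-e^{-t|z|^2/2})\dd\mu(z)\to\mu(\R^d\setminus\{0\})=\mu(\R^d)$. Hence $\mu(\R^d)\le\norm{m}{L^\infty(\R^d)}$; in particular, boundedness of $m$ forces $\mu(\R^d)<\infty$, which completes the equivalence.

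The argument is essentially routine; the only steps needing (mild) care are the interchange of integrals — legitimate because the integrand is nonnegative — and using monotone rather than dominated convergence at the last step, which avoids having to know beforehand that $\mu$ is finite. One could equally use the box average $\frac{1}{(2R)^d}\int_{[-R,R]^d}m(\xi)\dd\xi=\int_{\R^d}\big(1-\prod_{j=1}^d\tfrac{\sin(Rz_j)}{Rz_j}\big)\dd\mu(z)$ and Fatou's lemma as $R\to\infty$, but the Gaussian version is marginally cleaner.
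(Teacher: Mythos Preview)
Your proof is correct. The upper bound is handled identically to the paper; the lower bound uses a genuinely different (and slightly slicker) averaging.

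The paper averages $m$ over balls: it shows
\[
\norm{m}{L^\infty}\ \ge\ \frac{1}{V_d(R)}\int_{|\xi|\le R} m(\xi)\dd\xi
\ =\ \int_{\R^d}\Big(1-\frac{1}{V_d(1)}\int_{|\xi|\le 1}\cos(\xi\cdot Rz)\dd\xi\Big)\dd\mu(z),
\]
invokes the Riemann--Lebesgue lemma to see that the inner integral tends to $0$ for each $z\neq 0$, and then passes to the limit in the outer integral. The integrand here is \emph{not} monotone in $R$, so one is implicitly appealing to Fatou (the integrand is nonnegative) rather than monotone or dominated convergence.

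Your Gaussian average produces the closed form $1-e^{-t|z|^2/2}$, which \emph{is} monotone in $t$, so monotone convergence applies directly and you never need Riemann--Lebesgue. Conceptually both arguments are the same --- average $m$ against a probability density whose Fourier transform tends to zero pointwise --- but your choice of kernel makes the limiting step cleaner and, as you note, avoids any case distinction on whether $\mu$ is finite. The box-average variant you mention at the end is essentially the paper's argument (with cubes instead of balls).
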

\begin{proof}
    From its definition, we see that $m(\xi)\leq 2\mu(\R^d)$ when the latter quantity is finite. To prove the other direction, let $V_d(R)=V_d(1)R^d$ denote the volume of a ball in $\R^d$ of radius $R$, and note for $R>0$ that
\begin{align*}
     \norm{m}{L^{\infty}} \geq&\,  \int_{|\xi|\leq R}m(\xi)\frac{\dd \xi}{V_d(R)} =  \int_{|\xi|\leq R}\int_{\R^d}\Big(1-\cos(\xi\cdot z)\Big)\dd\mu(z)\frac{\dd \xi}{V_d(R)}\\
      =&\, \int_{\R^d} \int_{|\xi|\leq R}\Big(1-\cos(\xi\cdot z)\Big)\frac{\dd \xi}{V_d(R)}\dd\mu(z) =\int_{\R^d} \int_{|\xi|\leq 1}\Big(1-\cos(\xi\cdot Rz)\Big)\frac{\dd \xi}{ V_d(1)}\dd\mu(z)
\end{align*}
where we employed Fubini's theorem. For each $|z|>0$ the value of the inner integral at the end lies in $[0,2]$ and converges (by the Riemann-Lebesgue lemma) to $1$ as $R\to\infty$. Thus, the final integral converges to $\mu(\R^d\setminus \{0\})=\mu(\R^d)$ when $R\to\infty$.
\end{proof}

\begin{lemma}\label{lem: absolutelyContinuousMeasureHasNiceSymbol}
    Let $\L$ be a symmetric Lévy operator on $\R^d$ with measure $\mu$ and let $\mu_a$ denote the absolutely continuous part of $\mu$. If $\mu_a(\R^d)=\infty$ then $\mu$ satisfies \eqref{muassumption2}. Moreover, the sequence of multipliers $(m_n)_{n\in\N}$ corresponding to the truncations $(\L^{\geq(1/n)})_{n\in\N}$ satisfies 
    \begin{equation}\label{eq: jointCoercivityConditionInAppendix}
\liminf_{|\xi|,n\to\infty}m_n(\xi)=\infty,
    \end{equation}
which is the coercivity condition \eqref{eq: theGeneralCoercivityConditionOfSymbol} from Proposition \ref{prop: uniformTranslationContinuityForASequenceOfFiniteEnergies}.
\end{lemma}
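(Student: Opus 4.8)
The plan is to reduce everything to the absolutely continuous part of $\mu$ and to combine the L\'evy condition \eqref{muassumption} with the Riemann--Lebesgue lemma on regions bounded away from the origin. Write the Lebesgue decomposition $\mu=\mu_a+\mu_s$, so that $\dd\mu_a(z)=g(z)\dd z$ for some measurable $g\geq0$ and $\mu\geq\mu_a$. Since the integrand $1-\cos(\xi\cdot z)$ is nonnegative, for every Borel set $E\subseteq\R^d$ one has $m(\xi)\geq\int_E\big(1-\cos(\xi\cdot z)\big)g(z)\dd z$, and likewise $m_n(\xi)\geq\int_{E\cap\{|z|\geq 1/n\}}\big(1-\cos(\xi\cdot z)\big)g(z)\dd z$ for the truncations $\L^{\geq 1/n}$.

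Next I would introduce, for $\delta\in(0,1)$, the set $E_\delta\coloneqq\{z:|z|>\delta\}$ and the quantity $B_\delta\coloneqq\int_{E_\delta}g(z)\dd z=\mu_a(E_\delta)$. Using \eqref{muassumption} — which forces $\mu(\{|z|\geq1\})<\infty$ and $\int_{|z|<1}|z|^2\dd\mu(z)<\infty$ — one checks $B_\delta\leq\mu_a(\{|z|\geq1\})+\delta^{-2}\int_{|z|<1}|z|^2\dd\mu(z)<\infty$, so $g\mathbf{1}_{E_\delta}\in L^1(\R^d)$. On the other hand, because $\mu_a(\{0\})\leq\mu(\{0\})=0$ and $\mu_a(\R^d)=\infty$, continuity of $\mu_a$ from below gives $B_\delta\to\infty$ as $\delta\to0$.

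The heart of the argument is then the following. Fix $A>0$ and pick $\delta$ with $B_\delta\geq2A$. By the Riemann--Lebesgue lemma applied to $g\mathbf{1}_{E_\delta}\in L^1$, the quantity $c_\delta(\xi)\coloneqq\int_{E_\delta}\cos(\xi\cdot z)g(z)\dd z$ tends to $0$ as $|\xi|\to\infty$, so there is $\Xi>0$ with $|c_\delta(\xi)|\leq A$ for $|\xi|\geq\Xi$; hence $m(\xi)\geq B_\delta-c_\delta(\xi)\geq A$ for $|\xi|\geq\Xi$. Since $A$ was arbitrary this gives $\liminf_{|\xi|\to\infty}m(\xi)=\infty$, i.e.\ \eqref{muassumption2}. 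For the joint limit \eqref{eq: jointCoercivityConditionInAppendix}, observe that as soon as $1/n\leq\delta$ we have $E_\delta\subseteq\{|z|\geq1/n\}$, so the \emph{same} estimate yields $m_n(\xi)\geq B_\delta-c_\delta(\xi)\geq A$ for all $n\geq1/\delta$ and all $|\xi|\geq\Xi$; arbitrariness of $A$ then gives $\liminf_{|\xi|,n\to\infty}m_n(\xi)=\infty$. (Alternatively, since $n\mapsto m_n$ is pointwise nondecreasing, one could invoke Remark \ref{remark: aSimplificationToTheComplicatedCorecivityCondition} and only prove $\liminf_{|\xi|\to\infty}m_n(\xi)\to\infty$ as $n\to\infty$.)

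The only genuine obstacle is that $g$ need not be globally integrable — indeed its mass near the origin is infinite, which is exactly the hypothesis $\mu_a(\R^d)=\infty$ — so the Riemann--Lebesgue lemma is not directly available for $g$ itself; the remedy is to work on $E_\delta$, where integrability does hold, while the discarded mass $B_\delta$ can still be made arbitrarily large. Everything else is the same kind of bookkeeping as in the proof of Lemma \ref{lem: zeroOrderLevyOperatorsHaveBoundedSymbols}.
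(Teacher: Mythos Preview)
Your proof is correct and follows essentially the same approach as the paper: both bound $m_n(\xi)$ below by the contribution of $\mu_a$ on an annulus $\{|z|>\delta\}$ (the paper takes $\delta=1/n$), use that $g\mathbf{1}_{E_\delta}\in L^1$ so that Riemann--Lebesgue kills the cosine term, and then send $\delta\to0$ to exploit $\mu_a(\R^d)=\infty$. The only cosmetic difference is that the paper proves \eqref{eq: jointCoercivityConditionInAppendix} first and deduces \eqref{muassumption2} from $m=\sup_n m_n$, concluding via Remark \ref{remark: aSimplificationToTheComplicatedCorecivityCondition}, whereas you handle the joint limit directly by decoupling $\delta$ from $1/n$; both routes are equally valid.
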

\begin{proof}
It suffices to prove \eqref{eq: jointCoercivityConditionInAppendix} since this also implies \eqref{muassumption2} by $\sup_{n}m_n(\xi)=m(\xi)$.
For $n\in \N$ the measure $\mu_a$ is finite on $\{|z|\geq \frac{1}{n}\}$ and so we can do the following splitting of the integral
\begin{align*}
    m_n(\xi)\geq &\, \int_{|z|\geq \frac{1}{n}} \Big(1-\cos(\xi\cdot z)\Big)\dd\mu_a(z) \\
    = &\, \mu_a\big(\{|z|\geq \tfrac{1}{n}\}\big) - \int_{|z|\geq \frac{1}{n}} \cos(\xi \cdot z)\frac{\dd\mu_a}{\dd z}(z)\dd z\\  \to&\,\mu_a\big(\{|z|\geq \tfrac{1}{n}\}\big)-0
\end{align*}
as $|\xi|\to\infty$ by the Riemann--Lebesgue lemma since  $\frac{\dd\mu_a}{\dd z}1_{|z|\geq \tfrac{1}{n}}\in L^1(\R^d)$ by \eqref{muassumption}.  Thus, we get 
\begin{equation*}
    \lim_{n\to\infty}\liminf_{|\xi|\to\infty}m_n(\xi)\geq \lim_{n\to\infty}\mu_a\big(\{|z|\geq \tfrac{1}{n}\}\big) =\mu_a\big(\R^d\setminus\{0\}\big)=\mu_a\big(\R^d\big)=\infty. \qedhere
\end{equation*}
By Remark \ref{remark: aSimplificationToTheComplicatedCorecivityCondition}, this last inequality implies \eqref{eq: jointCoercivityConditionInAppendix} since $m_1\leq m_2\leq \dots$.
\end{proof}
Together, the two previous lemmas imply:
\begin{corollary}\label{cor: allAbsolutelyContinuousMeasuresAreIncludedInOurExistenceResult}
    Let $\mu$ be a symmetric and absolutely continuous Lévy measure on $\R^d$. Then either $\mu$ is finite or it satisfies \eqref{muassumption2}, but not both.
\end{corollary}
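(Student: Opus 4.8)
The plan is to use the two preceding lemmas as a dichotomy. Let $\mu$ be a symmetric, absolutely continuous Lévy measure on $\R^d$; in particular $\mu = \mu_a$, so $\mu(\R^d) = \mu_a(\R^d)$, which is either finite or infinite.

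\emph{Case 1: $\mu(\R^d) < \infty$.} Then $\mu$ is finite and we are in the first alternative. By Lemma \ref{lem: zeroOrderLevyOperatorsHaveBoundedSymbols}, the multiplier $m$ is bounded, hence $\liminf_{|\xi|\to\infty} m(\xi) \le \norm{m}{L^\infty(\R^d)} < \infty$, so \eqref{muassumption2} \emph{fails}. Thus the two alternatives are mutually exclusive in this case.

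\emph{Case 2: $\mu(\R^d) = \infty$.} Since $\mu = \mu_a$, we have $\mu_a(\R^d) = \infty$, and Lemma \ref{lem: absolutelyContinuousMeasureHasNiceSymbol} applies directly to give that $\mu$ satisfies \eqref{muassumption2}. So $\mu$ is not finite but satisfies \eqref{muassumption2}, and again exactly one alternative holds.

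\emph{Expected obstacle.} There is essentially no obstacle here: the corollary is a bookkeeping consequence of the two lemmas. The only point requiring a word of care is the claim ``but not both,'' i.e. that finiteness of $\mu$ and \eqref{muassumption2} cannot hold simultaneously — this is exactly the content of Case 1 via Lemma \ref{lem: zeroOrderLevyOperatorsHaveBoundedSymbols}, since a finite measure forces a bounded multiplier, which is incompatible with $\liminf_{|\xi|\to\infty} m(\xi) = \infty$. (Equivalently, one may cite the earlier remark that the conditions $\mu(\R^d)<\infty$ and \eqref{muassumption2} are mutually exclusive, as noted after Theorem \ref{thm: mainResultExistence}.) The decomposition into the two cases above is exhaustive because, for an absolutely continuous $\mu$, the singular part vanishes and so $\mu(\R^d)$ being finite or infinite is the only distinction.
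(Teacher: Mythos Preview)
Your proof is correct and follows exactly the approach the paper indicates: the corollary is stated immediately after Lemmas \ref{lem: zeroOrderLevyOperatorsHaveBoundedSymbols} and \ref{lem: absolutelyContinuousMeasureHasNiceSymbol} with the one-line justification ``Together, the two previous lemmas imply,'' and your case split simply spells this out.
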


\begin{lemma}[Some examples of badly behaving Lévy operators]\label{lem: examplesOfBadLevyOperators}
    Let $\L$ be a symmetric Lévy operator on $\R^d$ with corresponding Lévy measure $\mu$ and multiplier $m$. The following two scenarios can occur:
    \begin{enumerate}[(a)]
        \item The measure $\mu$ may fail both assumptions ``$\mu(\R^d)<\infty$'' and \eqref{muassumption2} of Theorem \ref{thm: mainResultExistence}. Equivalently (by Lemma \ref{lem: absolutelyContinuousMeasureHasNiceSymbol}), the multiplier may be such that
        \begin{equation*}
            \liminf_{|\xi|\to\infty}m(\xi)< \limsup_{|\xi|\to\infty}m(\xi)=\infty.
        \end{equation*}
        \item When \eqref{muassumption2} is satisfied, the multipliers $(m_n)_{n\in\N}$ corresponding to the truncations $(\L^{\geq 1/n})_{n\in\N}$ may still  fail to satisfy \eqref{eq: jointCoercivityConditionInAppendix}. In this case, a bounded sequence $(\phi_n)_{n\in\N}\subset L^2(\Omega)$ exists such that
        \begin{equation*}
            \sup_{n}\int_{\R^d}\B^{\geq 1/n}[\phi_n,\phi_n]\,\dd x<\infty,
        \end{equation*}
        but for which no subsequence converges.
    \end{enumerate}
\end{lemma}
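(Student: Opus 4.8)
The plan is to exhibit explicit purely atomic (hence singular) Lévy measures $\mu$ realizing each scenario; the common mechanism is that a lacunary frequency $\xi$ with $|\xi|\in 2\pi\cdot 2^{\N}$ satisfies $\cos(\xi\cdot z)=1$ at every atom sitting at a dyadic scale $2^{-k}$ with $k\le N$, so any truncation $\L^{\geq 2^{-N}}$ (which only keeps the coarse atoms) has a vanishing multiplier at such frequencies, whereas the \emph{full} operator does not, because new atoms keep appearing at finer scales.

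\emph{Part (a).} I would take $\mu\coloneqq\sum_{k\geq0}\big(\delta_{2^{-k}e_1}+\delta_{-2^{-k}e_1}\big)$, which satisfies \eqref{muassumption} since $\int_{\R^d}(|z|^2\wedge1)\dd\mu=2\sum_{k\geq0}4^{-k}=\tfrac83$, while $\mu(\R^d)=\infty$. By Lemma~\ref{lem: zeroOrderLevyOperatorsHaveBoundedSymbols} the multiplier $m$ is then unbounded, and since $m$ is continuous it is bounded on bounded sets, so $\limsup_{|\xi|\to\infty}m(\xi)=\infty$. On the other hand $m(\xi)=2\sum_{k\geq0}\big(1-\cos(2^{-k}\xi_1)\big)$ depends only on $\xi_1$: for $d\geq2$ one has $m(te_d)=0$ for all $t$, so $\liminf_{|\xi|\to\infty}m(\xi)=0$; for $d=1$ one evaluates at $\xi=2^{N+1}\pi$, where $2^{-k}\xi\in 2\pi\mathbb{Z}$ for $k\leq N$ while the remaining terms sum to at most $\sum_{j\geq1}\pi^2 2^{-2j}<\infty$, so $\sup_N m(2^{N+1}\pi)<\infty$ and again $\liminf_{|\xi|\to\infty}m(\xi)<\infty$. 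Thus $\liminf_{|\xi|\to\infty}m(\xi)<\limsup_{|\xi|\to\infty}m(\xi)=\infty$, which exhibits the failure of both assumptions of Theorem~\ref{thm: mainResultExistence}; the stated equivalence follows by unwinding the definition of \eqref{muassumption2} together with Lemmas~\ref{lem: zeroOrderLevyOperatorsHaveBoundedSymbols} and \ref{lem: absolutelyContinuousMeasureHasNiceSymbol}.

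\emph{Part (b).} Here I would use $\mu\coloneqq\sum_{k\geq1}\frac{4^k}{k^2}\sum_{i=1}^d\big(\delta_{2^{-k}e_i}+\delta_{-2^{-k}e_i}\big)$, so that $\int_{\R^d}(|z|^2\wedge1)\dd\mu=2d\sum_{k\geq1}k^{-2}<\infty$ while $\mu(\R^d)=\infty$; its multiplier is $m(\xi)=\sum_{i=1}^d g(\xi_i)$ with $g(s)\coloneqq 2\sum_{k\geq1}\frac{4^k}{k^2}\big(1-\cos(2^{-k}s)\big)$. One first checks $g(s)\to\infty$ as $|s|\to\infty$ by bounding the single term $k=\lfloor\log_2|s|\rfloor$ from below (there $2^{-k}|s|\in[1,2)$, so $1-\cos\geq1-\cos1>0$, giving a term $\gtrsim |s|^2/(\log|s|)^2$); since $g\geq0$ and $\max_i|\xi_i|\geq d^{-1/2}|\xi|$, a pigeonhole extraction then gives $m(\xi)\to\infty$ as $|\xi|\to\infty$, i.e.\ \eqref{muassumption2}. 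Conversely, the truncation $\L_n\coloneqq\L^{\geq1/n}$ keeps exactly the atoms at scales $2^{-k}$ with $k\leq K_n\coloneqq\lfloor\log_2 n\rfloor$, so $\mu_n\coloneqq\mu|_{\{|z|\geq1/n\}}$ is finite (by \eqref{muassumption}) and $m_n(\xi)=2\sum_{k=1}^{K_n}\frac{4^k}{k^2}\sum_{i=1}^d\big(1-\cos(2^{-k}\xi_i)\big)$. Taking $\xi^{(n)}\coloneqq2\pi\,2^{K_n}e_1$ (so $|\xi^{(n)}|\to\infty$) makes $2^{-k}\xi^{(n)}_1\in2\pi\mathbb{Z}$ for $k\leq K_n$ and $\xi^{(n)}_i=0$ for $i\geq2$, hence $m_n(\xi^{(n)})=0$; therefore $\liminf_{|\xi|\to\infty}m_n(\xi)=0$ for every $n$, so $\liminf_{|\xi|,n\to\infty}m_n(\xi)=0$ and the joint coercivity condition \eqref{eq: jointCoercivityConditionInAppendix} (equivalently \eqref{eq: theGeneralCoercivityConditionOfSymbol}) fails.

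For the last assertion of (b), I would set $\phi_n(x)\coloneqq\cos(\xi^{(n)}\cdot x)\,\varphi(x)$ for a fixed radial $0\neq\varphi\in C_c^\infty(\Omega)$ (translating so that $0$ is interior to $\Omega$), exactly as in the ``only if'' direction of the proof of Proposition~\ref{prop: coerciveSymbolEqualsCompactEmbeddingOfLevySpaceInL2}. Then $(\phi_n)_n$ is bounded in $L^2(\Omega)$ with no convergent subsequence (since $|\xi^{(n)}|\to\infty$), and using $\widehat{\phi_n}=\tfrac12\big(\hat\varphi(\cdot+\xi^{(n)})+\hat\varphi(\cdot-\xi^{(n)})\big)$, the identity \eqref{eq: theLevyHilbertNormOnTheForuierSide}, the symmetry of $m_n$ and $\hat\varphi$, and a Taylor expansion of $m_n$ at $\xi^{(n)}$, one obtains $\int_{\R^d}\B^{\geq1/n}[\phi_n,\phi_n]\dd x\leq(2\pi)^{-d}\int_{\R^d}m_n(\xi+\xi^{(n)})|\hat\varphi(\xi)|^2\dd\xi\leq\tfrac{\pi^2}{6}\|\nabla\varphi\|_{L^2(\R^d)}^2$ \emph{uniformly in $n$}: indeed $m_n(\xi^{(n)})=0$, the linear term integrates to $0$ by parity of $|\hat\varphi|^2$, and $m_n$ has a diagonal Hessian with $\big|\partial^2_{\xi_i\xi_i}m_n\big|=\big|2\sum_{k=1}^{K_n}k^{-2}\cos(2^{-k}\xi_i)\big|\leq\tfrac{\pi^2}{3}$ uniformly in $n$ and $\xi$. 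The main obstacle is the tension in (b): the full measure must grow in every direction to satisfy \eqref{muassumption2}, yet all of its truncations must lie on a common dilated lattice to be annihilated by one dyadic frequency. Placing the $2d$ nearest–neighbour atoms $\pm2^{-k}e_i$ at each dyadic scale resolves this — the weights $4^k/k^2$ are large enough to force $m\to\infty$ but small enough to keep $\int(|z|^2\wedge1)\dd\mu<\infty$, and the scale structure forces the atoms of $\mu_n$ onto $2^{-K_n}\mathbb{Z}^d$ — while the uniform Hessian bound on $m_n$ is precisely what lets the last estimate close with a constant independent of $n$.
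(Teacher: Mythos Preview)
Your argument is correct and follows essentially the same route as the paper: purely atomic dyadic Lévy measures together with evaluation of the multiplier at lacunary frequencies $\xi\in 2\pi\cdot 2^{\N}$. The paper restricts to $d=1$ (noting the general case is similar) and uses the simpler weights $1$ in (a) and $2^k$ in (b), while you work directly in $\R^d$ with atoms along all coordinate axes in (b) and weights $4^k/k^2$; both choices yield a uniformly bounded Hessian of $m_n$, which is the key point for the final $(\phi_n)$ construction that the paper only sketches and you carry out in detail. One cosmetic remark: the claim ``$\liminf_{|\xi|\to\infty}m_n(\xi)=0$ for every $n$'' needs a sequence of frequencies, not just the single point $\xi^{(n)}$; but the same computation gives $m_n(\xi^{(N)})=0$ for every $N\geq n$, so this is immediate.
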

\begin{proof}
    For simplicity we restrict our attention to $d=1$, although the following can easily be generalized to $d>1$ (using for example radially symmetric measures). For  part (a),  
    we set 
    \begin{align*}
    \mu = \sum_{k=1}^{\infty}\frac{\delta(x+\tfrac{1}{2^k}) + \delta(x-\tfrac{1}{2^k})}{2} \quad\implies\quad \int_{\R}(|z|^2\wedge 1)\dd\mu(z) = \sum_{k=1}^\infty\frac{1}{4^k}= \frac{1}{3}. 
\end{align*}
Clearly $\mu$ satisfies \eqref{muassumption} and is thus a valid Lévy measure. It is not finite, $\mu(\R)=\infty$, and it fails to satisfy \eqref{muassumption2}: Evaluating the corresponding multiplier $m$ at $\xi = \pi2^n$ with $n\in\N$, we find
\begin{align*}
    m(\pi 2^n) =&\, \int_{\R}\Big(1-\cos(\pi 2^n z)\Big)\dd \mu(z) = \sum_{k=1}^\infty \Big(1-\cos(\pi 2^{n-k})\Big)\\
    =&\,\sum_{k=n}^\infty \Big(1-\cos(\pi 2^{n-k})\Big)\leq 
     \frac12\sum_{\ell=0}^\infty\Big(\frac{\pi}{2^{\ell}}\Big)^2=\frac{2}{3}\pi^2,
\end{align*}
where we used that $\cos(\pi 2^{n-k})=1$ if $k<n$ and the bound $1-\cos(s)\leq \tfrac{1}{2}s^2$.

For  part (b)  
we similarly set
\begin{align*}
    \mu = \sum_{k=1}^{\infty}2^{k}\bigg(\frac{\delta(x+\tfrac{1}{2^k}) + \delta(x-\tfrac{1}{2^k})}{2}\bigg)\quad\implies\quad \int_{\R}(|z|^2\wedge1)\dd\mu(z) = \sum_{k=1}^\infty\frac{1}{2^k} =1,  
\end{align*}
which is also a valid Lévy measure. Moreover $\mu$ satisfies \eqref{muassumption2}: For any  $\xi\in\R\setminus[-2,2]$ pick $n\in \N$ such that $|\xi|/2^n\in [1,2]$ and note that
\begin{align*}
    m(\xi) = \sum_{k=1}^\infty 2^{k}\Big(1-\cos\Big(\frac{\xi}{2^k} \Big)\Big)\geq 2^n\Big(1-\cos(1)\Big),
\end{align*}
where we used that $\cos(s)\leq \cos(1)$ for $s\in [1,2]$.
Thus, $\liminf_{|\xi|\to\infty}m(\xi)=\infty$. The sequence of truncated measures and multipliers are, up to a relabeling after disposing of repeating elements, given by
\begin{align*}
    \mu_n =&\, \sum_{k=1}^{n}2^{k}\bigg(\frac{\delta(x+\tfrac{1}{2^k}) + \delta(x-\tfrac{1}{2^k})}{2}\bigg), &
    m_n(\xi) = &\, \sum_{k=1}^{n}2^{k}\Big(1-\cos\Big(\frac{\xi}{2^k}\Big)\Big).
\end{align*}
Setting $\xi_n \coloneqq \pi 2^{n+1}$ we observe that $m_n(\xi_n)=0$, and so $(m_n)_{n\in\N}$ fail to satisfy \eqref{eq: jointCoercivityConditionInAppendix}. The construction of the mentioned sequence $(\phi_n)_{n\in \N}$ is analogous to the construction from Proposition \ref{prop: coerciveSymbolEqualsCompactEmbeddingOfLevySpaceInL2} and so we skip it. 
\end{proof}


\section{A technical lemma}

\begin{lemma}\label{lem: theAverageAlmostCommuteWithVShapedFunction}
Assume $R>0$, $\kappa$ is a Borel probability measure on $[-R,R]$,  $h\colon [-R,R]\to[0,\infty)$ is Lipschitz with Lipschitz-constant $L$, non-increasing on $[-R,0]$, non-decreasing on $[0,R]$, and $h(0)=0$. The two $\kappa$-means $\ol s\coloneqq \int_{[-R,R]} s\dd \k(s)$ and $\ol{h}\coloneqq \int_{[-R,R]} h(s)\dd \k(s)$ then satisfy 
\begin{align*}
    h(\overline{s})^2\leq LR\overline{h}.
\end{align*}
\end{lemma}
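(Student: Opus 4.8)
The plan is to reduce to the case $\overline{s}\ge 0$ by a reflection and then to exhibit an explicit affine minorant $g\le h$ on $[-R,R]$ with $g(\overline{s})\ge h(\overline{s})^2/(LR)$; Jensen's inequality then finishes the proof, since $\overline{h}=\int h\,\dd\kappa\ge\int g\,\dd\kappa=g(\overline{s})$. Concretely, first I would note that we may assume $\overline{s}\ge 0$: replacing $h$ by $s\mapsto h(-s)$ and $\kappa$ by its pushforward under $s\mapsto -s$ preserves all hypotheses as well as the values $\overline{h}$ and $h(\overline{s})$, while reversing the sign of $\overline{s}$ (note that the two monotonicity assumptions on $h$ get interchanged by this reflection, which is why both are needed). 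We may also assume $c\coloneqq h(\overline{s})>0$, the case $c=0$ being trivial as $\overline{h}\ge 0$. Since $h(0)=0$ and $h$ is $L$-Lipschitz we have $c=h(\overline{s})\le L\overline{s}$; in particular $\overline{s}>0$ and the point $s_0\coloneqq\overline{s}-c/L$ lies in $[0,\overline{s}]$.

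Next I would record the pointwise bound $h\ge\varphi$ on $[-R,R]$, where $\varphi$ is the ``hockey-stick'' function equal to $0$ on $[-R,s_0]$, to the affine ramp $L(s-s_0)$ on $[s_0,\overline{s}]$, and to the constant $c$ on $[\overline{s},R]$: here $h\ge 0$ covers $[-R,s_0]$; the Lipschitz estimate $h(s)\ge h(\overline{s})-L(\overline{s}-s)=L(s-s_0)$ covers $[s_0,\overline{s}]$; and monotonicity of $h$ on $[0,R]$ (using $\overline{s}\ge 0$) gives $h(s)\ge h(\overline{s})=c$ on $[\overline{s},R]$. Then I would take $g$ to be the affine function through $(s_0,0)$ and $(R,c)$, namely $g(s)=c\,(s-s_0)/(R-s_0)$. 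Its slope $c/(R-s_0)$ is at most $L$ --- which is exactly the inequality $\overline{s}\le R$ --- so $g\le\varphi$ on each of the three pieces, hence $g\le\varphi\le h$ on $[-R,R]$. Evaluating at $\overline{s}$ and using $\overline{s}-s_0=c/L$ gives $g(\overline{s})=c^2/\big(L(R-s_0)\big)\ge c^2/(LR)$ because $0\le s_0<R$; combined with Jensen this is the claim.

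The argument involves no real analytic difficulty; the one thing to get right is the choice of $g$ --- the optimal affine lower bound compatible with the three constraints ``$h\ge 0$'', ``$h$ is $L$-Lipschitz near $\overline{s}$'', and ``$h\ge h(\overline{s})$ on $[\overline{s},R]$'' --- after which everything is bookkeeping (verifying the slope condition $c\le L(R-s_0)$, i.e.\ $\overline{s}\le R$, and dispatching the harmless degenerate cases $c=0$, $s_0=0$, $\overline{s}=R$). So I would expect the only genuine ``obstacle'' to be spotting that one should minorise $h$ by an affine function and invoke Jensen, rather than trying to argue more directly from the monotonicity alone; indeed a naive attempt like $h(\overline{s})^2\le h(\overline{s})\,L R\le \overline{h}\,LR$ fails because $h(\overline{s})\le\overline{h}$ is false in general.
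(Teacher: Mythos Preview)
Your proof is correct and takes a genuinely different route from the paper's. The paper argues directly: splitting $\int(h(\overline{s})-h(s))\dd\kappa$ at $\overline{s}$, it bounds the left piece by $L\int_{[-R,\overline{s})}(\overline{s}-s)\dd\kappa$ via Lipschitz, converts this to $L\int_{[\overline{s},R]}(s-\overline{s})\dd\kappa$ via the mean-zero property of $s-\overline{s}$, and then bounds the latter by $(LR/h(\overline{s})-1)\overline{h}$ using $s-\overline{s}\le R-\overline{s}\le R-h(\overline{s})/L$ and $h(s)\ge h(\overline{s})$ on $[\overline{s},R]$. This rearranges to the claim. Your argument instead packages the three pointwise facts (nonnegativity, Lipschitz near $\overline{s}$, monotonicity on $[\overline{s},R]$) into an explicit affine minorant $g$ and applies Jensen's inequality. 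Both proofs use the same ingredients---Lipschitz, monotonicity on the half-interval containing $\overline{s}$, and the constraint $\overline{s}\le R$---but yours makes the mechanism transparent (an optimal affine lower bound), while the paper's is shorter and avoids introducing auxiliary functions. Your approach also makes it clear why the constant $LR$ is sharp: it is forced by the geometry of the chord from $(s_0,0)$ to $(R,c)$.
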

\begin{remark}
This lemma is comparable to (and can be viewed as a generalization of) Lemma 4.5 in \cite{EyGaHe00} which states that for every $c<d$ and  monotone Lipschitz $b$,
$$
|b(d)-b(c)|^2\leq 2L_b\int_{c}^{d}\big(b(x)-b(c)\big)\dd x.
$$
\end{remark}

\begin{proof}[Proof of Lemma \ref{lem: theAverageAlmostCommuteWithVShapedFunction}]
Without loss of generality we assume both $\overline{s}>0$ and $h(\overline{s})>0$. 
Observe first that since $h(\overline{s})\leq h(s)$ for $s\in [\overline{s},R]$, we get
\begin{equation}\label{eq: technicalEquationForTechnicalLemma1}
    \begin{split}
        h(\overline{s})-\overline{h}=&\,\Big(\int_{[-R,\overline{s})}+\int_{[\overline{s},R]}\Big)\big(h(\overline{s})-h(s)\big)\dd \k(s) 
    \leq L\int_{[-R,\overline{s})} (\overline{s}-s)\dd \k(s)+0.
    \end{split}
\end{equation}
Using that $\bar s-s$ has $\kappa$-mean zero and that $h(\overline{s})\leq L\overline{s}$ (since $h(0)=0$), we further compute 
\begin{equation}
    \begin{split}\label{eq: technicalEquationForTechnicalLemma2}
    &L\int_{[-R,\overline{s})}(\overline{s}-s)\dd \k(s)= L\int_{[\overline{s},R]} (s-\overline{s}) \dd \k(s)
    \leq (LR-h(\overline{s}))\int_{[\overline{s},R]}  \dd \k(s)\\
    &\leq \bigg(\frac{LR}{h(\overline{s})}-1\bigg)\int_{[\overline{s},R]} h(s)\dd \k(s)
    \leq \bigg(\frac{LR}{h(\overline{s})}-1\bigg)\overline{h}.
\end{split}
\end{equation}
Combining \eqref{eq: technicalEquationForTechnicalLemma1} with \eqref{eq: technicalEquationForTechnicalLemma2} gives the result.
\end{proof}



\end{document}